\documentclass[12pt]{amsart}

\usepackage{cancel}
\usepackage{latexsym, amssymb, amsmath,esint}
\usepackage{soul}
\usepackage{amsfonts, graphicx}
\usepackage{graphicx,color}

\usepackage{amsmath, amssymb, amsfonts, mathrsfs, mathtools}
\usepackage{latexsym, esint, graphicx, xcolor, color}
\usepackage{wasysym, stmaryrd}
\usepackage{hyperref}
\usepackage[alphabetic]{amsrefs}

\newcommand{\be}{\begin{equation}}
\newcommand{\ee}{\end{equation}}
\newcommand{\beq}{\begin{eqnarray}}
\newcommand{\eeq}{\end{eqnarray}}

\usepackage{wasysym,stmaryrd}
\newtheorem{thm}{Theorem}[section]

\newtheorem{lma}[thm]{Lemma}

\newtheorem{prop}[thm]{Proposition}
\newtheorem{cor}[thm]{Corollary}

\theoremstyle{remark}
\newtheorem{rem}[thm]{Remark}

\numberwithin{equation}{section}

\newtheorem{claim}{Claim}[section]

\def\be{\begin{equation}}
\def\ee{\end{equation}}
\def\bee{\begin{equation*}}
\def\eee{\end{equation*}}

\def\blue{\color{blue}}

\def\K{K\"ahler }
\mathtoolsset{showonlyrefs}

\def\Ric{\text{\rm Ric}}

\def\tr{\operatorname{tr}}

\def\e{\varepsilon}

\def\a{{\alpha}}
\def\b{{\beta}}

\begin{document}

\title[Volume growth of non-negatively curved three-manifolds]{Volume growth and integral curvature bound for non-negatively curved three-manifolds }

\author{Pak-Yeung Chan}
\address[Pak-Yeung Chan]{Department of Mathematics, National Tsing Hua University, Hsin-Chu, Taiwan}
\email{pychan@math.nthu.edu.tw}

\author{Man-Chun Lee}
\address[Man-Chun Lee]{Department of Mathematics, The Chinese University of Hong Kong, Shatin, Hong Kong, China}
\email{mclee@math.cuhk.edu.hk}

\author{Mingxiang Li}
\address[Mingxiang Li]{Department  of Applied Mathematics, The Hong Kong Polytechnic  University,   Hong Kong, China}
\email{mingxiang.li@polyu.edu.hk}

\subjclass[2020]{53C20, 53C21.}
\keywords{three-manifold, volume growth, non-negative curvature}

\date{\today}

\begin{abstract}
Motivated by results in  K\"ahler geometry, in this work, we are interested in understanding the relation between integral curvature bounds and volume growth, under non-negative curvature in dimension three. In case of non-negative sectional curvature, we show that for metric on Euclidean space, it is of Euclidean volume growth if and only if it has average quadratic curvature decay. This is based on showing that metrics on three-dimensional Euclidean space with non-negative sectional curvature is of Euclidean volume growth if its asymptotic scaling invariant integral of scalar curvature is smaller than the sharp constant $8\pi$. We also show a gap Theorem if the curvature decay fast enough in the average sense, under non-negative Ricci curvature. 
\end{abstract}

\maketitle

\section{Introduction}
\label{sec: introduction}

Let $(M^n,g)$ be a complete non-compact manifold with non-negative curvature. The celebrated Theorem  by Cheeger-Gromoll-Meyer \cites{CheegerGromoll1972,GromollMeyer1969} said that if its sectional curvature $\mathrm{sec}(g)$ is non-negative, then $M$ is diffeomorphic to normal bundle of a compact totally convex, totally geodesic embedded sub-manifold $\mathcal{S}$, called the soul of $M$. When $n=3$, under the weaker assumption $\Ric(g)\geq 0$ it was shown by Liu \cite{Liu2013}, building on the work of Schoen-Yau \cite{SchoenYau}, that either $M^3$ is diffeomorphic to $\mathbb{R}^3$ or its universal cover $\tilde M=\Sigma^2\times\mathbb{R}$ splits. The higher dimensional analogy of differentiable structure under curvature conditions weaker than $\mathrm{sec}\geq 0$ is in general unclear. It is also conjectured by Topping \cite{ToppingPIC1} that non-negative $1$-isotropic curvature is the correct way to generalize $\Ric\geq 0$ for such a topological result.

Nevertheless, the differentiable structure under strong non-negativity curvature is generally well-understood. We are interested in the analytic aspect induced from non-negative curvature on non-compact manifolds. The underlying philosophy is that restriction of topological structure should also impose certain analytic constraint, particularly at spatial infinity. When $n=2$, it was first shown by Cohn–Vossen \cite{Cohn} that under $\mathrm{sec}\geq 0$, we must have $||\mathrm{scal}||_{L^1}\leq 4\pi$. In particular, its total curvature is finite. Later, Huber \cite{Huber} generalized this result under the assumption that the negative part of the scalar curvature is integrable, and proved that such open surfaces must be conformally equivalent to a compact surface with finitely many points removed.
In general dimension $n\geq 3$, it was generalized by Petrunin \cite{Petrunin2008} (see also \cite{Li})  
that a complete Riemannian manifold $(M^n,g)$ with $\mathrm{sec}\geq 0$ must satisfy
\begin{equation}\label{pet}
   \sup_{r>0}\;\;  r^{2-n}\int_{B_g(x_0,r)}\mathrm{scal}(g)\,d\mathrm{vol}_{g}\leq C_n,
\end{equation}
for all $x\in M$, where $C_n$ is a dimensional constant. Very recently, it was shown by Hao-Zhu \cite{HaoZhu2026} that the same conclusion cannot hold under $\Ric\geq 0$ and thereby resolving a problem of Yau \cite{Yau1992}, see also the independent work by Cheng \cite{Cheng2026} and Xu \cite{Xu2026}. See Yang's work \cite{Yang} for the counterexample for Yau's conjecture concerning  $k$-th elementary symmetric functions  of $\Ric$ with $k\geq 2$.  Building on the classification theorem \cites{CH,Zhu} for locally conformally flat manifolds with $\Ric\geq 0$, Ma \cite{Ma} showed \eqref{pet} holds in this special setting.  The examples in \cites{Cheng2026,HaoZhu2026,Xu2026} are volume collapsed at infinity, while the volume non-collapsed case is in general still open and is conjectured by Naber \cite{Naber}. We  refer readers to \cites{CucinottaMondino,Deng} for some progress in the non-collapsed case. 

Motivated by these, we are particularly interested in understanding the interplay between volume non-collapsing and integral curvature boundedness, under non-negative curvature. In the K\"ahler case, it has been studied extensively as part of the program in determining the complex structure of complete non-compact K\"ahler manifold with positive bisectional curvature $\mathrm{BK}>0$. In particular, the following is true by the joint effort of Ni, Ni-Tam and Liu: 
\begin{thm}[\cites{Ni2004,NiTam2013,Liu2016,Liu2018}]\label{thm:Kahler}
Suppose $(M^{n},g)$ is a complete non-compact \K manifold such that $\mathrm{dim}_{\mathbb{C}}(M)=n\geq 2$ and $\mathrm{BK}$ is quasi-positive, then the following are equivalent: 
\begin{enumerate}
    \item[(i)] $\displaystyle \mathrm{AVR}(g):=\lim_{r\to+\infty} \frac{\mathrm{Vol}_g(x_0,r)}{\omega_{2n} r^{2n}}>0$;
    \item[(ii)] There is non-trivial holomorphic function $f$ of polynomial growth;
    \item[(iii)] There is $C>0$ such that for all $x\in M$ and $r>0$,
    \begin{equation}\label{eqn:k}
k(x,r):=r^2\fint_{B_g(x,r)} \mathrm{scal}\,d\mathrm{vol}_g \leq C.
    \end{equation}
\end{enumerate}
\end{thm}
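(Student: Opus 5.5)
This is a cited theorem assembled from \cites{Ni2004,NiTam2013,Liu2016,Liu2018}, so the plan is to organise the implications around the known tools rather than prove everything from scratch. I would establish the cycle (i)$\Rightarrow$(iii)$\Rightarrow$(ii)$\Rightarrow$(i). The analytic engine throughout is the Poincaré--Lelong equation. Take $u(x)=\int_M G(x,y)\,\mathrm{scal}(y)\,d\mathrm{vol}_g(y)$, where $G$ is the minimal positive Green's function, which is finite under quadratic decay. One has $\sqrt{-1}\partial\bar\partial u=\Ric$ in the sense of currents. Combined with the Li--Yau heat kernel estimates under $\Ric\ge 0$, this gives $u(x)\approx\int_0^{\infty} k(x,r)\,\frac{dr}{r}$ up to constants. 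Under $\mathrm{BK}\ge0$, I would also use the Li--Yau/Ni monotonicity for plurisubharmonic functions, in the form of the heat flow of $\Ric$ preserving $\mathrm{BK}\ge0$ (Ni--Tam).

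\textbf{(i)$\Rightarrow$(iii).} Assume $\mathrm{AVR}>0$. I would run the Kähler--Ricci flow or the heat flow of $\Ric$ in the sense of Ni--Tam. The key quantity is $k(x,r)$. Euclidean volume growth lets one compare $\mathrm{Vol}\, B(x,r)\ge c r^{2n}$ uniformly. The argument is by contradiction: if $\sup k=\infty$, blow down along a sequence of large averages. Using $\mathrm{BK}\ge0$ plus non-collapsing, Cheeger--Colding cone structure and Liu's three-circle theorem then force the average curvature to be bounded. This is the content of \cite{Liu2018}, which shows Euclidean volume growth implies bounded $k$ via Gromov--Hausdorff tangent cones at infinity and a holomorphic-function argument. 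I would invoke that.

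\textbf{(iii)$\Rightarrow$(ii).} With $k\le C$, Ni's argument solves the $\bar\partial$-problem with weight $e^{-\phi}$, where $\phi$ is built from $\log$ of the distance plus the potential $u$. That potential has only logarithmic growth when $k$ is bounded; more precisely, averaging gives $u$ of at most $C\log r$ growth in mean. Hörmander's $L^2$ estimates on a Stein manifold (quasi-positive BK together with the existence of a strictly psh exhaustion) then produce nonconstant holomorphic functions of polynomial growth.

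\textbf{(ii)$\Rightarrow$(i).} This is Liu's theorem \cite{Liu2016}, and I expect it to be the main obstacle. The step uses the three-circle theorem: polynomial-growth holomorphic functions have finite-dimensional spaces $\mathcal O_d$ with $\dim\mathcal O_d\le C d^n$. A nontrivial $f$ of degree $d$ produces, via $f^k$, a sequence of functions whose frequency lower bound combines with Cheeger--Colding to show tangent cones at infinity cannot collapse. Concretely, one argues that in the collapsed case the degree of $f$ would have to be zero by a gradient estimate on the collapsing scales, contradicting nonconstancy.

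The delicate point is handling collapsing limits under only $\mathrm{BK}\ge0$ (quasi-positive is used to rule out splitting), so I would cite \cite{Liu2016} for it.
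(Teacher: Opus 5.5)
The paper gives no proof of Theorem~\ref{thm:Kahler}. It is quoted from Ni, Ni--Tam and Liu as motivation, so assigning each arrow of your cycle (i)$\Rightarrow$(iii)$\Rightarrow$(ii)$\Rightarrow$(i) to a precise published theorem is all that is needed. The trouble is the mathematics you wrap around the citations. In particular, your step (iii)$\Rightarrow$(ii) fails as written.

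\textbf{The potential is infinite.} The function $u(x)=\int_M G(x,y)\,\mathrm{scal}(y)\,d\mathrm{vol}_g(y)$ is not finite under (iii). By your own Li--Yau comparison, $u(x)\approx\int_0^\infty k(x,r)\,dr/r$. Bishop--Gromov gives $k(x,s)\ge 2^{2-2n}k(x,r)$ for $s\in[r,2r]$, so finiteness of this integral would force $k(x,r)\to0$. Ni's optimal gap theorem would then make $M$ flat, contradicting quasi-positivity of $\mathrm{BK}$. Hence $u\equiv+\infty$ in exactly your setting, and your later claim that $u$ grows like $C\log r$ contradicts your own asymptotic formula. Moreover, $G$ need not exist: (iii) only yields quadratic lower volume growth, which does not give non-parabolicity. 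You need the standard renormalisation (e.g.\ replacing $G(x,y)$ by $G(x,y)-G(x_0,y)$ for $y$ outside a fixed ball, together with non-parabolicity), or Ni--Tam's heat-flow solution of the Poincar\'e--Lelong equation.

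\textbf{The passage to holomorphic functions is only asserted.} Even with a correct log-growth potential, $\sqrt{-1}\partial\bar\partial u=\Ric$ is merely quasi-positive and degenerates at infinity. You also only have an upper bound on $u$. So $u$ is not known to be strictly plurisubharmonic or an exhaustion, and neither Steinness nor the positivity that H\"ormander's estimate with weight $e^{-mu}$ requires follows. The classical implementations of this scheme (Mok--Siu--Yau, Mok) assume maximal volume growth, which is exactly what you do not have when starting from (iii).

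\textbf{Your sketch of (ii)$\Rightarrow$(i) proves too much.} Take $M=\mathbb{C}\times\Sigma$ with $\Sigma$ Hamilton's cigar. Then $\mathrm{BK}\ge0$, the coordinate $z$ is a holomorphic function of linear growth, and the three-circle theorem and the bound on $\dim\mathcal{O}_d$ hold. Powers $z^k$ behave exactly as on $\mathbb{C}$, yet $\mathrm{Vol}_g(x_0,r)\sim r^3$ in real dimension four. So nothing in your three-circle, dimension-estimate and Cheeger--Colding outline forces the degree to vanish on collapsing scales. Quasi-positivity (non-splitting) must be the decisive input, not a parenthetical remark, and your sketch never says where it enters.

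\textbf{Attributions to check.} Make sure the result you cite for (ii)$\Rightarrow$(i) really proves that direction under quasi-positive $\mathrm{BK}$; Liu's existence theorem under maximal volume growth is the opposite arrow, (i)$\Rightarrow$(ii). Also check that the result you cite for (i)$\Rightarrow$(iii) gives the bound uniformly in $x\in M$, as the statement requires.

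As it stands, your proposal is acceptable only as a list of citations, which is what the paper itself offers. The arguments you sketch around them would not establish the implications.
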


It naturally raise the question of whether (i) and (iii) in Theorem~\ref{thm:Kahler} are equivalent in any (purely) Riemannian content. The study is particularly active when $n=3$ under $\Ric\geq 0$. For instances, Xu \cite{Xu2024} proved that any complete manifold $(M^3,g)$ with $\Ric\ge 0$ and a pole must satisfy the sharp inequality
\begin{equation}\label{eqn:R-XU}
\lim_{r\to+\infty}\frac1r\int_{B_g(x_0,r)}\mathrm{scal}(g)\,d\mathrm{vol}_{g}=8\pi(1-\text{AVR(g)}),    
\end{equation}
which generalizes a previous estimate by Zhu \cite{ Zhu2022}. Assuming Euclidean volume growth but not the existence of pole, a version of \eqref{eqn:R-XU} weighted by the Green function, was recently obtained by Chen-Xu-Zhang \cite{CGZ2026}, building on the earlier work of Xu \cite{Xu2020}. We also refer interested readers to \cites{MW2025,Zhu2022a} for results toward generalizing Petrunin's estimate \cite{Petrunin2008}, as well as to \cite{Reiris2015} for a result establishing $\mathrm{AVR}(g)>0$ under pointwise quadratic curvature decay.

The first theme of this work is to  understand how (iii) implies (i), under $\mathrm{sec}\geq 0$. Our first main result is a volume gap Theorem of Petrunin's curvature estimate \cite{Petrunin2008}:
\begin{thm}\label{thm:main}
Suppose $g$ is a complete metric on $\mathbb{R}^3$ such that the sectional curvature $\mathrm{sec}(g)$ is non-negative. If there exists $x_0\in \mathbb{R}^3$ such that 
\begin{equation}
\a:=\limsup_{r\to+\infty}\frac1r\int_{B_g(x_0,r)}\mathrm{scal}(g)\,d\mathrm{vol}_{g}<6\omega_3:=8\pi
\end{equation}
then $(\mathbb{R}^3,g)$ is of Euclidean volume with 
\begin{equation}
    \mathrm{AVR}(g)=   1-\frac \a{8\pi}.
\end{equation}
\end{thm}

It is worth mentioning that the constant $8\pi$ is sharp even on $\mathbb{R}^3$. The rotationally symmetric metric with an asymptotically cylindrical end,
\begin{equation}\label{eqn:rot-eg}
    g:=dr^2+\tanh^2(r)g_{\mathbb{S}^2}
\end{equation}
 has $\mathrm{sec}\geq 0$ and $\mathrm{AVR}(g)=0$, whereas $\a=8\pi$. This in particular gives a three-dimensional analogy to Theorem~\ref{thm:Kahler} under $\mathrm{sec}\geq 0$:
\begin{cor}\label{cor:vol-decay-dich}
Suppose $(M^3,g)$ is a complete non-compact non-flat manifold such that $\mathrm{sec}\geq 0$ and  $\Ric(x_0)>0$ for some $x_0\in M$, then the following are equivalent: 
\begin{enumerate}
    \item[(i)] $\mathrm{AVR}(g)>0$;
      \item[(ii)] $\displaystyle \limsup_{r\to+\infty} r^2\fint_{B_g(x,r)} \mathrm{scal}(g)\,d\mathrm{vol}_g <+\infty$, for all $x\in M$.
\end{enumerate}
\end{cor}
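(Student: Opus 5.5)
We prove the two implications separately. The first step is topological: by Cheeger--Gromoll--Meyer, $M$ is diffeomorphic to the normal bundle of its soul $\mathcal S$. Since $\Ric(x_0)>0$ at some point, $M$ cannot split off a line, even locally in its universal cover, through every point. By the soul theorem together with Perelman's resolution of the soul conjecture (positivity of curvature at one point forces the soul to be a point), $\mathcal S$ is a point, so $M$ is diffeomorphic to $\mathbb{R}^3$. Thus we may regard $g$ as a complete metric on $\mathbb{R}^3$ with $\mathrm{sec}\ge 0$. This places us exactly in the setting of Theorem~\ref{thm:main}.

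\textbf{(ii)$\Rightarrow$(i).} Fix $x_0$ and write $V(r)=\mathrm{Vol}_g(B_g(x_0,r))$. Hypothesis (ii) says $\int_{B_g(x_0,r)}\mathrm{scal}\le C\,V(r)/r^2$ for large $r$. Suppose for contradiction that $\mathrm{AVR}(g)=0$, so $V(r)=o(r^3)$. Then
\[
\frac1r\int_{B_g(x_0,r)}\mathrm{scal}\,d\mathrm{vol}_g\le C\,\frac{V(r)}{r^3}\longrightarrow 0 ,
\]
hence $\a=0<8\pi$. Theorem~\ref{thm:main} then gives $\mathrm{AVR}(g)=1-\a/(8\pi)=1$, contradicting $\mathrm{AVR}(g)=0$. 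So $\mathrm{AVR}(g)>0$.

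(Alternatively, one can argue directly without the contradiction. Bishop--Gromov gives $V(r)\le \omega_3 r^3$, so $\a\le C\omega_3$ only, which is not enough. The contradiction argument is therefore the cleaner route.)

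\textbf{(i)$\Rightarrow$(ii).} Assume $\mathrm{AVR}(g)=v>0$. For any $x\in M$, Bishop--Gromov gives $V_x(r)\ge v\,\omega_3 r^3$. Petrunin's estimate \eqref{pet} with $n=3$ gives $\int_{B_g(x,r)}\mathrm{scal}\le C_3 r$ for all $r>0$. Therefore
\[
r^2\fint_{B_g(x,r)}\mathrm{scal}\,d\mathrm{vol}_g\le \frac{C_3 r^3}{v\,\omega_3 r^3}=\frac{C_3}{v\,\omega_3},
\]
uniformly in $x$ and $r$. This gives (ii).

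\textbf{Main obstacle.} The only delicate point is the reduction to $\mathbb{R}^3$ in the first step. Here I would invoke the soul theorem with positivity at a point; the non-flat assumption and $\Ric(x_0)>0$ are used to exclude souls of dimension one and two. If Perelman's theorem is not desired, one can instead use the splitting structure in dimension three: a nontrivial soul of dimension one or two would force the universal cover to split off a line, contradicting $\Ric(x_0)>0$.
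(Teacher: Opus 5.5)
Your proof is correct and is essentially the paper's. For (i)$\Rightarrow$(ii) you use Petrunin's estimate together with the Bishop--Gromov lower bound $\mathrm{Vol}_g(x,r)\geq \mathrm{AVR}(g)\,\omega_3 r^3$. For (ii)$\Rightarrow$(i) you show that $\mathrm{AVR}(g)=0$ would force $\a=0$, contradicting Theorem~\ref{thm:main}, after first reducing to $M\cong\mathbb{R}^3$ (the paper cites Schoen--Yau for this reduction).

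One correction in that reduction. Perelman's soul theorem needs all sectional curvatures positive at a point. $\Ric(x_0)>0$ with $\mathrm{sec}\geq 0$ does not give this in dimension three: Ricci eigenvalues $1,1,2$ give a plane of zero sectional curvature. So it is your fallback argument that actually does the job: a soul of dimension one or two makes a cover contain a line, hence split, so a Ricci eigenvalue vanishes everywhere, contradicting $\Ric(x_0)>0$.
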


Theorem~\ref{thm:main} is in other word saying that the curvature of a volume collapsed manifold $(\mathbb{R}^3,g)$ with $\mathrm{sec}\geq 0$ must be at least $8\pi$ in integral sense asymptotically, i.e. $\a\geq 8\pi$. Although the geometric quantity $\a$ is detecting non-flat at spatial infinity, it is at the same time influenced by local geometry. More generally, we show that the integral curvature grows linearly. The same also holds in general dimension $n\geq 3$: 
\begin{thm}\label{thm:scalar-growth}
Suppose $(M^n,g)$ is a complete non-compact manifold such that $\mathrm{sec}(g)\geq 0$ and $\mathrm{rank}(\Ric(x_0))\geq 3$ for some $x_0\in M$, then there exists $c_1>0$ such that for all $r>1$,
\begin{equation}
\frac1r    \int_{B_g(x_0,r)}\mathrm{scal}(g)\,d\mathrm{vol}_g \geq c_1.
\end{equation}
In particular, the total scalar curvature is infinite. 
\end{thm}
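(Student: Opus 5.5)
The plan is to show that, under $\mathrm{sec}\geq 0$, a fixed amount of curvature near $x_0$ forces a definite amount of curvature along every long geodesic segment emanating from near $x_0$. Integrating this over an annulus then gives linear growth.

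\textbf{Step 1: local positivity.} Since $\mathrm{rank}(\Ric(x_0))\ge 3$, continuity gives a neighbourhood $B_g(x_0,\delta)$ on which $\Ric$ has at least three eigenvalues $\ge \lambda>0$. Hence, for any unit vector $v$ at a point $y\in B_g(x_0,\delta)$, the orthogonal complement $v^\perp$ contains a $2$-plane $\sigma$ with $\mathrm{sec}(\sigma)\ge c(\lambda,n)>0$. Concretely: the three positive Ricci directions span a $3$-space $W$, and $W\cap v^\perp$ has dimension at least $2$. Using $\mathrm{sec}\ge0$ one expands $\Ric(w,w)$ for $w\in W\cap v^\perp$ as a sum of sectional curvatures. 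This step needs care, and is where rank $3$ (rather than rank $2$) is used. It is precisely what excludes the splitting examples $\Sigma^2\times\mathbb{R}^{n-2}$, where $\mathrm{rank}\,\Ric=2$ and the curvature integral is bounded.

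\textbf{Step 2: transport curvature to infinity.} For $y$ on the sphere $\partial B_g(x_0,s)$ with $s$ large, take a minimizing geodesic $\gamma$ from $y$ back to $x_0$. I plan to use the fact that, for nonnegative sectional curvature, the scalar curvature seen from far away cannot vanish identically along all directions. Concretely, consider Jacobi fields along $\gamma$ and apply a Rauch/second variation comparison. If $\mathrm{scal}$ were tiny on $B_g(y,s/2)$ for many $y$, then the distance function would be almost a splitting there. By Toponogov comparison, the triangles based at $x_0$ would then be nearly flat, and this contradicts the definite curvature $c$ on the planes at $x_0$.

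A cleaner route, which I would try first, is to use the Petrunin-type quantity. I would let $f(r)=\int_{B_g(x_0,r)}\mathrm{scal}$ and aim for a lower bound on the derivative, $f'(r)=\int_{\partial B_g(x_0,r)}\mathrm{scal}\ge c_1$. For this I would use Gauss--Bonnet applied to the level sets of a convex (Busemann/Cheeger--Gromoll) exhaustion function. Such a level set $\Sigma_r$ is a convex hypersurface. By the Gauss equation, its intrinsic scalar curvature satisfies $\mathrm{scal}_{\Sigma_r}\le \mathrm{scal}_M - 2\Ric(\nu,\nu)+\text{(second fundamental form terms)}$. In dimension $3$, Gauss--Bonnet gives $\int_{\Sigma}K_\Sigma=4\pi$. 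The curvature at $x_0$ then prevents the ambient directions from becoming asymptotically flat.

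\textbf{Step 3: integrate.} Having $\int_{B_g(x_0,r)\setminus B_g(x_0,r/2)}\mathrm{scal}\ge c\, r$ for $r>r_0$, I will handle $1<r\le r_0$ by the positivity of $\mathrm{scal}$ near $x_0$ and adjust $c_1$. Summing dyadically then gives infiniteness of the total scalar curvature.

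The main obstacle is Step 2: producing a \emph{linear} lower bound on curvature in annuli from curvature concentrated at a single point. I would first try a scaling/contradiction argument. If $r_i^{-1}\int_{B_g(x_0,r_i)}\mathrm{scal}\to0$, rescale by $r_i^{-1}$ and pass to a tangent cone at infinity. In the rescaled metrics, scalar curvature is $L^1$-small on unit balls. Combining this with Petrunin's argument, the limit cone would be flat and split lines through the vertex. Cheeger--Gromoll then forces the original metric to split off enough lines to contradict $\mathrm{rank}\,\Ric(x_0)\ge3$.
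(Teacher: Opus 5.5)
There is a genuine gap. The proposal never gives a mechanism that turns curvature near $x_0$ into a \emph{linear} lower bound, and its one concrete algebraic claim is false.

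\textbf{Step 1 is false.} Write $T:=\frac12\mathrm{scal}\,g-\Ric$. Then $T(v,v)$ is the sum of the sectional curvatures of the planes in $v^\perp$, so Step 1 asserts that $T>0$ near $x_0$. Rank $\ge 3$ does not give this. In dimension three, take the curvature operator diagonal in $e_2\wedge e_3,\,e_3\wedge e_1,\,e_1\wedge e_2$ with eigenvalues $0,b,a$, where $a,b>0$. Then $\mathrm{sec}\ge 0$ and $\Ric=\mathrm{diag}(a+b,a,b)$ has rank $3$, yet $e_1^\perp$ is flat. In your expansion of $\Ric(w,w)$, the term $\mathrm{sec}(w,v)$ can carry everything. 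What rank $\ge 3$ actually gives is $\dim\ker T\le 1$.

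Your remark about the excluded examples is also off. $\Sigma^2\times\mathbb{R}^{n-2}$ has $\int_{B_r}\mathrm{scal}\sim r^{n-2}$, so it is no obstruction. The examples with finite total curvature are $\Sigma^2\times\mathbb{T}^{n-2}$.

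\textbf{Step 2 is only a list of options, none completed.}
\begin{itemize}
\item Smallness of $r^{-1}\int_{B_r}\mathrm{scal}$ gives no pointwise smallness, so the Toponogov ``near-splitting'' has no input.
\item Gauss--Bonnet on level sets is special to $n=3$, while the theorem is for all $n$. Even for $n=3$, the Gauss equation gives only $\int_\Sigma T(\nu,\nu)=4\pi-\int_\Sigma\det A$. You would need $\int_\Sigma\det A$ to stay a definite amount below $4\pi$, which is a volume-deficit statement you do not have. Also, $\partial B_r$ is not a level set of $b$.
\item The blow-down argument breaks in the collapsed case. There the asymptotic cone can be a ray, and rescaled $L^1$-smallness says nothing about the normalized quantity $r^2\fint_{B_r}\mathrm{scal}$. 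A flat cone need not split. A line in the asymptotic cone does not produce a line in $M$, which is what Cheeger--Gromoll requires.
\end{itemize}

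\textbf{The missing idea is a conservation law.} By the contracted Bianchi identity $T$ is divergence free, and $T\ge 0$ under $\mathrm{sec}\ge 0$. Pair it with the convex Busemann function:
\begin{itemize}
\item Smooth $b$ via Greene--Wu to $b_i$ with $|\nabla b_i|\le 1$ and $\nabla^2 b_i\ge -\e_i g$.
\item Set $f_i=\phi(b_i)$ with $\phi(s)=s+\sqrt{1+s^2}$, which satisfies $0<\phi'\le 2$ and $\phi''>0$, and set $X_i=T(\nabla f_i)$.
\end{itemize}
Then $\mathrm{div}X_i=\langle T,\nabla^2 f_i\rangle\ge -o(1)$ locally, while $|X_i|\le 2\,\mathrm{scal}$. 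Integrate against a cutoff $\varphi$ equal to $1$ on $B_g(x_0,r)$ with $|\nabla\varphi|\le 2/r$. This gives $c_0:=\liminf_i\int_{B_g(x_0,1)}\mathrm{div}X_i\le\frac4r\int_{B_g(x_0,2r)\setminus B_g(x_0,r)}\mathrm{scal}$. A fixed source near $x_0$ must be balanced by flux at distance $r$. This is exactly the annular bound you postulate in Step 3, and it holds in every dimension.

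Rank $\ge 3$ is used only to show $c_0>0$. Suppose $c_0=0$. Then $T(\nabla b,\nabla b)=0$ a.e., so near a generic point $\nabla b$ spans $\ker T$; here $\dim\ker T\le 1$ is used. Next, $\langle T,\nabla^2 b\rangle=0$ forces $\nabla^2 b=\lambda\,\nabla b\otimes\nabla b$, and $|\nabla b|=1$ forces $\lambda=0$. So $\nabla b$ is parallel and $\Ric(\nabla b,\nabla b)=0$. This contradicts $\Ric(\nabla b,\nabla b)=\frac12\mathrm{scal}>0$.
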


In fact, the constant $c_1$ in Theorem~\ref{thm:scalar-growth} can be explicitly controlled by curvature positivity at $x_0$, see Proposition~\ref{prop:c_1-local}. Moreover, the linear growth is sharp under $\mathrm{sec}>0$. This can be seen from the example above, i.e. the rotationally symmetric metric $g$ in \eqref{eqn:rot-eg} with $\mathbb{S}^2$ replaced by $\mathbb{S}^{n-1}$ for general $n\geq 3$. 

The second theme of this work is to explore the gap phenomenon of three-manifolds with $\Ric\geq 0$ and vanishing asymptotic curvature, i.e. $ k(x_0,r)$ in \eqref{eqn:k} is $o(1)$ as $r\to+\infty$. Despite the curvature positivity, this is the extreme case appeared in Corollary~\ref{cor:vol-decay-dich} where the geometric quantity in (ii) vanishes. Gap Theorem of this kind in the Riemannian content was studied by the first two named authors \cite{ChanLee2025}. The purpose there is to develop a Riemannian analogy of the optimal gap Theorem of Ni \cite{Ni2012} (see also \cite{NiNiu2020}) asserting that a complete non-compact \K manifold $(M^n,g)$ with $\mathrm{BK}\geq 0$ is flat if $ k(x_0,r)=0$ as $r\to 0$ for some $x_0\in M$. Our motivation is to explore the Riemannian counterpart of the Theorem of Ni \cite{Ni2012}, with the weakest possible curvature condition. In \cite{ChanLee2025}, the first two named authors used the Ricci flow approach to answer it in the non-collapsed case under non-negative $1$-isotropic curvature assumption\footnote{The combined methodologies in \cites{ChanLee2025,ChanLeePeachey} also implies the gap Theorem for $n=3$ in the non-collapsed case.}. And thus the remaining challenge is to treat the collapsing situation. When $n=3$, we exploit the potential theory of $p$-harmonic function and prove the optimal gap Theorem: 

\begin{thm}\label{thm:gap}
    Suppose $(M^3,g)$ is a complete non-compact manifold such that $\Ric\geq 0$ and there exists $x_0\in M$ such that 
    \begin{equation}\label{kto0}
        \limsup_{r\to+\infty}r^2\fint_{B_g(x_0,r)}\mathrm{scal}(g)\,d\mathrm{vol}_{g}=0,
    \end{equation}
    then $(M^3,g)$ is flat.
\end{thm}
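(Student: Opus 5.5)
We reduce Theorem~\ref{thm:gap} to a monotonicity formula for level sets of $p$-harmonic (or harmonic) Green-type functions on a three-manifold with $\Ric\ge 0$. As a first step we dispose of the split case. By Liu's theorem \cite{Liu2013}, either $M$ is diffeomorphic to $\mathbb{R}^3$ or the universal cover splits as $\Sigma^2\times\mathbb{R}$ with $\Sigma$ of non-negative curvature. In the split case $\mathrm{scal}$ is twice the Gauss curvature $K$ of $\Sigma$. The volume of $B_g(x_0,r)$ is comparable to $r\cdot \mathrm{Area}(B_\Sigma(r))$ at most. The hypothesis \eqref{kto0} then says that $r^2\fint_{B_\Sigma(r)}K\to 0$, after an averaging argument along the line factor (lifted to the cover, using the volume doubling guaranteed by $\Ric\ge0$). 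On a surface with $K\ge 0$, Cohn--Vossen gives $\int_\Sigma K\le 2\pi$. Combining this with the Bishop--Gromov bound $\mathrm{Area}(B_\Sigma(r))\le \pi r^2$, we get $r^2\fint K\ge \int_{B(r)}K/\pi$. This forces $\int_\Sigma K=0$, so $\Sigma$ is flat and so is $M$. A compact quotient needs a slightly different bookkeeping, but the conclusion is the same.

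The main case is $M\cong\mathbb{R}^3$, where the collapsing situation is the real issue; the non-collapsed case is already covered by \cite{ChanLee2025}. Here I would fix a $p\in(1,3)$ and, when $M$ is $p$-nonparabolic, take the $p$-Green function $G_p$ with pole at $x_0$. Otherwise I would use a $p$-harmonic exhaustion $u$ on annuli $B(x_0,R)\setminus \{x_0\}$ with $u=0$ on $\partial B(x_0,R)$ and let $R\to\infty$. Write $b$ for the associated distance-like function. The key tool is the Stern / Munteanu--Wang type Bochner identity on level sets $\Sigma_t=\{u=t\}$. Via Gauss's equation, it expresses $\Delta_p$-type quantities in terms of $\mathrm{scal}$, the Gauss curvature of $\Sigma_t$, and $|\nabla^2 u|^2$-terms. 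After integration it yields an inequality of the form
\[
\int_{\{t_1<u<t_2\}} \Big(\tfrac{|\nabla^2 u|^2}{|\nabla u|}+\mathrm{scal}\,|\nabla u|\Big)\,d\mathrm{vol}_g
\lesssim 4\pi\int_{t_1}^{t_2}\chi(\Sigma_t)\,dt + \text{boundary terms}.
\]
On $\mathbb{R}^3$ regular level sets of the Green function are spheres, so $\chi(\Sigma_t)\le 2$. Read in the other direction, the same identity shows that the defect from a Euclidean monotone quantity is controlled by $\int \mathrm{scal}\,|\nabla u|$. We then use gradient estimates for $p$-harmonic functions under $\Ric\ge0$ (Cheng--Yau/Kotschwar--Ni type, $|\nabla u|\lesssim u/r$ on the scale $r$) to bound $\int_{B(x_0,r)}\mathrm{scal}\,|\nabla u|$ by $r^{-1}\cdot\frac{\mathrm{Vol}(B(x_0,r))}{r^2}\cdot k(x_0,r)\cdot(\text{scale factor of }u)$. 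By \eqref{kto0} this is $o(1)$ relative to the natural normalization. Since the topological term is fixed, the monotone quantity must be asymptotically saturated at all scales. Passing to the limit and using rigidity in the equality case of the Bochner inequality, we get $\nabla^2 u$ proportional to the Euclidean model and $\mathrm{scal}\equiv 0$. With $\Ric\ge0$, vanishing scalar curvature forces $\Ric\equiv0$, hence flatness in dimension three.

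The main obstacle is the collapsed normalization. In the collapsed case $\mathrm{Vol}(B(x_0,r))=o(r^3)$, so the Green function may fail to exist or scale non-Euclideanly, and the comparison between $\int\mathrm{scal}\,|\nabla u|$ and $r^2\fint\mathrm{scal}$ must be made with the correct scaling of $u$. I would first try to handle this by working with $p$ close to $1$, where the $p$-capacity of balls scales like area rather than volume. I would combine this with the lower bound $\mathrm{Vol}(B(x_0,r))\ge c\,r$ valid under $\Ric\ge0$ (Yau). The aim is to show that the weighted curvature integral is bounded by $k(x_0,r)$ times a quantity which, in the $p\to1$ limit, converges to an isoperimetric/Willmore-type term bounded independently of $r$. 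Then \eqref{kto0} gives $\int_M \mathrm{scal}\,|\nabla u|=0$, and hence $\mathrm{scal}\equiv0$ on the open set where $\nabla u\ne0$, which is dense by unique continuation.
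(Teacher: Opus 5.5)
\textbf{There is a genuine gap: the collapsed case, which is the whole content of the theorem, is left unproved.}

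Your split-case reduction via Liu's theorem is plausible, though sketchy for non-product quotients. It is also unnecessary: the paper never uses topology. Quoting \cite{ChanLee2025} for the non-collapsed case is acceptable. The paper instead uses Proposition~\ref{prop:MV-inte-R}: $r^{-1}\int_{B_g(x_0,r)}\mathrm{scal}=k(x_0,r)\,\mathrm{Vol}_g(x_0,r)/r^3\to0$ forces $\mathrm{AVR}(g)=1$. For $\mathrm{AVR}(g)=0$ you only state an obstacle and a speculative plan, and the plan does not work as written.
\begin{itemize}
\item The ``saturation plus rigidity'' step has no Euclidean model to saturate when $\mathrm{AVR}(g)=0$.
\item The bound you aim for, $\int_{B_g(x_0,r)}\mathrm{scal}\,|\nabla u|\le k(x_0,r)\,Q$ with $Q$ independent of $r$, is essentially a restatement of the conclusion. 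Its left side is non-decreasing in $r$ and positive if $M$ is not flat. You give no mechanism producing it.
\item Yau's linear volume bound is the wrong input. Linear growth is compatible with $p$-parabolicity for every $p>1$.
\item Unique continuation is not available for $p$-harmonic functions with $p\neq2$. So $\mathrm{scal}\,|\nabla u|\equiv0$ would not give $\mathrm{scal}\equiv0$.
\item Level sets need not be spheres. Only $\int_{\Sigma_t}\mathrm{scal}(g_{\Sigma_t})\in8\pi\mathbb{Z}$ is actually needed.
\end{itemize}

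\textbf{The missing idea.} Convert \eqref{kto0} into a volume statement, and play it against a volume lower bound coming from monotonicity.
\begin{itemize}
\item If $M$ is not flat, then $\int_{B_g(x_0,R)}\mathrm{scal}>0$ for some $R$. For $r\ge R$, $\mathrm{Vol}_g(x_0,r)\ge k(x_0,r)^{-1}r^2\int_{B_g(x_0,R)}\mathrm{scal}$. Hence $m(r):=\inf_{s\ge r}\mathrm{Vol}_g(x_0,s)/s^2\to\infty$.
\item This growth gives $4/3$-nonparabolicity. It also gives decay of the capacitary potential $u$ of a small ball $B_g(x_0,\sigma)$, chosen with $\int_{\partial B_g(x_0,\sigma)}H^2<16\pi$: $u\lesssim\bigl(\int_\rho^\infty\mathrm{Vol}_g(x_0,s)^{-3/4}ds\bigr)^4\lesssim m(\rho)^{-3}\rho^{-2}$.
\item Use the monotone quantities $F,G$ of \cite{BQOP2024} along level sets of $w=-\frac13\log u$. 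Combine them with the Gauss--Bonnet dichotomy ($\int_{\Sigma_t}\mathrm{scal}(g_{\Sigma_t})\le0$ or $\ge8\pi$) and with $F(0)<4\pi$. This yields $\mathrm{Vol}_g(\Omega_T)\ge c\exp\bigl[3T-C\int\mathrm{scal}\,|\nabla w|\bigr]$.
\item The gradient bound $|\nabla w|\le C/\rho$ turns the error into $\int\mathrm{scal}/\rho$. Using $\mathrm{AVR}=0$ and bounded $k$, this is only $o(\log m(r))$, not bounded by $k$ times a constant.
\item Choose $T$ with $e^{3T}\approx r^2m(r)^3$, so that $\Omega_T\subset B_g(x_0,r)$, at radii where $\mathrm{Vol}_g(x_0,r)=r^2m(r)$. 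This gives $r^2m(r)\ge c\,r^2m(r)^{3-o(1)}$, contradicting $m\to\infty$.
\end{itemize}
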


Most results of this work relies on three dimensionality for the purpose of applying Gauss-Bonnet Theorem on various kind of level sets. In the first part, we study $(M^n,g)$ with $\mathrm{sec}\geq 0$ by exploiting the Busemann function $b:M\to\mathbb{R}$ and its properties from \cite{CheegerGromoll1972}. It is a $1$-Lipschitz function which is proper and convex, if $\mathrm{sec}\geq 0$. To address the volume growth in Theorem~\ref{thm:main}, we consider the sub-level set $C_\ell$ of the Busemann function $b$. Our topological assumption $M^3=\mathbb{R}^3$ ensure that the soul $\mathcal{S}$ of $M$ is a point so that $C_\ell$ are contractible. A very classical work \cite{Bujalo1978} of Bujalo, based on Gauss-Bonnet, asserts that the total scalar curvature on $C_\ell$ and the total (generalized) mean curvature on $\partial C_\ell$ can be controlled by the model on $\mathbb{R}^3$ sharply. This in turn control the volume growth of $M$ by variational formula of $\mathrm{Vol}_g(C_\ell)$ and distance comparison, modulus the error from total curvature decay $\a$. 

When we relax $\mathrm{sec}\geq 0$ to $\Ric\geq 0$, the busemann function $b$ is only sub-harmonic in general. Moreover when $n\geq 4$, it is in general not proper by the examples \cite{PanWei} of Pan-Wei, while the case $n=3$ is still open. We consider an alternative choice of exhaustion using Green type function. Under $\Ric\geq 0$, the properness of Green type function is tightened with its volume growth. In the setting of Theorem~\ref{thm:gap}, assume on the contrary $(M^3,g)$ is non-flat, the volume growth must be super-quadratic, thanks $\lim_{r\to+\infty} k(x_0,r)=o(1)$. For some well-chosen $p\in (1,3]$, this in particular implies $(M^3,g)$ is $p$-nonparabolic and its minimal $p$-Green function is proper. In reality, we consider instead $p$-capacitary function $u$ on some exterior domain $M\setminus B_g(x_0,\delta)$ and use its level set to exhaust $M\setminus B_g(x_0,\delta)$. Exploring the monotonicity developed by \cite{BQOP2024} with the help of gradient estimate of $p$-harmonic type function, we are able to show that $M$ must be of quadratic volume growth, contradicting to super-quadratic in the first place. The circle of idea also extends to a discussion of volume growth, in a similar spirit as Theorem~\ref{thm:main}. For a more general statement, we refer readers to Theorem~\ref{thm:vol-growth}.

\subsection*{Acknowledgments}
P.-Y. Chan is supported by the Yushan Young Fellow Program of the Ministry of Education (MOE) Taiwan (MOE-108-YSFMS-0004-012-P1), and by NSTC grants 113-2115-M-007 -014 -MY2 and 115-2628-M-007 -006. M.-C. Lee is supported by Hong Kong RGC grants No. 14300623 and No. 14304225, and an Asian Young Scientist Fellowship.

\subsection*{Disclosure on AI assistance.}
The idea of the proofs in this paper are due to ChatGPT 6.0 Pro, and the paper is an exposition of its output. The proofs has been verified and re-written by the authors and they take full responsibility for any errors.

\section{Properties of Busemann function } 

In this section, we collect some preliminary about Busemann function on manifolds with $\mathrm{sec}\geq 0$ which was studied extensively by Cheeger-Gromoll-Meyer \cites{CheegerGromoll1972,GromollMeyer1969}. Fix $x_0\in M$ and denote $\rho(x)=d_g(x_0,x)$.  We consider the set $  \mathcal{R}$ consisting of all rays $\gamma:[0,+\infty)\to M$ with $\gamma(0)=x_0$. The Busemann function $b:M\to \mathbb{R}$ is defined to be
\begin{equation}
    b(x):=\sup_{\gamma\in \mathcal{R}}\left\{  \lim_{s\to+\infty} \left(s-d_g(\gamma(s),x) \right)\right\}.
\end{equation}

By the celebrated work of  Cheeger-Gromoll-Meyer \cites{CheegerGromoll1972,GromollMeyer1969}, it is  known that $b$ is a convex function on $M$ and thus $\inf_M b<+\infty$. By translation, we will assume $\inf_M b=0$. We  consider the exhaustion of $M$ by its sub-level set. For all $\ell>0$, denote 
\begin{equation}
    C_\ell:=\{ x\in M: b(x)\leq \ell\}\subseteq M.
\end{equation}

The following properties will be important for us.
\begin{lma}\label{lma:property-buse}
For all $\ell>0$, we have 
\begin{enumerate}
    \item[(i)] Each $C_\ell$ is a totally convex compact set.
    \item[(ii)] $\mathrm{dim}(C_\ell)=n$ and $\cup_{\ell>0} C_\ell=M$.
    \item[(iii)] $C_\ell$ has the structure of an embedded sub-manifold of $M$ with smooth
totally geodesic interior and (possibly non-smooth) boundary.
\end{enumerate}
\end{lma}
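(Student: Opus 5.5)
The plan is to derive all three properties from the Cheeger--Gromoll--Meyer convexity of the Busemann function together with the structure theory of totally convex sets. First, I would establish that $b$ is $1$-Lipschitz and proper. Lipschitz is clear, since each function $x\mapsto \lim_s (s-d_g(\gamma(s),x))$ is $1$-Lipschitz and a supremum of $1$-Lipschitz functions is $1$-Lipschitz.

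For properness, I would argue by contradiction. Suppose some sequence $x_i\to\infty$ has $b(x_i)\le \ell$. Join $x_0$ to $x_i$ by minimal geodesics and extract a limit ray $\sigma$ from $x_0$. Since $b(x_0)\ge 0$ after normalization, convexity of $b$ gives that $b\circ\sigma$ is convex. It is also bounded above on $[0,d(x_0,x_i)]$ along the approximating segments, by $\max(b(x_0),\ell)$. In the limit, $b\circ\sigma$ is a bounded convex function on $[0,\infty)$. However, $\sigma\in\mathcal R$, so $b(\sigma(t))\ge t - $ const, because the Busemann function of $\sigma$ itself equals $t$ along $\sigma$. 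This is a contradiction. Hence each $C_\ell$ is closed and bounded, and therefore compact. Total convexity of $C_\ell$ follows because, for any geodesic $c$ (not only minimal ones) with endpoints in $C_\ell$, $b\circ c$ is convex. This needs $b$ to be convex along all geodesics, which is exactly what CGM prove from $\mathrm{sec}\ge0$ via the Toponogov/Busemann comparison argument. So $b\circ c\le\ell$ throughout, which gives (i).

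For (ii), the exhaustion $\cup_\ell C_\ell=M$ is immediate since $b$ is finite-valued. For $\dim C_\ell=n$ with $\ell>0=\inf b$, I would note that $\{b<\ell\}$ is a nonempty open set contained in $C_\ell$, because $b$ is continuous and attains values below $\ell$. Hence $C_\ell$ has nonempty interior, and its dimension as a convex set is $n$.

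For (iii), I would invoke the CGM structure theorem for closed (totally) convex sets: any closed convex set $C$ in a Riemannian manifold is an embedded topological submanifold with boundary. Its interior $\mathrm{int}\,C$ is a smooth totally geodesic submanifold of some dimension $k$, and its boundary $\partial C$ may be nonsmooth. Here $k=n$ by (ii), so the interior is an open subset of $M$, which is trivially totally geodesic. The main technical step is this structure theorem, in particular showing that $\partial C_\ell$ is a topological manifold. This follows from the fact that, for an interior point $p$, the radial projection along geodesics from $p$ to $\partial C_\ell$ is a homeomorphism onto a sphere, using convexity and the strict-inside property. I would cite \cite{CheegerGromoll1972} for this rather than reprove it.
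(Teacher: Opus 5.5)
Your argument is correct and follows the paper's route. The paper simply cites \cite[Proposition 1.3 \& Theorem 1.6]{CheegerGromoll1972} for all three items. You instead supply the standard proofs of (i) and (ii) and cite Theorem~1.6 only for (iii). For properness you take a limit ray $\sigma\in\mathcal{R}$ with $b(\sigma(t))\ge t-\mathrm{const}$ and play it against the convexity bound $b\circ\sigma\le\max(b(x_0),\ell)$. For nonempty interior you use that $\{b<\ell\}$ is open and nonempty because $\ell>\inf_M b$.

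One correction to your sketch of (iii). The claim that radial projection from an interior point $p$ maps $\partial C_\ell$ homeomorphically onto a sphere is false in general. Geodesics from $p$ need not leave $C_\ell$, and $\partial C_\ell$ need not be a sphere. For example, on flat $S^1\times\mathbb{R}^{n-1}$ one has $b(\theta,y)=|y|$, so $C_\ell=S^1\times\overline{B(0,\ell)}$ has boundary $S^1\times S^{n-2}$. Moreover, the $S^1$-geodesic through the interior point $(\theta_0,0)$ stays in $C_\ell$ forever. The manifold-with-boundary structure is a local statement, and any radial-projection argument must be carried out locally, inside a small strongly convex ball around a boundary point. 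So rely on the citation, not on that global picture.
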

\begin{proof}
It follows from \cite[Proposition 1.3 \& Theorem 1.6]{CheegerGromoll1972}.
\end{proof}

We also need the relation between $b(x)$ and $\rho(x)$.
\begin{lma}\label{lma:dist}
Under $\mathrm{sec}\geq 0$, the Busemann function $b$ satisfies the following: 
\begin{enumerate}
    \item[(i)] $b$ is $1$-Lipschitz function on $M$;
    \item[(ii)] There exists a function $\theta(s)$ with $\lim_{s\to+\infty}\theta(s)=0$ such that on $M$ we have
    \begin{equation}
        (1-\theta\circ \rho) \rho\leq b\leq \rho.
    \end{equation}
\end{enumerate}
\end{lma}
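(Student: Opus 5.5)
Part (i) follows directly from the definition. For each ray $\gamma\in\mathcal{R}$, the function $b_\gamma(x):=\lim_{s\to+\infty}(s-d_g(\gamma(s),x))$ is a monotone limit of $1$-Lipschitz functions, so it is $1$-Lipschitz. The supremum of a family of $1$-Lipschitz functions is again $1$-Lipschitz, provided it is finite somewhere. Finiteness holds because $b_\gamma(x)\le d_g(x_0,x)$ by the triangle inequality $s-d_g(\gamma(s),x)\le d_g(\gamma(s),x_0)-d_g(\gamma(s),x)\le \rho(x)$. Hence $b$ is $1$-Lipschitz. The same inequality gives the upper bound $b\le\rho$ in (ii), at least before the normalization $\inf_M b=0$. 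The translation performed after the definition shifts $b$ by the constant $-\inf b$. I will therefore prove (ii) for the unnormalized $b$, note that the shifted function $\tilde b$ satisfies $\tilde b\le \rho+|\inf b|$, and absorb the constant. If a literal $\tilde b\le\rho$ is wanted, I will instead take $x_0$ to lie in the minimum set of $b$ (the soul construction allows this), so that $b(x_0)=0$ and Lipschitz continuity gives $b\le\rho$ directly. Any additive constant in the lower bound is absorbed into $\theta$, because $c/\rho\to0$.

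For the lower bound in (ii) I would argue by contradiction using the compactness of the ray directions at $x_0$ together with Toponogov comparison, which is where $\mathrm{sec}\ge0$ enters. Suppose there are $\epsilon>0$ and points $x_i$ with $\rho_i:=\rho(x_i)\to\infty$ and $b(x_i)\le(1-\epsilon)\rho_i$. Let $\sigma_i$ be minimal geodesics from $x_0$ to $x_i$. After passing to a subsequence, their initial directions converge and the $\sigma_i$ converge to a ray $\gamma\in\mathcal{R}$. The key claim is that, after a further subsequence, the angle $\angle_{x_0}(\sigma_i'(0),\gamma'(0))$ tends to $0$.

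Given this, Toponogov hinge comparison on the hinge at $x_0$ formed by $\sigma_i$ and $\gamma|_{[0,s]}$ gives
\[
d_g(\gamma(s),x_i)^2\le s^2+\rho_i^2-2s\rho_i\cos\phi_i,\qquad \phi_i:=\angle_{x_0}(\sigma_i'(0),\gamma'(0)).
\]
Letting $s\to\infty$ yields $s-d_g(\gamma(s),x_i)\to \rho_i\cos\phi_i$. Hence $b(x_i)\ge b_\gamma(x_i)\ge\rho_i\cos\phi_i$, which exceeds $(1-\epsilon)\rho_i$ once $\phi_i$ is small. This is a contradiction. Defining $\theta(s):=\sup_{\rho(x)\ge s}\bigl(1-b(x)/\rho(x)\bigr)_+$, the contradiction argument shows exactly that $\theta(s)\to0$.

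The main obstacle is the angle claim: a priori the limit ray $\gamma$ only attracts the directions of $\sigma_i$ along a subsequence, and I must make sure that every sequence with $\rho_i\to\infty$ has such a subsequence. The initial vectors $\sigma_i'(0)$ lie in the compact unit sphere at $x_0$, so a subsequence converges to some unit vector $v$. The geodesic $\gamma(t)=\exp_{x_0}(tv)$ is a limit of minimizing segments of lengths $\rho_i\to\infty$, so it is a ray, and $\phi_i=\angle(\sigma_i'(0),v)\to0$ by construction. Thus the claim reduces to the standard fact that limits of minimal segments are rays, and the argument closes. The only real care needed is the bookkeeping of the normalization constant in $b$ discussed above.
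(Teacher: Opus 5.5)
Your proof is correct and follows the paper's route: (i) from the definition and the triangle inequality, and (ii) from Toponogov comparison. You make (ii) explicit by comparing the hinge at $x_0$ formed by the minimal segment to $x$ with a ray whose initial direction is close to it, where the ray comes from compactness of limits of minimal segments; the paper only cites a reference for this step. Your normalization remark is also well taken, since after shifting to $\inf_M b=0$ one generally only has $b\le\rho+C$; the clean fix is to keep $b$ and measure $\rho$ from a minimum point of $b$ rather than re-basing $b$ itself, which changes $\rho$ by a bounded amount that is absorbed into $\theta$.
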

\begin{proof}
    The property (i) follows from definition and triangle inequality. The property (ii) follows from Toponogov’s comparison theorem, see \cite[Lemma 1]{Dress1994}.
\end{proof}

In particular, Lemma~\ref{lma:dist} provides us the lower bound for in-radius of each $C_\ell$. 
\begin{lma}\label{lma:in-rad}
The in-radius of $C_\ell$, defined by 
\begin{equation}
    t_0(C_\ell):=\sup\{ r>0: B(x,r)\Subset C_\ell\},
\end{equation}
satisfies $t_0(C_\ell)\geq \ell$.
\end{lma}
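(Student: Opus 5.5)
The plan is to find a point $x$ with $B(x,\ell)$ contained in $C_\ell$, using only Lemma~\ref{lma:dist}(i) and our normalization $\inf_M b=0$.

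First I will check that the infimum is attained. By Lemma~\ref{lma:dist}(ii), $b\geq (1-\theta\circ\rho)\rho\to+\infty$ as $\rho\to+\infty$, so $b$ is proper. Since $b$ is continuous, it attains its minimum at some point $x_*$, and by the normalization $b(x_*)=0$. Equivalently, $C_0$ is nonempty and compact by Lemma~\ref{lma:property-buse}.

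Next I will use that $b$ is $1$-Lipschitz, which is Lemma~\ref{lma:dist}(i). For any $y\in B(x_*,r)$ we get $b(y)\leq b(x_*)+d_g(x_*,y)<r$. Taking $r=\ell$ gives $B(x_*,\ell)\subseteq\{b<\ell\}\subseteq C_\ell$.

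The only point needing care is the compact containment $\Subset$ in the definition of $t_0(C_\ell)$. I will handle it by shrinking the radius slightly. For any $r<\ell$, the closure $\overline{B(x_*,r)}$ lies in $\{b\leq r\}$, which is contained in the interior of $C_\ell$. Indeed, $\{b<\ell\}$ is open and contained in $C_\ell$, so it lies in the interior. Hence $B(x_*,r)\Subset C_\ell$ for every $r<\ell$, and taking the supremum gives $t_0(C_\ell)\geq\ell$.

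I expect no real obstacle here. The one step that is not immediate is attaining the minimum, and that follows from properness via Lemma~\ref{lma:dist}(ii), or alternatively from the compactness of $C_\ell$ in Lemma~\ref{lma:property-buse}(i).
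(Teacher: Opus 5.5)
Your proof is correct and is essentially the paper's argument: the paper just observes $B(x_0,\ell)\subseteq C_\ell$ from the bound $b\le\rho$ in Lemma~\ref{lma:dist}, which is the same $1$-Lipschitz estimate you use. Centering the ball at a minimizer $x_*$ rather than at $x_0$ is a slightly more careful choice. After translating to $\inf_M b=0$, the bound $b\le\rho$ forces $b(x_0)=0$, so it holds only when $x_0$ itself attains the infimum; your shrinking-radius step also properly handles the $\Subset$ requirement, which the paper passes over.
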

\begin{proof}
By Lemma~\ref{lma:dist}, $B(x_0,\ell)\subseteq C_\ell$ and hence $\ell\leq t_0(C_\ell)$. 
\end{proof}

\begin{lma}\label{lma:vol-ineq}
The Busemann function $b$ is differentiable a.e. and satisfies $|\nabla b|=1$ at differentiable point. Furthermore, 
\begin{equation}
    \mathrm{Vol}_g(C_\ell)=  \int^\ell_0 A(t)\,dt
\end{equation}
where $A(t):=\mathcal{H}^2(\partial C_t)$ for $t>0$.
\end{lma}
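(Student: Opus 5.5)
The plan is to combine Rademacher's theorem with the coarea formula, after checking that $|\nabla b|=1$ almost everywhere.

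\emph{Differentiability.} By Lemma~\ref{lma:dist}(i), $b$ is $1$-Lipschitz, so Rademacher's theorem (in charts) shows that $b$ is differentiable a.e. with $|\nabla b|\le 1$ there.

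\emph{The reverse inequality $|\nabla b|\ge 1$.} I would use the structure of $b$ as a supremum of Busemann functions $b_\gamma(x)=\lim_{s\to\infty}(s-d_g(\gamma(s),x))$. Fix $x$ and $\gamma\in\mathcal{R}$. For each $s$, take a minimal geodesic from $x$ to $\gamma(s)$. By compactness of the unit sphere, along a subsequence $s_k\to\infty$ these converge to a ray $\sigma_x$ from $x$, an asymptotic ray. Along $\sigma_x$ the triangle inequality gives $b_\gamma(\sigma_x(t))\ge b_\gamma(x)+t$. Since $b_\gamma$ is $1$-Lipschitz, this is an equality.

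To pass to the supremum, I would show the sup is attained. The rays from $x_0$ form a compact family, and $\gamma\mapsto b_\gamma(x)$ is upper semicontinuous (indeed $1$-Lipschitz in the initial direction, up to a limiting argument). So for each $x$ there is $\gamma_x$ with $b(x)=b_{\gamma_x}(x)$. Then
\begin{equation}
b(\sigma_x(t))\ge b_{\gamma_x}(\sigma_x(t))=b(x)+t .
\end{equation}
Combined with the $1$-Lipschitz bound, $b$ increases at unit speed along $\sigma_x$. At a differentiable point this forces $\langle\nabla b,\sigma_x'(0)\rangle=1$, hence $|\nabla b|\ge1$, and so $|\nabla b|=1$.

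This attainment or semicontinuity step is where I expect the main care to be needed. If it resists, the fallback is to work with $\gamma$ that nearly attains the sup, up to an error $\varepsilon$, and let $\varepsilon\to0$ in the difference quotient: $b(\sigma(t))\ge b(x)+t-\varepsilon$ for all $t$ still yields the directional derivative bound after dividing by $t$ and choosing $\varepsilon=o(t)$.

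\emph{Volume formula.} Apply the coarea formula to the Lipschitz function $b$ on $C_\ell$:
\begin{equation}
\mathrm{Vol}_g(C_\ell)=\int_{C_\ell}|\nabla b|\,d\mathrm{vol}_g=\int_{0}^{\ell}\mathcal{H}^{2}(\{b=t\})\,dt .
\end{equation}
Here we use $|\nabla b|=1$ a.e. and $\inf b=0$. The level set $\{b=t\}$ equals $\partial C_t$ for $t>\inf b$, by convexity. Indeed, $C_t$ has nonempty interior by Lemma~\ref{lma:property-buse}(ii). A convex function that is nonconstant near a point of level $t>\min b$ has $\{b<t\}$ equal to the interior of $C_t$, so $\{b=t\}=\partial C_t$. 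Therefore $\mathcal{H}^2(\{b=t\})=A(t)$, which gives the stated formula. (In dimension $n$ one reads $\mathcal{H}^{n-1}$; here $n=3$.)
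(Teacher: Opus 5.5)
Your argument is correct and is essentially the paper's proof. Both use Rademacher's theorem for $|\nabla b|\le 1$, then rays $\gamma_j$ nearly attaining the supremum together with asymptotic rays $\sigma_j$ from $x$ along which $b_{\gamma_j}$ grows at unit speed, and then the coarea formula. You also justify $\{b=t\}=\partial C_t$ for $t>0$, which the paper leaves implicit.

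Rely only on your fallback, and drop the attainment route. The function $\gamma\mapsto b_\gamma(x)=\sup_s\bigl(s-d_g(\gamma(s),x)\bigr)$ is a nondecreasing limit of continuous functions of $\gamma$, so it is a priori only lower semicontinuous. Toponogov bounds $d_g(\gamma_1(s),\gamma_2(s))$ only by a multiple of $s$, so it does not give Lipschitz dependence on the initial direction.

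In the fallback, the direction $\sigma_\varepsilon'(0)$ changes with $\varepsilon$. You therefore need either the first-order expansion of $b$ at $x$ to hold uniformly over unit directions, or a compactness argument. The paper uses compactness: it passes to a limit ray $\sigma_j\to\sigma$ and uses continuity of $b$ to get $b(\sigma(s))=b(x)+s$ exactly.
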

\begin{proof}
By Lemma~\ref{lma:dist} and Rademacher's theorem, the Busemann function $b$ is differentiable almost everywhere. At $x$ where $b$ is differentiable, we have $|\nabla b|\leq 1$, since $b$ is $1$-Lipschitz. It remains to prove the lower bound. We denote $b_\gamma(x):=\lim_{s\to+\infty} \left(s-d_g(\gamma(s),x) \right)$. We choose a sequence of ray $\{\gamma_j\}$ such that 
\begin{equation}\label{eqn:B-app}
    b_{\gamma_j}(x)\geq b(x)-j^{-1}.
\end{equation}

For each $\gamma_j$, take $t_i\to +\infty$ and consider the minimizing geodesic $\sigma_{i,j}(s)$ from $x$ to $\gamma_j(t_i)$. By compactness of geodesic, $\sigma_{i,j}$ sub-converges to geodesic $\sigma_j$. On the other hand for all $s\in [0,d_g(x,\gamma_j(t_i)]$,
\begin{equation}
\begin{split}
    \lim_{i\to+\infty}\left(t_i-d_g(\sigma_{i,j}(s),\gamma_j(t_i))\right)&=    s+\lim_{i\to+\infty}\left( t_i-d_g(\gamma_j(t_i),x)\right)\\
    &=s+b_{\gamma_j}(x)
\end{split}
\end{equation}
while the left hand side equals to $b_{\gamma_j}(\sigma_j(s))$. Therefore,
\begin{equation}
    b_{\gamma_j}(x)=b_{\gamma_j}(\sigma_{j}(s))-s
\end{equation}
for all $s\geq 0$. We also assume $\sigma_j$ sub-converges to geodesic $\sigma$ as $j\to+\infty$. This implies 
\begin{equation}
    \begin{split}
        b(\sigma(s))&=\lim_{j\to+\infty} b(\sigma_j(s))\\
        &\geq \lim_{j\to+\infty} b_{\gamma_j}(\sigma_j(s))\\
        &=\lim_{j\to+\infty} b_{\gamma_j}(x)+s\geq b(x)+s.
    \end{split}
\end{equation}
where we have used \eqref{eqn:B-app}. On the other hand since $b$ is $1$-Lipschitz,
\begin{equation}
    b(\sigma(s))-b(x)\leq d_g(\sigma(s),x)=d_g(\sigma(s),\sigma(0))=s.
\end{equation}
This shows that $b(\sigma(s))=b(x)+s$. If we let $v:=\sigma'(0)$, then 
\begin{equation}
    \begin{split}
        \nabla b|_x(v)=\frac{d}{ds}\Big|_{s=0} b(\sigma(s))=1.
    \end{split}
\end{equation}
This shows that $|\nabla b|(x)=1$. 

We now prove the second assertion. By co-area formula and $|\nabla b|=1$ a.e., 
    \begin{equation}
    \begin{split}
       \mathrm{Vol}_g(C_\ell)&=\int_{C_\ell} \,d\mathrm{vol}_g\\
       &= \int_{C_\ell} |\nabla b|\,d\mathrm{vol}_g=\int^\ell_0 A(t)\,dt.
           \end{split}
    \end{equation}
\end{proof}

\begin{lma}\label{lma:Gauss-Bonnet}
Suppose the soul is a point and $n=3$, then all $\ell>0$, we have $$\chi(\partial C_\ell)=2.$$
\end{lma}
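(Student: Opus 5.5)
We show that $\partial C_\ell$ is a topological $2$-sphere, so that $\chi(\partial C_\ell)=2$. The natural route is through the Cheeger--Gromoll soul construction, which produces the soul $\mathcal{S}$ from the sets $C_\ell$. Fix $\ell>0$. By Lemma~\ref{lma:property-buse}, $C_\ell$ is a compact, totally convex set of full dimension $3$. Its interior is a smooth, totally geodesic (hence open) submanifold. Its boundary is the level set $\{b=\ell\}$, which is possibly non-smooth. Compact convex sets with non-empty interior in a Riemannian manifold are topological manifolds with boundary. So $C_\ell$ is a compact topological $3$-manifold with boundary $\partial C_\ell$, and $\partial C_\ell$ is a closed topological surface. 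For such a surface the Euler characteristic is well defined; if it is smooth, or merely $C^{1,1}$ in a suitable sense, it can also be computed by Gauss--Bonnet later. The claim then reduces to identifying the topology of $\partial C_\ell$.

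\textbf{Step 1: $C_\ell$ is a closed $3$-ball.} In the Cheeger--Gromoll construction, the soul $\mathcal{S}$ is obtained by repeatedly taking the sets of points at maximal distance from the boundary of the convex sets, starting from $C_0=\{b\le 0\}$. In particular $\mathcal{S}\subset C_\ell$ for every $\ell\ge 0$, and $\mathcal{S}$ is also a soul of the convex set $C_\ell$. Moreover, the Sharafutdinov retraction (or the distance-to-boundary deformation of Cheeger--Gromoll) gives a deformation retraction of $C_\ell$ onto $\mathcal{S}$. By hypothesis $\mathcal{S}$ is a point, so $C_\ell$ is contractible. A stronger statement is available: for a compact convex body with a point soul $p$, the gradient-like flow of $-d(\cdot,\partial C_\ell)$, equivalently the radial structure from $p$, shows that $C_\ell$ is homeomorphic to a small closed geodesic ball around $p$. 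To see this, one uses that $d_p$ has no critical points in $C_\ell\setminus\{p\}$ relative to the boundary, by convexity. This is exactly the argument by which $M$ itself is diffeomorphic to the normal bundle of the soul. Hence $C_\ell\cong \bar B^3$.

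\textbf{Step 2: conclude.} Since $C_\ell\cong\bar B^3$, we get $\partial C_\ell\cong S^2$, and therefore $\chi(\partial C_\ell)=2$. Alternatively, one can avoid the full ball identification and argue topologically: $\partial C_\ell$ is a closed surface bounding the compact contractible $3$-manifold $C_\ell$. Then $\chi(\partial C_\ell)=2\chi(C_\ell)=2$, using the half-lives-half-dies principle, i.e.\ the doubling formula $\chi(DC_\ell)=2\chi(C_\ell)-\chi(\partial C_\ell)=0$ for the closed $3$-manifold $DC_\ell$ obtained by doubling. This alternative also requires connectedness of $\partial C_\ell$, which follows from Lefschetz duality: $H_0(\partial C_\ell)\cong H^3(C_\ell,\partial C_\ell)$ fits into the long exact sequence with $H^2(C_\ell)=0$ and $H^3(C_\ell)=0$, so $H_0(\partial C_\ell)=\mathbb{Z}$. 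This second route needs only contractibility, so I would use it.

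\textbf{Main obstacle.} The delicate point is regularity. Because $\partial C_\ell$ may be non-smooth, we must justify that $C_\ell$ is a topological manifold with boundary to which duality and doubling apply. I would handle this by a local argument: near any boundary point $q$, choose an interior point $y$ close to $q$. Geodesics from $y$ meet $\partial C_\ell$ exactly once, by total convexity and the fact that the interior is open. This gives local boundary charts, as for convex bodies in $\mathbb{R}^n$, and hence $\partial C_\ell$ is a closed topological (in fact Lipschitz) surface.
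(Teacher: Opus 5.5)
Your route~2 is correct and is essentially the paper's argument: the paper also gets contractibility of $C_\ell$ from the Cheeger--Gromoll deformation of the sublevel sets onto the point soul (their Theorems~2.1 and~2.5) and then just says the result follows, and your doubling identity $\chi(\partial C_\ell)=2\chi(C_\ell)=2$ makes that step explicit. That identity holds whether or not $\partial C_\ell$ is connected, so you can drop both the (loosely stated) Lefschetz-duality connectedness step and the Step~1 ball identification, which rests on an unjustified critical-point claim.
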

\begin{proof}
By \cite[Theorems 2.1 and Theorem 2.5]{CheegerGromoll1972}, $C_s$ is homotopy equivalence to each other. 
    Since the soul is a point, it follows that $C_\ell$ is contractible and thus the result follows. 
\end{proof}

\section{Bujalo's convex body inequality in dimension three}\label{sec:convex-body}
In this section, we discuss the result of Bujalo \cite{Bujalo1978} on an inequality in controlling the integral of curvature by topological information. We first collect some materials used by Bujalo \cite{Bujalo1978}. We will keep the discussion minimal and refer readers to \cite{Bujalo1978} for the detailed exposition. We follow the discussion of \cite[Section 2]{Bujalo1978}. We assume $\mathrm{sec}\geq 0$ and $n=3$ throughout this section.

 For each closed locally convex set $C\Subset M^3$, by the work \cite{Walter1974} of Walter there is an open set $U$ containing $C$ such that:
\begin{enumerate}
    \item [(i)] For each $q\in U$, there is a unique $\Phi(q)\in C$, and a unique
minimal geodesic from $q$ to $\Phi(q)$ which lies entirely in $U$.
\item[(ii)] The function $f(q):=d_g(q,C)$ for $q\in U$, belongs to $C^{1,1}$. 
\end{enumerate}
In particular, $|\nabla f|=1$  on $U\setminus C$ and the map $\Phi$ is  given by 
$$\Phi(q)=\exp_q\left( -f(q)\nabla f(q)\right).$$

We consider $C_\tau:=\{q\in M: d_g(q,C)\leq \tau\}$ and the equidistance surface $F_\tau:=\{q\in M: d(q,C)=\tau\}$ where $F_\tau$ is a $C^{1.1}$-smooth hypersurface for all small $\tau>0$. We fix $N:=F_{\tau_0}$ for sufficiently small $\tau_0>0$ and consider the map $\varphi_\tau:=\Phi_\tau|_N:N\to F_\tau$ for $\tau\in (0,\tau_0]$, where 
\begin{equation}
    \Phi_\tau(q):=\exp_q\left( (\tau-f(q))\nabla f(q)\right).
\end{equation}

For $\tau\in (0,\tau_0]$, the map $\varphi_\tau$ is Lipschitz and hence differentiable a.e. on $N$ by Rademacher’s theorem. The Hausdorff measure of $\partial C$ is thus given by
\begin{equation}
    \mathcal{H}^2(\partial C)=\int_{N} I_0(p)\,dA_N,
\end{equation}
by \cite[Lemma 2.3.1]{Bujalo1978}, see also \cite[Theorem 3.2.3]{Federer1969}.  Furthermore, this can be recovered by approximation according to \cite[Lemma 2.3.2]{Bujalo1978}: 
\begin{equation}\label{lma:appr-meas}
    \mathcal{H}^2(\partial C)=\lim_{\tau\to 0^+} \int_{N} I_\tau(p)\,dA_N
\end{equation}
Here $I_\tau$ denotes the Jacobian of $\varphi_\tau$ which exists a.e. on $N$.

With this terminology, the mean curvature of $\partial C$ is defined through that of $F_\tau$. More precisely there exists $N'\subset N$ with $\mathcal{H}^{n-1}(N\setminus N')=0$ such that the second fundamental form $h_{\tau}$ is defined at $\varphi_\tau\in F_\tau$ for all $\tau\in (0,\tau_0]$. Moreover the mean curvature $\mathbf{H}(\tau,p)\cdot I_\tau(p)$ (weighted with its Jacobian) is uniformly bounded with finite limit
\begin{equation}
    \mathbf{H}(p):=\lim_{\tau\to 0^+} \mathbf{H}(\tau,p)\cdot I_\tau(p)\geq 0,
\end{equation}
from \cite[Lemma 2.4.6 \& Lemma 2.4.7]{Bujalo1978}. From \cite[Corollary 2.4.8]{Bujalo1978}, the total mean curvature $\mathcal{M}(\partial C)$ of $\partial C$ is then defined to be
\begin{equation}\label{eqn:defn-M-C}
    \mathcal{M}(\partial C):=\int_N \mathbf{H}(p)\, dA_N.
\end{equation}

\medskip
We are now ready to state the main inequality in this section.
\begin{prop}\label{prop:Bujalo-ineq}
For all $s>0$, we have
 $$4\pi \chi(\partial C_s) \cdot t_0(C_s)\leq \mathcal{M}(\partial C_s)+\int_{C_s} \mathrm{scal}\,\,d\mathrm{vol}$$
 where $t_0(C)$ denotes the in-radius of a set $C$, defined in Lemma~\ref{lma:in-rad}.
\end{prop}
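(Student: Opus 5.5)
We prove the inequality by integrating the Gauss--Bonnet formula over the inner parallel surfaces of $C_s$ and letting the Riccati equation along normal geodesics turn the boundary mean curvature into scalar curvature. For a compact convex set $C$ with smooth boundary the idea is the following. Let $\nu$ be the inward normal and consider the inner parallel surfaces $\Sigma_t:=\{x\in C: d_g(x,\partial C)=t\}$ for $0\le t<t_0(C)$. Since $\mathrm{sec}\ge 0$ and $\partial C$ is convex, each $\Sigma_t$ stays (weakly) convex up to focal points. At every $t<t_0(C)$ the level set is non-empty, because the inradius is the maximum of $d_g(\cdot,\partial C)$. The plan is to show that
\begin{equation}
\Phi(t):=\int_{\Sigma_t} H\,dA+\int_{\{d_g(\cdot,\partial C)\ge t\}}\mathrm{scal}\,d\mathrm{vol}
\end{equation}
is non-increasing and satisfies $-\Phi'(t)\ge 4\pi\chi(\partial C)$ (in the barrier/distributional sense). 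Integrating from $0$ to $t_0(C)$ and using $\Phi\ge 0$ then gives the claim.

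The key computation is the following. Along the normal flow with unit speed, write $A$ for the second fundamental form, $H=\mathrm{tr}A$ and $J$ for the area Jacobian. The Riccati equation gives
\begin{equation}
\partial_t(HJ)=-(|A|^2+\Ric(\nu,\nu))J+H^2J=(2\sigma_2(A)-\Ric(\nu,\nu))J,
\end{equation}
with sign conventions fixed so that $J'=-HJ$ for the inward flow. Hence
\begin{equation}
\frac{d}{dt}\int_{\Sigma_t}H\,dA\le\int_{\Sigma_t}\big(2\sigma_2(A)-\Ric(\nu,\nu)\big)\,dA,
\end{equation}
where the inequality accounts for the mass lost at cut/focal points, which has a favourable sign because $HJ\ge0$. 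By the Gauss equation in dimension three, $2\sigma_2(A)=2K_{\Sigma}-\mathrm{scal}+2\Ric(\nu,\nu)$. Substituting,
\begin{equation}
\frac{d}{dt}\int_{\Sigma_t}H\le 2\int_{\Sigma_t}K_\Sigma-\int_{\Sigma_t}\mathrm{scal}+\int_{\Sigma_t}\Ric(\nu,\nu)\le 4\pi\chi(\Sigma_t)-\int_{\Sigma_t}\mathrm{scal}+\int_{\Sigma_t}\Ric(\nu,\nu).
\end{equation}
The leftover term $\int\Ric(\nu,\nu)\le\int\mathrm{scal}/2$ is annoying. A cleaner route is to use instead the quantity $H^2-|A|^2=2\sigma_2$, which together with $\Ric(\nu,\nu)$ combines to $K_\Sigma$-terms. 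Concretely, I would pair $2\int K_\Sigma=\int(\mathrm{scal}-2\Ric(\nu,\nu)+2\sigma_2)$ and use the co-area identity $\frac{d}{dt}\int_{\{d\ge t\}}\mathrm{scal}=-\int_{\Sigma_t}\mathrm{scal}$ (since $|\nabla d|=1$). I expect to get exactly $-\Phi'=4\pi\chi(\Sigma_t)+\int(\text{nonnegative terms})$ after discarding $|A|^2$-type terms of good sign; the bookkeeping of which nonnegative term is dropped is where Bujalo's convexity enters. Topologically, $\chi(\Sigma_t)=\chi(\partial C)$ wherever $\Sigma_t$ is a smooth deformation of $\partial C$. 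The total Gauss curvature of the possibly singular $\Sigma_t$ is treated as a measure, which is nonnegative at convex corners, so that Gauss--Bonnet still gives at least $2\pi\chi$.

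For the actual $C_s$, whose boundary is only Lipschitz, I would approximate. The proposal is to replace $\partial C_s$ by the $C^{1,1}$ outer parallel surfaces $F_\tau$ described above; the set $C^\tau=\{d_g(\cdot,C_s)\le\tau\}$ is convex with inradius at least $t_0(C_s)$. Applying the smooth argument to $C^\tau$ gives the inequality for $C^\tau$ (smoothing $F_\tau$ further if needed). As $\tau\to0$, the quantity $\int_{F_\tau}H$, written as $\int_N\mathbf H(\tau,p)I_\tau(p)\,dA_N$, converges to $\mathcal M(\partial C_s)$ by the cited Lemmas 2.4.6--2.4.7 and dominated convergence. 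Also $\int_{C^\tau}\mathrm{scal}\to\int_{C_s}\mathrm{scal}$, and $\chi$ is preserved. The main obstacle is making the inner-flow monotonicity rigorous past the focal/cut locus: one needs to control the level sets $\Sigma_t$ inside $C$ when they become singular. There I would mimic Bujalo, working with $d_g(\cdot,\partial C)$ as a concave function on $C$ under $\mathrm{sec}\ge0$, whose superlevel sets are convex, and using the generalized mean curvature measure in place of $H$.
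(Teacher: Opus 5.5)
\textbf{Sign error in the Riccati step.} For the inward flow with $J'=-HJ$ one has $\partial_t H=|A|^2+\Ric(\nu,\nu)$, hence $\partial_t(HJ)=\big(\Ric(\nu,\nu)-2\sigma_2(A)\big)J$. The formula you wrote is the one for the outward flow. You can check this on concentric spheres in $\mathbb{R}^3$: $\int_{\Sigma_t}H=8\pi(R-t)$ decreases, while your formula makes it increase at rate $8\pi$.

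As written, your chain only gives $\Phi'\le 4\pi\chi-2\int_{\Sigma_t}\mathrm{scal}+\int_{\Sigma_t}\Ric(\nu,\nu)$. That says nothing about $-\Phi'\ge 4\pi\chi$, and the ``annoying'' leftover term is an artifact of the sign. With the correct sign and the Gauss equation, on any $t$-interval where $\Sigma_t$ is smooth you get the exact identity $-\Phi'(t)=4\pi\chi(\Sigma_t)+\int_{\Sigma_t}\Ric(\nu,\nu)\,dA\ge 4\pi\chi(\partial C)$, with no further bookkeeping.

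\textbf{The real gap is the singular regime, and there your remedy points the wrong way.} With the correct sign the Gauss curvature enters as $-2\int K$, so you need $\int K\ge 2\pi\chi$. Once the inner parallel surfaces cross the cut locus of $\partial C$, they acquire convex ridges that carry a nonnegative part of the curvature measure. Since the total is exactly $2\pi\chi$, the regular part satisfies $\int_{\Sigma_t^{\mathrm{reg}}}K\le 2\pi\chi$, which is the opposite inequality.

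Concretely, take a Euclidean cube of side $a$ with edges rounded at radius $r<a/2$. For $r<t<a/2$ the regular part of $\Sigma_t$ is six flat squares, so your $\Phi$ is constant (zero) there, and $-\Phi'\ge 8\pi$ fails on $(r,a/2)$, even distributionally. The conclusion $\Phi(0)\ge 8\pi t_0$ survives only because $\Phi$ drops by $6\pi(a-2r)$ at $t=r$: this is the mass $HJ$ of the rounded edges swallowed by the ridges.

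A genuinely monotone quantity must therefore use the generalized total mean curvature of the singular surfaces $\Sigma_t$, ridges included. For the sharp cube of side $a-2t$ this is $6\pi(a-2t)$, with derivative $-12\pi\le-8\pi$. You would also need a variation formula for this quantity across the cut locus in a curved ambient manifold. That is exactly the content of Bujalo's Theorem~5.2.1, which the paper simply applies to the closed convex set $C_s$; ``mimic Bujalo'' leaves precisely this step unproved.

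\textbf{A smaller point.} The outer parallel sets $C^\tau$ are in general only almost convex (the paper records $h_\tau\ge -C_0\tau$). Your smooth convex argument uses concavity of $d_g(\cdot,\partial C)$ and $HJ\ge0$, so it does not apply to $C^\tau$ verbatim.
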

\begin{proof}
Since $C_s$ is closed convex set by Lemma~\ref{lma:property-buse}, this follows from applying \cite[Theorem 5.2.1]{Bujalo1978} to the sublevel set $C_s$ of Busemann function.
\end{proof}

\begin{rem}
    If $C_s$ in Proposition~\ref{prop:Bujalo-ineq} has a smooth boundary $\partial C_s$, then $\mathcal{M}(\partial C_s)$ coincides with the total mean curvature of $\partial C_s$ by construction.
\end{rem}

\subsection{Minkowski inequality on convex body}\label{sec:Minkineql}

In this section, we will establish a radial Minkowski inequality relating the generalized total mean curvature of boundary of level set $C_s$ of Busemann function and its Hausdorff measure. 

\begin{thm}\label{thm:mink}
For every $s>0$, we have 
\begin{equation}
    s\cdot \mathcal{M}(\partial C_s)\leq 2\mathcal{H}^2(\partial C_s).
\end{equation}
\end{thm}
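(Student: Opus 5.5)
We prove the inequality first for smooth, strictly convex approximations of $C_s$ and then pass to the limit, comparing it with the model case. In $\mathbb{R}^3$ with $C_s=B(0,s)$ we have $\mathcal{M}=\int_{\partial B}H=2s^{-1}\cdot 4\pi s^2$. Hence $s\mathcal{M}=2\mathcal{H}^2$, so equality holds there, and this suggests a radial argument. The key inputs are two. The first is the in-radius bound $B(x_0,s)\subseteq C_s$ from Lemma~\ref{lma:in-rad}, so that every point of $\partial C_s$ lies at distance at least $s$ from $x_0$. The second is the convexity of $\rho$-type functions, or more directly of the distance $f=d(\cdot,C_\tau)$, under $\mathrm{sec}\geq 0$.

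\emph{Step 1: a Minkowski-type integral identity on the equidistant surfaces.} We work on $F_\tau$, the boundary of the $\tau$-neighbourhood of $C_s$. It is $C^{1,1}$, with unit outer normal $\nu=\nabla f$ and mean curvature $\mathbf{H}(\tau,\cdot)\geq 0$. Take the function $\phi=\tfrac12\rho^2$, where $\rho=d(x_0,\cdot)$. Under $\mathrm{sec}\geq 0$, Hessian comparison gives $\nabla^2\phi\leq g$ in the barrier sense, so the tangential Laplacian satisfies $\Delta_{F_\tau}\phi\leq 2-\mathbf{H}\,\langle\nabla\phi,\nu\rangle$. Rather than use the tangential Laplacian, we plan to integrate the tangential divergence of the tangential part $X^T$, where $X=\nabla\phi$. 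This gives
\[
\int_{F_\tau}\mathrm{div}_{F}X\,dA=\int_{F_\tau}\mathbf{H}\langle X,\nu\rangle\,dA ,
\]
which holds on closed $C^{1,1}$ surfaces. Combined with $\mathrm{div}_F X=\mathrm{tr}_F\nabla^2\phi\leq 2$, this yields
\[
\int_{F_\tau}\mathbf{H}\,\rho\langle\nabla\rho,\nu\rangle\,dA\leq 2\,\mathcal{H}^2(F_\tau).
\]
The cut locus of $x_0$ is handled by a Calabi-type trick or a smoothing of $\rho$. Alternatively, on $\mathbb{R}^3$ with soul a point we may use the Sharafutdinov retraction structure.

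\emph{Step 2: the support function.} We need $\rho\langle\nabla\rho,\nu\rangle\geq s$ on $F_\tau$, up to $o(1)$ as $\tau\to0$. This is the main obstacle, since $\rho\geq s$ alone is not enough. What is needed is the support function being at least the in-radius: the geodesic from $x_0$ meets $\partial C_s$ at an angle controlled by convexity. I will first try Toponogov. Take $q\in\partial C_s$ and the supporting "half-space" at $q$ given by the convex set $C_s\supseteq B(x_0,s)$. Comparison for the hinge formed by the segment $x_0q$ and a geodesic in the direction tangent to the supporting plane then shows $\rho(q)\cos\angle(\nabla\rho,\nu)\geq d(x_0,\text{support})\geq s$. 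Here the fact that the ball lies on one side of the totally convex set is used, via the first variation of distance from $x_0$ to the boundary. If this pointwise bound fails, the fallback is to replace $\phi$ by $\tfrac12 b^2$ or to use $\nabla b$ directly. Then $\langle\nabla b,\nu\rangle=1$ on the level set, $b=s$ there, and $\nabla^2 b\geq 0$. However, this convexity has the wrong sign for an upper bound, which is why $\rho$ is the preferred choice.

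\emph{Step 3: limit $\tau\to 0$.} Transport the inequality of Step 1 to $N$ via $\varphi_\tau$ using the Jacobians $I_\tau$. This gives $\int_N \mathbf{H}(\tau,p)I_\tau(p)(s-o(1))\,dA_N\leq 2\int_N I_\tau\,dA_N$. Letting $\tau\to0$, we use the uniform boundedness and pointwise convergence of $\mathbf{H}(\tau,\cdot)I_\tau$ with dominated convergence, the definition \eqref{eqn:defn-M-C}, and the approximation \eqref{lma:appr-meas}. The result is $s\,\mathcal{M}(\partial C_s)\leq 2\mathcal{H}^2(\partial C_s)$.
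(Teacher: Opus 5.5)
Your Steps 1 and 3 match the paper's proof. These are the test function $\tfrac12\rho^2$ with $\nabla^2(\tfrac12\rho^2)\le g$, integration over surfaces approximating $\partial C_s$, and the limit $\tau\to0$ in Bujalo's framework. Step 2, which you rightly call the main obstacle, is not actually proved.

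\textbf{The gap in Step 2.} There is no supporting half-space in a Riemannian manifold. Let $\gamma$ be a minimizing geodesic from $x_0$ to $q$, with $L=\rho(q)$ and $\theta=\angle(\gamma'(L),\nu)$. Let $\sigma$ be the geodesic from $q$ in the unit direction of $\nu^\perp$ closest to $-\gamma'(L)$. The hinge comparison then gives only $d(x_0,\sigma(L\sin\theta))\le L\cos\theta$. To conclude $L\cos\theta\ge s$ you must know that $\sigma$ is outside $B(x_0,s)$ at time $L\sin\theta$, which is of order $L$. The in-radius bound does not give this. Nor does a first-variation argument: it is infinitesimal at $q$ and only handles the point of $\partial C_s$ nearest to $x_0$. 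Your fallback then throws away the ingredient you need. Convexity of $b$ has the wrong sign for the Hessian bound in Step 1, but exactly the right sign for the support-function bound in Step 2.

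\textbf{The missing argument.} The paper's argument takes one line. Since $b(x_0)=0$ and $b(q)=s$, the function $b\circ\gamma$ is convex on $[0,L]$, so its left derivative at $L$ is at least $s/L$. In the non-smooth setting, take an outward support normal $\nu$ of $C_s$ at $q$. Some multiple $\lambda\nu$ is a subgradient of $b$, with $\lambda>0$ because $s>\inf b$ and $\lambda\le1$ because $b$ is $1$-Lipschitz. The subgradient inequality along $\gamma$ reversed gives $0=b(x_0)\ge s-\lambda L\langle\nu,\gamma'(L)\rangle$, hence $L\langle\nu,\gamma'(L)\rangle\ge s$. The same inequality, $b(\sigma(t))\ge s$ whenever $\sigma'(0)\perp\nu$, would also repair your Toponogov route, but then the hinge comparison is unnecessary.

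\textbf{Two smaller points.} First, you need the bound on $F_\tau$ uniformly up to $o(1)$. There the normal at $x$ is a support normal at the foot point $\Phi(x)$, while $\rho$ and $\nabla\rho$ are evaluated at $x$. The paper handles this by compactness: limits of minimizing geodesics are minimizing, and limits of these normals are support normals at the limit point. Second, $\mathbf{H}(\tau,\cdot)$ on $F_\tau$ is only $\ge -C\tau$, not $\ge 0$. This matters because you multiply your lower bound for $\rho\langle\nabla\rho,\nu\rangle$ by $\mathbf{H}$, but it costs only an $O(\tau)$ error.
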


To make the idea more transparent, we split it into smooth case and general case. The general case requires the terminology introduced in Section~\ref{sec:convex-body}.

\subsubsection{Smooth case}
We first discuss the proof of Theorem~\ref{thm:mink} when all objects are smooth.

\begin{prop}\label{prop:thm-mink-smooth}
Theorem~\ref{thm:mink} holds when $\partial C_s$ is a smooth hypersurface. 
\end{prop}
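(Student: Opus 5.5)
The plan is to prove a Riemannian version of the Euclidean Minkowski formula $\int_\Sigma H\langle x,\nu\rangle = 2|\Sigma|$ on $\Sigma:=\partial C_s$, using the position-type vector field $X=\nabla(\rho^2/2)=\rho\nabla\rho$, where $\rho=d_g(x_0,\cdot)$. In the Euclidean model the support function of $C_s$ about the inner ball is $\langle x,\nu\rangle\ge s$. So the inequality $s\,\mathcal{M}(\partial C_s)\le 2\mathcal{H}^2(\partial C_s)$ should follow from two facts: an upper bound on the tangential divergence of $X$ from Hessian comparison, and a lower bound $\rho\langle\nabla\rho,\nu\rangle\ge s$ on $\Sigma$ from $B(x_0,s)\subset C_s$ (Lemma~\ref{lma:dist}) together with convexity of $b$.

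\textbf{Step 1 (tangential Laplacian).} On the smooth closed surface $\Sigma$ with outward normal $\nu$ and mean curvature $H\ge 0$ (which holds by convexity of $C_s$), I use
\begin{equation}
\Delta_\Sigma\big(\rho^2/2\big)=\operatorname{tr}_\Sigma \nabla^2(\rho^2/2)-H\,\rho\langle\nabla\rho,\nu\rangle .
\end{equation}
Under $\mathrm{sec}\ge 0$, Hessian comparison gives $\nabla^2(\rho^2/2)\le g$, so $\operatorname{tr}_\Sigma\nabla^2(\rho^2/2)\le 2$. This holds pointwise away from the cut locus of $x_0$ and in the barrier/distributional sense globally, because $\rho^2/2$ is semiconcave and its singular part has the favourable sign. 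Integrating over the closed surface $\Sigma$ then yields
\begin{equation}
0\le 2\mathcal{H}^2(\Sigma)-\int_\Sigma H\,\rho\langle\nabla\rho,\nu\rangle\,dA .
\end{equation}

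\textbf{Step 2 (support function bound).} I claim $\rho\langle\nabla\rho,\nu\rangle\ge s$ at every $x\in\Sigma$ where $\rho$ is differentiable; at other points a one-sided version using any minimizing geodesic from $x_0$ suffices. Let $v$ be the unit vector in $T_x\Sigma$ lying in the plane spanned by $\nabla\rho$ and $\nu$ and pointing towards $x_0$, and set $z_t=\exp_x(tv)$. Since $b$ is convex with $b(x)=s$ and $\nabla b\parallel\nu\perp v$, the function $t\mapsto b(z_t)$ is convex with zero derivative at $t=0$. Hence $b(z_t)\ge s$, so $z_t\notin \mathrm{int}\, C_s\supseteq B(x_0,s)$ and $d(x_0,z_t)\ge s$. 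On the other hand, Toponogov's hinge comparison applied to the hinge at $x$ (sides of lengths $\rho(x)$ and $t$) bounds $d(x_0,z_t)$ from above by the Euclidean third side. Minimizing this over $t$ gives $\rho(x)\,|\langle\nabla\rho,\nu\rangle|$. One also checks $\langle\nabla\rho,\nu\rangle\ge 0$: otherwise the geodesic from $x_0$ would exit the convex set $C_s$ and re-enter it. Together these give the claim. Inserting it into Step 1, and using $H\ge0$, gives $s\int_\Sigma H\le 2\mathcal{H}^2(\Sigma)$, which is Theorem~\ref{thm:mink} since $\mathcal{M}(\partial C_s)=\int_\Sigma H$ in the smooth case.

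\textbf{Main obstacle.} I expect the delicate point to be the justification at the cut locus of $x_0$, both for integrating the inequality in Step 1 and for the hinge argument in Step 2. The first approach I would try is to replace $\rho^2/2$ by its semiconcave barrier formulation: at any point, the function $\tfrac12 d(\cdot,\gamma(\epsilon))^2+$const built along a minimizing geodesic $\gamma$ from $x_0$ is a smooth upper barrier with Hessian at most $(1+o(1))g$. Then $\Delta_\Sigma(\rho^2/2)\le 2-H\rho\langle\nabla\rho,\nu\rangle$ holds in the viscosity sense, hence distributionally, by Calabi's argument on the compact surface $\Sigma$, and integrating against the constant test function $1$ gives the bound. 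If that becomes awkward, a fallback is to perturb $x_0$ slightly so that the cut locus meets $\Sigma$ in a set of measure zero, and then use the standard fact that the singular part of $\Delta\rho^2$ is non-positive.
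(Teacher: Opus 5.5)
Your proposal is correct. Step 1 is exactly the paper's argument: the function $u=\frac12\rho^2$, the bound $\nabla^2u\le g$ from Hessian comparison, the identity $\Delta_\Sigma u=\tr_{T\Sigma}\nabla^2u-H\,\partial_\nu u$ in the barrier (hence distributional) sense, and integration over the closed surface.

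The difference is in Step 2. The paper gets $\partial_\nu^-u\ge s$ from one-variable convexity along the \emph{radial} geodesic. If $\gamma$ is a minimizing geodesic from $x_0$ to $x\in\partial C_s$ of length $L$, then $f=b\circ\gamma$ is convex with $f(0)=0$ and $f(L)=s$, so $f'(L^-)\ge s/L$, and $f'(L^-)$ is then compared with $\langle\nu,\gamma'(L)\rangle$. You instead use convexity along the \emph{tangential} geodesics $\exp_x(tv)$, $v\in T_x\Sigma$. This shows they avoid $\{b<s\}\supseteq B(x_0,s)$. Toponogov's hinge comparison then turns ``every tangent geodesic stays at distance at least $s$ from $x_0$'' into $\rho\langle\nabla\rho,\nu\rangle\ge s$, which is the Riemannian form of ``support function $\ge$ in-radius''.

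The paper's route is shorter and uses only the convexity of $b$ and $b(x_0)=0$. It also implicitly uses that subgradients of the $1$-Lipschitz convex function $b$ at $x$ have the form $\lambda\nu$ with $\lambda\in(0,1]$, which is what gives $f'(L^-)\le\langle\nu,\gamma'(L)\rangle$. Yours invokes the global Toponogov theorem. In exchange, it is stated directly in terms of the in-radius, the same quantity $t_0(C_s)$ that enters Bujalo's inequality.

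One phrasing fix: $b$ need not be differentiable on $\partial C_s$ even when the level set is smooth, so ``$\nabla b\parallel\nu\perp v$'' is not literally available. Say instead that $b(\exp_x(tv))-s=O(t^2)$, because $b\equiv s$ on $\Sigma$ and $b$ is $1$-Lipschitz. So this convex function of $t$ has zero derivative at $t=0$ and therefore stays $\ge s$.
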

\begin{proof}
Let $\nu$ be the outward unit normal to $\partial C_s$.  In our convention, the second fundamental form is given by $h(X,Y)=\langle \nabla_X \nu,Y\rangle$.  We consider the function $u:=\frac12\rho^2$. By Hessian comparison and $\mathrm{sec}\geq 0$, we have 
\begin{equation}
    \nabla^2 u\leq g
\end{equation}
in the sense of support functions and hence its restriction on $\Sigma=\partial C_s $, $u$ satisfies 
\begin{equation}\label{eqn:lap}
    \Delta_{\Sigma}u=\tr_{T\Sigma} \nabla^2 u-H\cdot \partial_\nu^- u\leq 2-H\cdot \partial_\nu^- u,
\end{equation}
in the sense of barrier, and thus distribution \cite[Appendix A]{MMU2014}. Here $\partial_\nu^-u$ is the least one-sided normal derivative determined by the minimizing geodesic from $p$.

By integrating \eqref{eqn:lap} over $\Sigma$, we conclude 
\begin{equation}\label{eqn:smooth-H-area}
    \begin{split}
        \int_\Sigma H\cdot \partial_\nu^- u\,\, d\mathrm{A}_\Sigma\leq 2\cdot\mathrm{Area}(\Sigma).
    \end{split}
\end{equation}

Since $H\geq 0$, it suffices to control $\partial_\nu u$ from below. Fix $x\in \Sigma$, we let $\gamma:[0,L]\to M$ be a minimizing geodesic from $x_0$ to $x$ with length $L$. Then 
\begin{equation}
    \begin{split}
        \partial_\nu^- u&=\langle \nu,\nabla u\rangle=\rho \langle \nabla b,\nabla \rho\rangle\\
        &=L \cdot \langle \nabla b,\gamma'(L)\rangle \\
        &=L\cdot \frac{d}{dt}\Big|_{t=L^-}( b\circ \gamma)(t).
    \end{split}
\end{equation}

If we denote $b\circ \gamma$ by $f$, then the convexity of $b$ implies $f$ is convex. Since $f(0)=0$ and $f(L)=s$, this implies $f'(L)\geq sL^{-1} $ and therefore,
\begin{equation}\label{eqn:u_nu-smooth}
\partial_\nu^- u=L\cdot \frac{d}{dt}\Big|_{t=L^-}f(t)\geq s.
\end{equation}

By substituting \eqref{eqn:u_nu-smooth} into \eqref{eqn:smooth-H-area}, this implies 
\begin{equation}
     s\cdot  \int_{\partial C_s} H\,d\mathrm{A}_{\partial C_s}\leq 2\cdot\mathrm{Area}(\partial C_s).
\end{equation}

This completes the proof.
\end{proof}

\subsubsection{General case}

We now prove Theorem~\ref{thm:mink} in full generality, i.e. when $\Sigma:=\partial C_s$ is still convex but is possibly non-smooth. 

\begin{proof}[Proof of Theorem~\ref{thm:mink}]

Following the discussion in section~\ref{sec:convex-body}, we consider 
\begin{equation}
    C_{s,\tau}:=\{q\in M: d_g(q,C_s)\leq\tau\},\;\;\text{and}\;\; \Sigma_{\tau}:=\{ q\in M: d_g(q,C_s)=\tau\}.
\end{equation}

We also denote $N:=\Sigma_{\tau_0}$ for sufficiently small $\tau_0>0$. For all $\tau\in (0,\tau_0]$, each $\Sigma_\tau$ is a closed $C^{1,1}$ hypersurface in $M$. We let $N'$ be the 
set where $\mathcal{H}^2(N\setminus N')=0$ where $h_\tau\circ\varphi_\tau$ is defined on $N'$. By \cite[Lemma 2.2.18 \& (2.2.2)]{Bujalo1978}, its second fundamental form $h_\tau\circ\varphi_\tau(p)$ satisfies 
\begin{equation}
-C_0\tau\leq     h_\tau\circ \varphi_\tau \leq C_1\tau^{-1}.
\end{equation}

By \cite[Lemma 5.6]{CRW2015}, we might further regularize each $C^{1,1}$ hypersurface $\Sigma_\tau$ by smooth hypersurfaces $\Sigma_{\tau,j}$ such that $\Sigma_{\tau,j}$  converges to $\Sigma_\tau$ in $C^1$ topology and satisfies 
\begin{equation}\label{eqn:h-bdd}
-C_0\tau-j^{-1}\leq     h_{\Sigma_{\tau,j}} \leq C_1\tau^{-1}+j^{-1}.
\end{equation}

Since $\Sigma_{\tau,j}$ is smooth, we might apply \eqref{eqn:smooth-H-area} from the proof of Proposition~\ref{prop:thm-mink-smooth} to deduce 
\begin{equation}\label{eqn:appr-}
    \begin{split}
        \int_{\Sigma_{\tau,j}} H_{\Sigma_{\tau,j}}\cdot \partial_\nu^- u\,d\mathrm{A}_{\Sigma_{\tau,j}}\leq 2\cdot\mathrm{Area}(\Sigma_{\tau,j}).
    \end{split}
\end{equation}
for all $\tau\in (0,\tau_0]$ and $j\to+\infty$. Using \eqref{eqn:h-bdd} and $|\nabla u|\leq C_1$, 
\begin{equation}
    \begin{split}
\inf_{\Sigma_{\tau,j}}\partial_\nu^-u \cdot \int_{\Sigma_{\tau,j}} H_{\Sigma_{\tau,j}}  d\mathrm{A}_{\Sigma_{\tau,j}}
    &\leq \left( 2+C_2(\tau+j^{-1})\right)\cdot\mathrm{Area}(\Sigma_{\tau,j})
    \end{split}
\end{equation}

By the two-sided bound of second fundamental form \eqref{eqn:h-bdd}, we might pass $j\to+\infty$ to conclude 
\begin{equation}\label{eqn:H-u-seq}
    \begin{split}
\inf_{\Sigma_{\tau}}\partial_\nu^-u \cdot \int_{N} \mathbf{H}(\tau,p) \cdot I_\tau(p) d\mathrm{A}_{N}
    &\leq \left( 2+C_2\tau\right)\cdot\mathrm{Area}(\Sigma_{\tau})
    \end{split}
\end{equation}
where $\inf_{\Sigma_{\tau}}\partial_\nu^-u$ denotes the infin-mum of least one-sided normal derivatives determined by the minimizing geodesic from $x_0$ to $x\in \Sigma_\tau$.

We next claim that the normal derivatives $\partial_\nu^- u$ is asymptotically bounded from below by $s$ as in the smooth case.
\begin{claim}\label{claim:lower-bdd-dini}
As $\tau\to 0^+$, we have 
\begin{equation}
   \liminf_{\tau\to 0^+} \left(\inf_{\Sigma_{\tau}}\partial_\nu^-u\right)\geq s
\end{equation}
\end{claim}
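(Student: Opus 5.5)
The plan is to replace the convexity of $b\circ\gamma$ from the smooth case by a supporting-cone argument. The point is that $C_s$ contains the ball $B(x_0,s)$, by Lemma~\ref{lma:in-rad}, and $\nu$ is almost a supporting normal to $C_s$ near $\partial C_s$.

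Fix $x\in\Sigma_\tau$ and set $p=\Phi(x)\in\partial C_s$, so $d_g(x,p)=\tau$. The outward normal of $\Sigma_\tau$ at $x$ is $\nu=\nabla f(x)$, namely the initial velocity at $x$ of the extension beyond $x$ of the minimizing geodesic from $p$ to $x$. Let $\gamma:[0,L]\to M$ be a minimizing geodesic from $x_0$ to $x$ realizing the least one-sided derivative. Then
\begin{equation}
\partial^-_\nu u(x)=L\,\langle \gamma'(L),\nu\rangle = -L\cos\angle_x\bigl(-\gamma'(L),\nu\bigr),
\end{equation}
so it suffices to show that the angle at $x$ between $\nu$ and the direction $-\gamma'(L)$ pointing back to $x_0$ is at least $\tfrac{\pi}{2}+\arcsin(s/L)-o(1)$ as $\tau\to0^+$. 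Since $L\leq s+\tau+o(1)$ is false in general, I will instead aim for $\sin(\text{angle}-\pi/2)\ge (s-o(1))/L$, which is exactly what gives $L\langle\gamma',\nu\rangle\ge s-o(1)$.

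\emph{Step 1 (almost supporting property of $\nu$).} For any $y\in C_s$, let $w$ be the initial unit direction at $x$ of a minimizing geodesic from $x$ to $y$. I claim that $\langle w,\nu\rangle\le -c(\tau)$ or at least $\le o(1)$ uniformly as $\tau\to0^+$. The intended argument uses the first-variation formula for $f=d_g(\cdot,C_s)$, which is $C^{1,1}$ on $U$ by Walter's result. Along the geodesic to $y$ the function $f$ decreases from $\tau$ to $0$. Moreover, by the Hessian bound for $f$ (the second fundamental form bound $h_\tau\ge -C_0\tau$ from \cite{Bujalo1978}), $f$ is almost convex along this geodesic with error $O(\tau)$ times length squared. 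Hence its initial derivative $\langle w,\nu\rangle$ is at most $(0-\tau)/d_g(x,y)+O(\tau)\,d_g(x,y)$, which tends to $0$ uniformly in $y\in C_s$ as $\tau\to0$, since $\mathrm{diam}(C_s)$ is bounded.

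\emph{Step 2 (the cone over $B(x_0,s)$ is wide).} By Lemma~\ref{lma:in-rad}, $B(x_0,s)\subseteq C_s$. I use Toponogov's comparison with $\sec\ge0$: for $y$ with $d_g(x_0,y)<s$, the angle at $x$ of the geodesic triangle $x,x_0,y$ is at least the Euclidean comparison angle. Choosing $y$ along suitable geodesics from $x_0$, the directions $w$ from $x$ to points of $B(x_0,s)$ then fill every direction making angle less than $\arcsin(s/L)$ with $-\gamma'(L)$. The comparison angle of a Euclidean triangle with sides $L$, $<s$ at $x_0$ is exactly that. The filling statement needs a small continuity/degree argument on the sphere of directions at $x$, or equivalently bounding the angle of one carefully chosen $y$ in the plane spanned by $-\gamma'(L)$ and $\nu$.

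\emph{Step 3 (conclusion).} Take the direction $w$ in the plane of $-\gamma'(L)$ and $\nu$ that makes angle $\arcsin(s/L)-\epsilon$ with $-\gamma'(L)$ and is tilted toward $\nu$. By Step 2 it points into $C_s$, so by Step 1 $\langle w,\nu\rangle\le o(1)$. Elementary planar trigonometry then gives $\angle(-\gamma'(L),\nu)\ge \tfrac\pi2+\arcsin(s/L)-\epsilon-o(1)$. Hence $L\langle\gamma'(L),\nu\rangle\ge L\sin(\arcsin(s/L))-o(1)=s-o(1)$, uniformly in $x\in\Sigma_\tau$, because $L$ is bounded above and below on $\Sigma_\tau$. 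Letting $\epsilon\to0$ and then $\tau\to0^+$ gives the Claim.

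The main obstacle is Step 2: making rigorous that a direction tilted toward $\nu$ at exactly the comparison angle actually hits $B(x_0,s)$. Toponogov controls angles of triangles with prescribed vertices, not where a prescribed direction lands. I would first try to avoid the filling statement by running Step 1 directly along the geodesic from $x$ toward the point $y=\exp_{x_0}(s'\,e)$, with $e$ chosen by a continuity argument in $e\in S^2$. If that fails, I would fall back on the smooth-case idea: show that $\nabla b$ and $\nu$ become close at points of $\Sigma_\tau$ as $\tau\to0$ (upper semicontinuity of the superdifferential of the convex function $b$), and use convexity of $b\circ\gamma$, $b(x_0)=0$, $b(x)\ge s$.
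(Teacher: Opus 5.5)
Your route is genuinely different from the paper's and can be made to work. As written, however, the two steps that carry the weight are not established.

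\emph{Step 2.} The triangle (angle) form of Toponogov only bounds from below the angle at $x$ subtended by each individual point of $B(x_0,s)$. It says nothing about whether a \emph{prescribed} direction $w$ lands in the ball, which is why you get stuck. You say Toponogov cannot control where a prescribed direction lands, but that is exactly what the hinge form does. Take the hinge at $x$ formed by $\gamma$ traversed backwards (minimizing, length $L$) and $\eta(t)=\exp_x(tw)$ (an arbitrary geodesic), with angle $\theta<\pi/2$ between $w$ and $-\gamma'(L)$. Under $\mathrm{sec}\geq 0$ one gets $d_g(x_0,\eta(t))^2\leq L^2+t^2-2Lt\cos\theta$. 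Hence $d_g(x_0,\eta(L\cos\theta))\leq L\sin\theta<s$ as soon as $\theta<\arcsin(s/L)$. No degree or continuity argument is needed.

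\emph{Step 1.} Bujalo's bound $h_\tau\geq -C_0\tau$ gives $\nabla^2 f\geq -C_0 f$ only inside Walter's neighbourhood $U$. Along $\eta$ the error is therefore governed by $\max f\circ\eta$, not by $\tau$. The bootstrap from $(f\circ\eta)''\geq -C_0\, f\circ\eta$ closes only when $C_0\, d_g(x,y)^2$ is small. But the landing point from Step 2 sits at distance $L\cos\theta\approx\sqrt{L^2-s^2}$ from $x$, which is not small in general, and nothing keeps $\eta$ inside $U$.

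The clean fix is the paper's compactness argument. Assume $x_i\in\Sigma_{\tau_i}$ violate the bound by $\delta$, and pass to a limit $x_\infty\in\partial C_s$ with $\nu_i\to\nu_\infty$ and $\gamma_i\to\gamma_\infty$. Then $\nu_\infty$ is an exact outward support normal of $C_s$: it is the limit of the normals at the foot points $\Phi(x_i)$, which satisfy the first-variation inequality exactly. At $x_\infty$, total convexity of $C_s$ (which applies to all geodesics, not just minimizing ones, so it is fine that $\eta$ need not be minimizing) puts $\eta([0,L\cos\theta])$ inside $C_s$. Hence $\langle w,\nu_\infty\rangle\leq 0$. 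Your Step 3 trigonometry then gives $L\langle \gamma_\infty'(L),\nu_\infty\rangle\geq L\sin(\arcsin(s/L)-\epsilon)$ for every $\epsilon>0$, which is the desired contradiction.

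\emph{Comparison with the paper.} The paper reaches the same limit configuration but finishes with convexity of $b$ instead of comparison geometry. The normal $\nu_\infty$ to $C_s=\{b\leq s\}$ satisfies $\lambda\nu_\infty\in\partial b(x_\infty)$ for some $\lambda\in(0,1]$, since $b$ is $1$-Lipschitz. The subgradient inequality along $\gamma_\infty$ traversed backwards then gives $\lambda L\langle\nu_\infty,\gamma_\infty'(L)\rangle\geq b(x_\infty)-b(x_0)=s$. This is what your fallback was reaching for. The correct statement, though, is that the limiting normal is a positive multiple of a subgradient of the convex $b$, not that $\nu$ is close to $\nabla b$, which need not exist on $\partial C_s$. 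Your patched argument uses only that $C_s$ is totally convex and contains $B(x_0,s)$. It is thus a Riemannian ``support function $\geq$ inradius'' statement valid for any such convex set, at the price of invoking Toponogov. Both versions rest on the normalization $b(x_0)=0$; yours enters through $b\leq\rho$.
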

\begin{proof}[Proof of Claim~\ref{claim:lower-bdd-dini}] 
Suppose on the contrary, there is $\delta>0$, $\tau_i\to 0^+$, $x_i\in \Sigma_{\tau_i}$, outer normal $\nu_i$ and a minimizing from $x_0$ to $x_i$ whose terminal unit vector $v_i$ satisfies 
\begin{equation}
    d_g(x_0,x_i)\cdot \langle \nu_i,v_i\rangle\leq s-\delta.
\end{equation}

Denote $y_i:=\varphi_{\tau_i}(x_i)$. Then compactness implies $x_i,y_i\to x_\infty \in \partial C_s$, $\nu_i\to \nu_\infty$ and $v_i\to v_\infty$ where $\nu$ is an outward unit support normal to $C_s$ at $x_\infty$ and $v_\infty$ is the terminal direction of a minimizing geodesic $\gamma_\infty$ from $x_0=\gamma_\infty(0)$ to $x_\infty=\gamma_\infty(L)$ so that 
\begin{equation}\label{eqn:limit-contra}
     L\cdot \langle \nu_\infty,\gamma_\infty'(L)\rangle\leq s-\delta.
\end{equation}

At $x_\infty$, for some $\lambda\in (0,1]$ $\lambda\nu_\infty$ is a sub-gradient and thus $\lambda \langle \nu_\infty,\gamma_\infty'(L)\rangle \geq sL^{-1}$ which contradicts with \eqref{eqn:limit-contra}. This proves the claim.
\end{proof}

By Claim~\ref{claim:lower-bdd-dini}, \eqref{eqn:H-u-seq} and \eqref{eqn:defn-M-C}, we might let $\tau\to0$ to conclude Theorem~\ref{thm:mink}.
\end{proof}

\section{Proof of Theorem~\ref{thm:main}}

In this section, we show that a universal bound $8\pi$ of the average integral curvature
$$\hat k(x_0,r):=r^{-1}\int_{B_g(x_0,r)}\mathrm{scal}\,d\mathrm{vol}_g$$ will force $M^3$ to be non-collapsed at infinity. We isolated one direction of inequality in case of maximal volume growth, which holds under only $\Ric\geq 0$.
\begin{prop}\label{prop:MV-inte-R}
Suppose $(M^3,g)$ is a complete non-compact manifold such that $\Ric\geq 0$ and $\mathrm{AVR}(g)>0$, then for any $x_0\in M$, 
\begin{equation}
    \liminf_{r\to+\infty} \frac{1}{r}\int_{B_g(x_0,r)} \mathrm{scal}\,d\mathrm{vol}_g \geq 8\pi \left( 1-\mathrm{AVR}(g)\right)\geq 0.
\end{equation}
\end{prop}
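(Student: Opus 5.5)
We plan to prove Proposition~\ref{prop:MV-inte-R} with a Green-function exhaustion in place of the Busemann function. Under $\Ric\geq 0$ the Busemann function need not be proper or convex, but $\mathrm{AVR}(g)>0$ guarantees a positive minimal Green function $G$ with pole $x_0$. Set
\begin{equation}
b:=(4\pi G)^{-1},
\end{equation}
which is Colding's normalization in dimension three. Then $b$ is proper, $|\nabla b|\leq 1$ on $M\setminus\{x_0\}$ by Colding's sharp gradient estimate, and $b/\rho\to \mathrm{AVR}(g)$ as $\rho\to+\infty$. Since $\mathrm{scal}\geq 0$, the ball integral dominates any weighted integral over a sublevel set $\{b\leq t\}\subseteq B_g(x_0,r)$ with weight bounded by one. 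So it suffices to prove a lower bound for a $|\nabla b|$-weighted total scalar curvature on sublevel sets of $b$, and then compare radii.

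\textbf{Step 1: monotonicity on level sets.} On a regular level set $\Sigma_t=\{b=t\}$, combine the Bochner formula for the harmonic function $G$ with the traced Gauss equation
\begin{equation}
\mathrm{scal}=2K_{\Sigma}+2\Ric(\nu,\nu)+|h|^2-H^2,
\end{equation}
as in Stern's and Munteanu--Wang's level-set method. Integrating in $t$ should give the following identity, up to a nonnegative Hessian term:
\begin{equation}
\frac{d}{dt}\Big(\text{boundary term in } t^{-2}\textstyle\int_{\Sigma_t}|\nabla b|^2\Big)\approx \int_{\Sigma_t}\big(\mathrm{scal}-2K_{\Sigma_t}\big)|\nabla b|\,t^{-1}\cdots .
\end{equation}
Apply Gauss--Bonnet to $\Sigma_t$. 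Each component has $\chi\leq 2$, and by the maximum principle for harmonic functions $\Sigma_t$ is connected for a.e. $t$. Hence $\int_{\Sigma_t}K\leq 4\pi$. Integrating from $0$ to $t$, we expect
\begin{equation}
\int_{\{b\leq t\}}\mathrm{scal}\,|\nabla b|^2\,d\mathrm{vol}\;\geq\; 8\pi t-\frac{2}{t}\int_{\Sigma_t}|\nabla b|^3\,dA+o(t).
\end{equation}
The constants are to be fixed so that equality holds on $\mathbb{R}^3$. This is essentially the Green-weighted estimate of Chen--Xu--Zhang~\cite{CGZ2026}, and we will invoke it or reprove it along these lines.

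\textbf{Step 2: asymptotics of the boundary term.} By Colding's asymptotic estimates, $|\nabla b|\to\mathrm{AVR}(g)$ in the $L^2$-averaged sense on annuli. Together with $\int_{\Sigma_t}|\nabla b|\,dA=4\pi t^2$ (flux of $G$, suitably normalized), the boundary term should be $8\pi\,\mathrm{AVR}(g)^{\,\cdot}\,t+o(t)$, with the exponent arranged so that the final constant is $8\pi(1-\mathrm{AVR}(g))$ measured in $\rho$. We control the cubic term by $|\nabla b|\leq 1$ and dominated convergence along the blow-down to a metric cone.

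\textbf{Step 3: conversion to balls.} For $\varepsilon>0$ and $r$ large, $\{b\leq (1-\varepsilon)\mathrm{AVR}(g)\, r\}\subseteq B_g(x_0,r)$. Because $|\nabla b|^2\leq 1$ and $\mathrm{scal}\geq 0$, this gives $\int_{B_g(x_0,r)}\mathrm{scal}\geq\int_{\{b\leq t\}}\mathrm{scal}\,|\nabla b|^2$. Insert Step 1 and Step 2, then let $\varepsilon\to 0$. We must check carefully that the rescaling factor $\mathrm{AVR}(g)$ between $b$ and $\rho$ combines with the boundary term to give exactly $8\pi(1-\mathrm{AVR}(g))$ rather than a weaker constant. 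If it does not, we will instead use level sets of $\rho$ directly on the blow-down cone.

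The main obstacle is Step 2 together with the constant bookkeeping. The error term is a sharp boundary integral, and we need its precise asymptotics, not just a bound. The convergence $|\nabla b|\to\mathrm{AVR}(g)$ holds only in an integral sense. I would first try to handle it via Cheeger--Colding cone convergence, under which $b$ converges to the cone distance function scaled by $\mathrm{AVR}(g)$.
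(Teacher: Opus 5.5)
Your architecture (Colding's $b=(4\pi G)^{-1}$, a Green-weighted scalar curvature estimate on sublevel sets, then $\{b\le(1-\varepsilon)\mathrm{AVR}\,r\}\subseteq B_g(x_0,r)$) is the right one. However, Step 3 has a genuine gap, exactly at the bookkeeping you flagged.

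\textbf{The constant is lost in Step 3.} Dropping the weight with the global bound $|\nabla b|\le 1$ costs a factor of $\mathrm{AVR}^2$. The scalar curvature responsible for linear growth sits near infinity, where $|\nabla b|\approx\mathrm{AVR}(g)$. So $\int_{\{b\le t\}}\mathrm{scal}\,|\nabla b|^2\approx \mathrm{AVR}\cdot 8\pi(1-\mathrm{AVR})\,t$. With $t=(1-\varepsilon)\mathrm{AVR}\,r$ you only obtain $\liminf_{r} r^{-1}\int_{B_g(x_0,r)}\mathrm{scal}\ge 8\pi\,\mathrm{AVR}^2(1-\mathrm{AVR})$.

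The missing ingredient is Colding's asymptotic sharp gradient bound $\lim_{r\to\infty}\sup_{M\setminus B_g(x_0,r)}|\nabla b|=\mathrm{AVR}(g)$ \cite[Theorem 3.26]{Colding}. It has to be a pointwise bound. The $L^2$-averaged convergence in your Step 2 cannot compare $\int\mathrm{scal}\,|\nabla b|^k$ with $\int\mathrm{scal}$, because $\mathrm{scal}$ may concentrate where $|\nabla b|$ is large.

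With this bound, take the coarea-natural weight $|\nabla b|$ and invoke \cite[Theorem 1.5]{CGZ2026}, which gives $t^{-1}\int_{\{b\le t\}}\mathrm{scal}\,|\nabla b|\to 8\pi(1-\mathrm{AVR})$. Then estimate
\[
\int_{\{b\le(\mathrm{AVR}-\varepsilon)r\}}\mathrm{scal}\,|\nabla b|\le C_\varepsilon+(\mathrm{AVR}+\varepsilon)\int_{B_g(x_0,r)}\mathrm{scal},
\]
using only $|\nabla b|\le 1$ on a fixed compact set, which produces the constant $C_\varepsilon$. The factor $\mathrm{AVR}-\varepsilon$ from the level cancels the factor $\mathrm{AVR}+\varepsilon$ from the weight as $\varepsilon\to0$. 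This is exactly how the paper argues.

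\textbf{Steps 1--2 also fail as written.} The displayed inequality in Step 1 is false. Take $g=dr^2+f(r)^2g_{\mathbb{S}^2}$ with $f$ concave, $f(r)=r$ near $0$, and $f(r)=ar+c$ for $r\ge R$, where $0<a<1$ and $c>0$. This metric has $\mathrm{sec}\ge0$ and $\theta:=\mathrm{AVR}=a^2$. For $r\ge R$ one computes exactly $b=af$, $|\nabla b|=\theta$, and $\int_{\{r=s\}}\mathrm{scal}=8\pi(1-\theta)$.
\begin{itemize}
\item Your left side is therefore $8\pi\theta(1-\theta)\,t+O(1)$.
\item Your right side is $8\pi t-\frac{2}{t}\cdot 4\pi\theta^2t^2=8\pi(1-\theta^2)\,t$.
\item The right side exceeds the left by $8\pi(1-\theta)\,t$, which is not $o(t)$.
\end{itemize}
Fixing constants ``so that equality holds on $\mathbb{R}^3$'' determines nothing, since both sides vanish there. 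You should either cite \cite{CGZ2026} directly with the weight $|\nabla b|$, or redo the level-set computation for that weight. For this weight, coarea turns the integral into $\int_0^t\int_{\Sigma_s}\mathrm{scal}\,ds$, which is what Gauss--Bonnet on the level sets controls. The $|\nabla b|^2$ weight with a $|\nabla b|^3$ boundary term is not the right bookkeeping.
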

\begin{proof}
    
Let $u:=(4\pi G_{x_0})^{-1}$ and $G_{x_0}$ is the Green function with pole $x_0$, then \cite[Theorem 1.5]{CGZ2026} gives us 
\begin{equation}\label{eqn:Xu-estimate}
\left\{
\begin{array}{ll}
\lim_{t\to+\infty}t^{-1} J(t) = 8\pi(1-\mathrm{AVR}(g));\\[3mm]
 \displaystyle J(t):=\int_{\{u\leq t\}}\mathrm{scal}\cdot |\nabla u|\,d\mathrm{vol}_g.
\end{array}
\right.
\end{equation}
\medskip

On the other hand, \cite[Theorem 3.26]{Colding} implies 
\begin{equation}
    \lim_{r\to+\infty} \sup_{M\setminus B_g(x_0,r)} |\nabla u|=\mathrm{AVR}(g).
\end{equation}
Together with \cite[(3.37)]{ColdingMinicozzi}, for $\e>0$, we choose $s_\e>0$ such that outside $B_g(x_0,s_\e)$,
\begin{equation}
    u(x)\geq (\mathrm{AVR}-\e)d_g(x,x_0),\;\; |\nabla u|\leq \mathrm{AVR}+\e.
\end{equation}
Then $\{u<(\mathrm{AVR}-\e)r\}\subset B_g(x_0,r)$ for all large $r$ and hence, 
\begin{equation}\label{eqn:J}
\begin{split}
r^{-1}J\left( (\mathrm{AVR}-\e)r\right)\leq C_\e r^{-1}+ (\mathrm{AVR}+\e)r^{-1}\int_{B_g(x_0,r)}\mathrm{scal}\,d\mathrm{vol}_g.
\end{split}
\end{equation}

Result follows by letting $r\to+\infty$ on \eqref{eqn:J} using also \eqref{eqn:Xu-estimate} and followed by $\e\to 0$.
\end{proof}

We now proceed to prove the opposite direction. 
\begin{proof}[Proof of Theorem~\ref{thm:main}] 
By Soul Theorem, the soul $\mathcal{S}$ is a point as the manifold is topologically Euclidean. We fix $x_0\in M$ and consider the Busemann function $b$ on $M$. 

By Lemma~\ref{lma:dist}, for any $\e>0$, there exists $R_\e>0$ such that $(1+\e)^{-1}\rho\leq b$ for all $\rho>R_\e$. In particular for all $s$ large, $
    C_s\subseteq B_g(x_0, (1+\e)s)$,
and hence
\begin{equation}\label{eqn:vol-low}
   \int^s_0 A(t)\,dt \leq  \mathrm{Vol}_g(C_s)\leq \mathrm{Vol}_g\left( x_0,(1+\e)s \right)
\end{equation}
by Lemma~\ref{lma:vol-ineq}, where $A(t):=\mathcal{H}^2(\partial C_t)$. We also assume $R_\e$ is large enough such that
\begin{equation}\label{eqn:alpha-bdd}
    \frac1r \int_{B_g(x_0,r)}\mathrm{scal}\, d\mathrm{vol}_g< \a+\e <8\pi
\end{equation}
for all $r>R_\e$.

By Lemma~\ref{lma:in-rad}, Lemma~\ref{lma:Gauss-Bonnet} and Proposition~\ref{prop:Bujalo-ineq},  we have 
\begin{equation}
    \begin{split}
      8\pi t &\leq  4\pi\chi(\partial C_t) \cdot t_0(C_t)\\
      &\leq \mathcal{M}(\partial C_t)+\int_{C_t}\mathrm{scal}\,d\mathrm{vol}_g
    \end{split}
\end{equation}

Hence Theorem~\ref{thm:mink} implies
\begin{equation}\label{eqn:loww}
    \begin{split}
 4\pi t^2-\frac12 t\int_{C_t}\mathrm{scal}\,d\mathrm{vol}_g    \leq \frac12 t\cdot \mathcal{M}(\partial C_t)   \leq A(t)
    \end{split}
\end{equation}

Combines \eqref{eqn:loww} with \eqref{eqn:vol-low}, we conclude 
\begin{equation}\label{eqn:VOLL}
    \begin{split}
    \omega_3 s^3 \leq   \mathrm{Vol}_g\left( x_0,(1+\e)s \right)+\frac12 \int^s_0 \int_{C_t}t\cdot \mathrm{scal}\,d\mathrm{vol}_g
    \end{split}
\end{equation}
for all $s$ large. 

For the last integral, we split it into 
\begin{equation}
    \begin{split}
  \frac12 \int^s_0 \int_{C_t}t\cdot \mathrm{scal}\,d\mathrm{vol}_g&=\frac12 \left(\int^s_{R_\e}+\int_0^{R_\e}\right) \int_{C_t}t\cdot \mathrm{scal}\,d\mathrm{vol}_g\\
    &=:\mathbf{I}+\mathbf{II}
    \end{split}
\end{equation}
where $\mathbf{II}=o(s^3)$ for each fixed $\e>0$. For $\mathbf{I}$, we use \eqref{eqn:alpha-bdd} so that 
\begin{equation}
    \begin{split}
        \mathbf{I}&\leq \frac1{2} \int^s_{R_\e}  t\,\int_{B_g(x_0,(1+\e)t)}\mathrm{scal}\,d\mathrm{vol}_g\\
        &\leq  \frac1{2} (\a+\e) (1+\e)   \int^s_{R_\e} t^2\,dt\leq \frac1{6} (\a+\e) (1+\e)   s^3.
    \end{split}
\end{equation}

And hence \eqref{eqn:VOLL} is reduced to 
\begin{equation}
    1\leq \frac{\mathrm{Vol}_g\left( x_0,(1+\e)s \right)}{\omega_3s^3}+\frac1{6\omega_3} (\a+\e) (1+\e)   +o(1)
\end{equation}
for all $s\to+\infty$. By volume comparison, we pass $s\to+\infty$ and followed by $\e\to 0$ to show that
\begin{equation}
 1-\frac \a{6\omega_3}\leq \mathrm{AVR}(g):=\lim_{s\to+\infty}\frac{\mathrm{Vol}_g\left( x_0, s \right)}{\omega_3s^3}.
\end{equation}
The equality follows from Proposition~\ref{prop:MV-inte-R}.
\end{proof}

The dichotomy analogy of Theorem~\ref{thm:Kahler} is now immediate. 
\begin{proof}[Proof of Corollary~\ref{cor:vol-decay-dich}]
If (i) is true, then (ii) follows directly from Petrunin \cite{Petrunin2008}. The difficult part is to prove the opposite direction. By the work of Schoen-Yau \cite{SchoenYau}, $M^3$ is diffeomorphic to $\mathbb{R}^3$. Suppose $\mathrm{AVR}(g)=0$, then Bishop-Gromov volume comparison yields 
\begin{equation}
    \begin{split}
    \frac1r\int_{B_g(x_0,r)} \mathrm{scal}\,d\mathrm{vol}_g 
    &\leq \frac{\mathrm{Vol}_g(x_0,r)}{r^3}\cdot r^2\fint_{B_g(x_0,r)} \mathrm{scal}\,d\mathrm{vol}_g.
    \end{split}
\end{equation}

By letting $r\to+\infty$, this shows that $\a=0$ in Theorem~\ref{thm:main} and hence $\mathrm{AVR}>0$. This is impossible.
\end{proof}

\section{Curvature growth of non-flat manifold }

In Theorem~\ref{thm:main}, we see how integral curvature bound give rise to non-collapsing. We now show that if in addition $\Ric(x_0)>0$ somewhere, then $\hat k(x_0,r)$ growth at least linearly. Indeed, we will prove the estimates for general $n\geq 3$.

\begin{lma}\label{lma:T}
Suppose $\mathrm{sec}\geq0$, then the tensor $$T:=\frac12\mathrm{scal}\cdot g-\Ric$$ is non-negative and is divergent free.
\end{lma}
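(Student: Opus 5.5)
The statement has two parts, and I will prove them separately: divergence-freeness is a consequence of the contracted second Bianchi identity, and non-negativity is a pointwise algebraic statement about sectional curvatures.

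\emph{Divergence-free.} I will use the contracted Bianchi identity $\operatorname{div}\Ric=\frac12 d\,\mathrm{scal}$, which holds in any dimension. Since $g$ is parallel, $\operatorname{div}(\mathrm{scal}\cdot g)=d\,\mathrm{scal}$. Therefore
\begin{equation}
\operatorname{div}T=\tfrac12 d\,\mathrm{scal}-\tfrac12 d\,\mathrm{scal}=0.
\end{equation}
This step uses no curvature sign assumption.

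\emph{Non-negativity.} I fix a point $p$ and a unit vector $e_1\in T_pM$, and extend it to an orthonormal basis $\{e_i\}_{i=1}^n$. The plan is to expand both terms of $T(e_1,e_1)$ in sectional curvatures $K_{ij}:=\mathrm{sec}(e_i,e_j)$:
\begin{equation}
\mathrm{scal}=\sum_{i\neq j}K_{ij}=2\sum_{i<j}K_{ij},\qquad \Ric(e_1,e_1)=\sum_{j\geq 2}K_{1j}.
\end{equation}
Subtracting gives
\begin{equation}
T(e_1,e_1)=\sum_{i<j}K_{ij}-\sum_{j\ge 2}K_{1j}=\sum_{2\le i<j\le n}K_{ij}\ \ge 0,
\end{equation}
because $\mathrm{sec}\geq 0$. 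The unit vector $e_1$ was arbitrary, so the symmetric tensor $T$ is positive semi-definite. Geometrically, $T(e_1,e_1)$ is half the scalar curvature of the distribution $e_1^\perp$ computed with the ambient curvature tensor, i.e. the Einstein tensor with a sign flip.

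There is no real obstacle here; the only care needed is bookkeeping, namely that each unordered pair is counted twice in $\mathrm{scal}$. I also note that in dimension $3$ the sum reduces to the single term $T(e_1,e_1)=K_{23}$, the sectional curvature of the plane orthogonal to $e_1$. This is the form I expect to be used later, presumably by pairing $T$ with the Hessian of a convex function, in the proof of Theorem~\ref{thm:scalar-growth}.
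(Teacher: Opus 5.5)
Your proof is correct and follows the same route as the paper. The paper simply cites the contracted Bianchi identity for $\operatorname{div}T=0$ and Chan--Lee's Lemma 3.2 for $T\ge 0$, and your expansion $T(e_1,e_1)=\sum_{2\le i<j\le n}K_{ij}$ is precisely the content of that cited lemma. The same expansion also gives the quantitative bound $T\ge \frac12\varepsilon(n-1)(n-2)\,g$ under $\mathrm{sec}\ge\varepsilon$, which the paper invokes later in Proposition~\ref{prop:c_1-local}.
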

\begin{proof}
This follows from \cite[Lemma 3.2]{ChanLee2025} and Bianchi identity.
\end{proof}

To ease the regularity issue caused in differentiating the Busemann function $b$, we use the approximation method of Greene-Wu \cite{GreeneWu}:
\begin{lma}\label{lma:GreeneWu}
There exists $\e_i\to 0^+$ and a sequence of smooth function $b_i\in C^\infty(M)$ such that 
\begin{enumerate}
    \item $|b_i-b|\leq \e_i$;
    \item $|\nabla b_i|\leq 1$;
    \item $\nabla^2 b_i\geq -\e_i g$ on $M$.
\end{enumerate}
\end{lma}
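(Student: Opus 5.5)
The plan is to build the smooth approximations in two stages. First we take a Greene--Wu type Riemannian convolution of $b$ with a \emph{position-dependent} smoothing scale. This produces functions that are $C^0$-close to $b$ and almost convex, but only almost $1$-Lipschitz, with a gradient error that is allowed to decay along the exhaustion by $C_\ell$. Second, we post-compose with a concave-free (that is, convex) reparametrization $\phi$, which pushes the gradient bound down to $1$ while keeping global $C^0$-closeness and the Hessian lower bound.

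\emph{Step 1 (local smoothing).} Fix $i$ and a positive, non-increasing, continuous function $\delta:\mathbb{R}\to(0,\e_i]$ with $\int_{-\infty}^{+\infty}\delta(t)\,dt\le \e_i$ (for instance, summable in $t\ge0$ and constant for $t\le 1$). Cover $M$ by the compact annuli $C_{k+1}\setminus \mathrm{int}\,C_{k-1}$, which are compact by Lemma~\ref{lma:property-buse}, and take a subordinate partition of unity $\{\psi_k\}$. On a neighbourhood of each annulus we have uniform bounds on curvature and injectivity radius. We form
\begin{equation}
b_{\eta}(x):=\int_{T_xM} b(\exp_x v)\,\kappa_{\eta}(|v|)\,dv,
\end{equation}
where $\kappa_\eta$ is a standard mollifier at scale $\eta$, and then set $\tilde b:=\sum_k \psi_k\, b_{\eta_k}$. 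Because $b$ is convex and $1$-Lipschitz, the standard Greene--Wu estimates give, on the $k$-th annulus and for $\eta_k$ small enough (depending on the local geometry and on $\psi_k$), the following three bounds:
\begin{equation}
|\tilde b-b|\le \delta(k+1),\qquad |\nabla\tilde b|\le 1+\delta(k+1),\qquad \nabla^2\tilde b\ge -\delta(k+1)\,g .
\end{equation}
Here the gradient error comes from the Jacobian of $\exp_x$ and parallel transport, which are $O(\eta_k)$. The cross terms $\nabla\psi_k\,(b_{\eta_k}-b_{\eta_{k'}})$ and $\nabla^2\psi_k\,(b_{\eta_k}-b_{\eta_{k'}})$ are controlled because $b_{\eta_k}\to b$ in $C^0$ as $\eta_k\to0$. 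Since $\delta$ is non-increasing and $|b-(k+1)|\le 2$ on the $k$-th annulus, after shifting $\delta$ by a constant we obtain the global bounds
\begin{equation}
|\tilde b-b|\le\delta(b),\qquad |\nabla\tilde b|\le 1+\delta(\tilde b+3),\qquad \nabla^2\tilde b\ge-\e_i g .
\end{equation}

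\emph{Step 2 (reparametrization).} Define $\phi(t):=\int_0^t \frac{ds}{1+\delta(s+3)}$ and set $b_i:=\phi(\tilde b)+c_i$ for a suitable constant $c_i$.
\begin{enumerate}
\item[(2)] Gradient: $|\nabla b_i|=\phi'(\tilde b)\,|\nabla\tilde b|\le 1$.
\item[(3)] Hessian: since $\delta$ is non-increasing, $\phi''\ge0$ and $0<\phi'\le1$. Hence
\begin{equation}
\nabla^2 b_i=\phi'\nabla^2\tilde b+\phi''\,d\tilde b\otimes d\tilde b\ge-\e_i g .
\end{equation}
\item[(1)] $C^0$-closeness: $|\phi(t)-t|\le\int_{\mathbb{R}}\delta\le\e_i$ for all $t$, so choosing $c_i=0$ gives $|b_i-b|\le|\phi(\tilde b)-\tilde b|+|\tilde b-b|\le 2\e_i$. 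Replacing $\e_i$ by $2\e_i$ yields all three properties.
\end{enumerate}

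\emph{Main obstacle.} The delicate point is Step~1. We must make the Greene--Wu error bounds hold with the prescribed decaying error function $\delta$ \emph{uniformly} across the non-compact manifold, including the gradient bound and the cross terms coming from the partition of unity. The approach I would try first is to invoke Greene--Wu's theorem that any continuous function with a given local lower Hessian bound admits, for every positive continuous error function, a smooth approximation within that error in $C^0$, and with Hessian bound and Lipschitz constant degraded by at most that error. Then I would simply feed it the error function $\delta\circ b$. The summability of $\delta$ is exactly what makes the global $C^0$ bound survive the rescaling in Step~2; a constant rescaling would fail, since $b$ is unbounded.
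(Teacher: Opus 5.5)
Your argument arrives at the same place as the paper, and its core is the same. The paper's proof is a one-line appeal to Greene--Wu's smoothing theorem for the convex, $1$-Lipschitz function $b$, and it reads all three properties off that citation.

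What you add is Step~2, which makes the argument more robust. Smoothing-plus-gluing constructions typically give only $|\nabla\tilde b|\le 1+\delta$, and since $b$ is unbounded you cannot just divide by $1+\e_i$. Your convex reparametrization, with $\phi'=(1+\delta)^{-1}$ and $\delta$ integrable, gives exactly $|\nabla b_i|\le 1$. It also keeps $\nabla^2 b_i\ge-\e_i g$ (because $\phi''\ge0$ and $0<\phi'\le 1$) and a global $C^0$ bound. For the way the lemma is used later, e.g.\ compactness of $\nabla b_i(x_1)$ and the Lipschitz bound in Proposition~\ref{prop:c_1-local}, a bound $1+\e_i$ would already suffice, so the paper's shortcut costs nothing there.

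Some bookkeeping needs fixing:
\begin{itemize}
\item A positive non-increasing $\delta$ cannot be integrable on all of $\mathbb{R}$. You only need integrability on a half-line, since $\tilde b\ge-\e_i$.
\item $\delta$ must be smooth for $b_i$ to be $C^\infty$.
\item From $k+1\ge b\ge \tilde b-\e_i$ you get $\delta(k+1)\le\delta(\tilde b-1)$, not $\delta(\tilde b+3)$. So the shift goes the other way: take $\phi'(t)=(1+\delta(t-1))^{-1}$, or use local errors $\delta(k+5)$.
\end{itemize}

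The self-contained justification you sketch for Step~1, however, has a hole. Expanding
\begin{equation*}
\nabla^2\Big(\sum_k\psi_k b_{\eta_k}\Big)=\sum_k\psi_k\nabla^2 b_{\eta_k}+\sum_k\big(d\psi_k\otimes db_{\eta_k}+db_{\eta_k}\otimes d\psi_k\big)+\sum_k b_{\eta_k}\nabla^2\psi_k ,
\end{equation*}
the middle term becomes $\sum_k d\psi_k\odot d(b_{\eta_k}-b_{\eta_{k'}})$ after using $\sum_k d\psi_k=0$. You did not list this term, and $C^0$ convergence $b_\eta\to b$ does not control it.

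Near points where $b$ is not differentiable, $\nabla b_\eta$ changes by the full jump of $\nabla b$ across a layer of width $\sim\eta$. So mollifications at two different scales have gradients differing by an amount comparable to that jump. The cross term is therefore of size about $|d\psi_k|$ times the jump, not $O(\delta(k+1))$. One might try to absorb it into $\sum_k\psi_k\nabla^2 b_{\eta_k}$, which is large exactly there, but that requires a real argument you have not given.

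So Step~1 must rest on Greene--Wu's global approximation theorem itself, which is your stated fallback and what the paper does. Alternatively, redo it without a partition of unity, e.g.\ by a single Riemannian convolution whose radius $\eta(x)$ varies slowly in $C^2$.
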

\begin{proof}
It follows from the convexity of $b$, \cite[Proposition 2.3]{GreeneWu} and Lemma~\ref{lma:dist}.
\end{proof}

Now we use $b_i$ to approximate $b_\infty:=b$ and consider a signed measure on $M$ as follows. We let $\phi(s):=s+\sqrt{1+s^2}$, $f_i:=\phi(b_i)$ and $X_i:=T^\sharp \nabla f_i$. 

\begin{lma}\label{lma:almost-nonneg-measure}
The function $\mathrm{div}(X_i)$ satisfies 
\begin{equation}
    \mathrm{div}(X_i)\geq -o_{loc}(1), \;\text{as}\;\;i\to+\infty.
\end{equation}
\end{lma}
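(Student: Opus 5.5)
Since $T$ is divergence free by Lemma~\ref{lma:T}, I will expand $\mathrm{div}(X_i)$ pointwise. In local coordinates
\begin{equation}
\mathrm{div}(X_i)=\nabla_j\big(T^{jk}\nabla_k f_i\big)=(\nabla_j T^{jk})\nabla_k f_i+T^{jk}\nabla_j\nabla_k f_i=\langle T,\nabla^2 f_i\rangle .
\end{equation}
So everything reduces to a lower bound for $\langle T,\nabla^2 f_i\rangle$. By the chain rule,
\begin{equation}
\nabla^2 f_i=\phi'(b_i)\nabla^2 b_i+\phi''(b_i)\,db_i\otimes db_i .
\end{equation}

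For $\phi(s)=s+\sqrt{1+s^2}$ we have $\phi'(s)=1+s(1+s^2)^{-1/2}\in(0,2)$ and $\phi''(s)=(1+s^2)^{-3/2}>0$. This is the reason for this choice of $\phi$: it is increasing and convex on all of $\mathbb{R}$, which matters because $b_i$ may be slightly negative near the minimum set of $b$. Since $T\geq 0$ by Lemma~\ref{lma:T}, the second term gives $\phi''(b_i)\,T(\nabla b_i,\nabla b_i)\geq 0$.

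For the first term, Lemma~\ref{lma:GreeneWu}(3) gives $\nabla^2 b_i+\e_i g\geq 0$. Pairing this with $T\geq 0$ yields
\begin{equation}
\langle T,\nabla^2 b_i\rangle\geq -\e_i\,\tr_g T=-\e_i\,\tfrac{n-2}{2}\,\mathrm{scal}.
\end{equation}
Multiplying by $\phi'(b_i)\in(0,2)$, we obtain
\begin{equation}
\mathrm{div}(X_i)\geq -(n-2)\,\e_i\,\mathrm{scal}.
\end{equation}
On any compact set $K$, $\mathrm{scal}$ is bounded because $g$ is smooth. Hence $\mathrm{div}(X_i)\geq -C_K\e_i\to 0$ uniformly on $K$, which is exactly the statement $-o_{loc}(1)$.

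The only point needing care is the trace inequality $\langle T,A\rangle\geq 0$ for $T\geq 0$ and $A\geq 0$. It holds because it is the trace of a product of two positive semidefinite matrices, computed in an orthonormal frame diagonalizing $T$. Everything is smooth since $b_i\in C^\infty(M)$, so no barrier or distributional argument is needed. I do not foresee a real obstacle; the boundedness of $\phi'$ is what keeps the error term uniform.
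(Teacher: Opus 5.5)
Your proof is correct and follows the paper's argument. Like the paper, you expand $\mathrm{div}(X_i)=\langle T,\nabla^2 f_i\rangle$ using $\mathrm{div}\,T=0$, drop the nonnegative term $\phi''(b_i)\,T(\nabla b_i,\nabla b_i)$, and use $\nabla^2 b_i\geq -\e_i g$ together with $\tr_g T=(\frac n2-1)\mathrm{scal}$. The one cosmetic difference is that you bound $\phi'(b_i)\le 2$, giving the uniform error $-(n-2)\e_i\,\mathrm{scal}$, whereas the paper writes the weaker factor $\phi(b_i)$ and then needs local boundedness of $b_i$; that factor is also valid, since $\phi'(s)=\phi(s)/\sqrt{1+s^2}\le\phi(s)$.
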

\begin{proof}
By direct computation and Lemma~\ref{lma:T}, 
    \begin{equation}\label{eqn:div-almostnonnega}
        \begin{split}
            \mathrm{div}(X_i)&=\langle T,\nabla^2f_i\rangle\\
            &=\phi'(b_i)\cdot  \langle T,\nabla^2 b_i\rangle+\phi''(b_i) \cdot T(\nabla b_i,\nabla b_i)\\
            &\geq -\e_i\left( \frac{n}{2}-1\right)  \cdot \phi(b_i) \cdot \mathrm{scal}+\phi''(b_i) \cdot T(\nabla b_i,\nabla b_i)\\
            &\geq -\e_i\left( \frac{n}{2}-1\right)  \cdot \phi(b_i) \cdot \mathrm{scal},
        \end{split}
    \end{equation}
where we have used Lemma~\ref{lma:GreeneWu}, Lemma~\ref{lma:T} and $\tr_g T=(\frac{n}2-1)\mathrm{scal}$. This completes the proof by Lemma~\ref{lma:GreeneWu}.
\end{proof}

\medskip
\begin{prop}\label{prop:non-vanishmeasure}
Suppose $(M,g)$ is a complete non-compact manifold such that $\mathrm{sec}\geq 0$ and $\mathrm{rank}(\Ric(x_0))\geq 3$ for some $x_0\in M$, then for any $r>0$ we have 
$$\displaystyle\liminf_{i\to+\infty} \int_{B_g(x_0,r)} \mathrm{div}(X_i)\,d\mathrm{vol}_g>0.$$
\end{prop}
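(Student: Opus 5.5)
From the computation in Lemma~\ref{lma:almost-nonneg-measure} we have, pointwise on $B_g(x_0,r)$,
\begin{equation}
\mathrm{div}(X_i)\ \ge\ -\e_i\left(\tfrac n2-1\right)\phi(b_i)\,\mathrm{scal}+\phi''(b_i)\,T(\nabla b_i,\nabla b_i).
\end{equation}
The first term is $O(\e_i)$ uniformly on the compact ball, so its integral tends to $0$. It therefore suffices to show
\begin{equation}
\liminf_{i\to+\infty}\int_{B_g(x_0,r)}\phi''(b_i)\,T(\nabla b_i,\nabla b_i)\,d\mathrm{vol}_g>0.
\end{equation}
We have $\phi''(s)=(1+s^2)^{-3/2}>0$, and $b_i\to b$ uniformly, so $\phi''(b_i)$ is bounded below by a positive constant $c_r$ on $B_g(x_0,r)$. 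This reduces the claim to a uniform lower bound for $\int_{B} T(\nabla b_i,\nabla b_i)$ on a small ball $B=B_g(x_0,\delta)$, $\delta<r$. Since $T\ge0$ by Lemma~\ref{lma:T}, restricting to $B$ only decreases the integral.

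\textbf{Positivity of $T$ near $x_0$.} The rank condition should make $T$ positive definite at $x_0$. Let $\lambda_1\le\dots\le\lambda_n$ be the eigenvalues of $\Ric(x_0)$ and $e$ a unit vector. Then $T(e,e)=\frac12\mathrm{scal}-\Ric(e,e)$ is a sum of sectional curvatures $K(e_a,e_b)$ over pairs $a<b$ with $a,b\ne1$, in an orthonormal frame with $e_1=e$. This quantity is the scalar curvature of $e^\perp$ computed with the ambient curvature. Suppose $T(e,e)=0$. Then every $2$-plane in $e^\perp$ has zero curvature. Since $\mathrm{sec}\ge0$, each such flat plane makes the curvature operator vanish on it: $R(\cdot,a,b,\cdot)$ has no component along those directions. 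So $\Ric$ restricted to $e^\perp$ only sees curvature in planes containing $e$, giving $\Ric(v,v)=K(e,v)|v|^2$ for $v\perp e$.

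Now take $v,w\in e^\perp$. By the first-variation argument, a flat plane $\mathrm{span}(v,w)$ in $\mathrm{sec}\ge0$ forces $R(v,w)=0$ as an operator. Hence $R(e,v,w,e)=0$ for $v\perp w$ in $e^\perp$, so $\Ric|_{e^\perp}$ is diagonal in every orthonormal basis of $e^\perp$, i.e. $\Ric|_{e^\perp}=\mu g$. Also $\Ric(e,e)=\sum_a K(e,e_a)=(n-1)\mu$.

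One more consequence of $\mathrm{sec}\ge 0$ is needed: for $n\ge3$ and independent $v,w\in e^\perp$, the plane $\mathrm{span}(e+v,w)$ has curvature $K(e,w)+K(v,w)+2R(e,w,w,v)$, while $\mathrm{span}(e-v,w)$ has $K(e,w)+K(v,w)-2R(e,w,w,v)$. Combined with $K(v,w)=0$ and $R(e,w,w,v)=0$, this does not yet force $K(e,w)=0$. So this route will likely not close by itself, and I expect this step to be the main obstacle. My fallback is an elementary eigenvalue inequality: under $\mathrm{sec}\ge0$ the sum $T(e,e)$ of $\binom{n-1}{2}$ curvatures should dominate a combination of the $\lambda_j$ that is positive exactly when $\mathrm{rank}\,\Ric\ge3$. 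The prototype is $n=3$, where $T(e,e)=K(e_2,e_3)\ge0$ and $2K(e_2,e_3)=\lambda_2+\lambda_3-\lambda_1$ in an eigenbasis. I would try to prove $T(e,e)\ge c(\lambda_1+\lambda_2+\lambda_3)$ minus nonnegative corrections, possibly via a nonnegative-curvature-operator-style averaging over triples of eigenvectors. Once $T(x_0)\ge 2\kappa g$ with $\kappa>0$, continuity gives $T\ge\kappa g$ on some $B_g(x_0,\delta)$.

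\textbf{Lower bound for $|\nabla b_i|$ in $L^2$.} Given $T\ge\kappa g$ on $B$, it remains to show $\liminf_i\int_B|\nabla b_i|^2>0$. By Lemma~\ref{lma:GreeneWu}, $b_i\to b$ uniformly. Also $|\nabla b|=1$ a.e. by Lemma~\ref{lma:vol-ineq}, and $b$ is nonconstant on $B$. Lower semicontinuity of the Dirichlet energy under $L^2$ convergence (the $\nabla b_i$ are bounded in $L^\infty$, hence converge weakly in $L^2$ to $\nabla b$ along subsequences) gives
\begin{equation}
\liminf_i\int_B|\nabla b_i|^2\ \ge\ \int_B|\nabla b|^2=\mathrm{Vol}(B)>0.
\end{equation}
Combining the three steps,
\begin{equation}
\liminf_i\int_{B_g(x_0,r)}\mathrm{div}(X_i)\ \ge\ c_r\,\kappa\,\mathrm{Vol}_g(B)>0.
\end{equation}
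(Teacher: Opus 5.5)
The step you flagged as the main obstacle is not merely hard; it is false. Under $\mathrm{sec}\ge 0$ and $\mathrm{rank}(\Ric(x_0))\ge 3$, the tensor $T(x_0)$ need not be positive definite. So neither your frame argument nor any fallback eigenvalue inequality can give $T(x_0)\ge 2\kappa g$.

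Your own three-dimensional formula shows this. Take a curvature operator with eigenvalues $(0,1,1)$. It is nonnegative, hence $\mathrm{sec}\ge 0$. Its Ricci eigenvalues are $(\lambda_1,\lambda_2,\lambda_3)=(2,1,1)$, so $\Ric>0$, yet $2T(e_1,e_1)=\lambda_2+\lambda_3-\lambda_1=0$. This is realized by the Berger sphere whose curvature operator has eigenvalues $0,\tfrac43,\tfrac43$. In the complete non-compact setting of the statement, it is realized by that sphere times $\mathbb{R}$: there $\mathrm{rank}\,\Ric=3$ everywhere, yet $T$ has a kernel everywhere. Your intermediate claim $\Ric|_{e^\perp}=\mu g$ also fails in general, e.g. for curvature operator eigenvalues $(0,1,2)$.

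What the rank hypothesis does give is $\dim\ker T(x)\le 1$. Indeed, if $T(e_1)=T(e_2)=0$, then $R_{11}+R_{22}=\mathrm{scal}$, so $R_{kk}=0$ for $k\ge 3$ and $\mathrm{rank}\,\Ric\le 2$.

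Once $T$ may have a one-dimensional kernel, the term $\phi''(b_i)T(\nabla b_i,\nabla b_i)$ that you keep is not enough by itself. We have $|\nabla b_i|\le 1=|\nabla b|$ a.e. and $\nabla b_i\rightharpoonup\nabla b$, so the convergence is in fact strong in $L^2_{loc}$. Hence your reduction succeeds exactly when $\int_{B_g(x_0,r)}T(\nabla b,\nabla b)\,d\mathrm{vol}_g>0$. Nothing in your proposal rules out $\nabla b\in\ker T$ a.e. near $x_0$.

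The paper handles exactly this degenerate case, using the term $\phi'(b_i)\langle T,\nabla^2 b_i\rangle$ that you discarded. The argument runs as follows.
\begin{itemize}
\item It argues by contradiction and picks a differentiability point $x_1$ of $b$ near $x_0$ with $T(\nabla b,\nabla b)(x_1)=0$.
\item It takes a smooth unit field $V$ spanning $\ker T$ near $x_1$. Upper semicontinuity of the subdifferential of the convex function $b$ then gives $\nabla b=V$ a.e. near $x_1$, so $b$ is smooth there.
\item Suppose also $\langle T,\nabla^2 b\rangle\equiv 0$. Then $\nabla^2 b\ge 0$ and $\ker T=\mathrm{span}(V)$ force $\nabla^2 b=\lambda V\otimes V$, and $|\nabla b|=1$ forces $\lambda=0$.
\item Thus $\nabla b$ is parallel and $\Ric(\nabla b,\nabla b)=0$. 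This contradicts $\Ric(\nabla b,\nabla b)=\tfrac12\mathrm{scal}>0$.
\item The resulting positivity of $\mathrm{div}(X_\infty)$ near $x_1$ passes to the $X_i$ through $\int\mathrm{div}(X_i)\varphi=\int f_i\langle T,\nabla^2\varphi\rangle$.
\end{itemize}

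Your final weak-lower-semicontinuity step is correct, and it is the paper's argument in Proposition~\ref{prop:c_1-local}. However, it proves the statement only under the stronger hypothesis $T(x_0)>0$, for instance $\mathrm{sec}>0$ near $x_0$.
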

\begin{proof}
Suppose the conclusion fails, by Lemma~\ref{lma:almost-nonneg-measure} we might assume
\begin{equation}\label{eqn:divv-seq}
    \lim_{i\to+\infty} \int_{B_g(x_0,r)} \mathrm{div}(X_i)\,d\mathrm{vol}_g=0,
\end{equation}
after passing to subsequence.

By \eqref{eqn:div-almostnonnega}, Lemma~\ref{lma:T} and Lemma~\ref{lma:GreeneWu},  
\begin{equation}\label{eqn:L1-T}
 \lim_{i\to+\infty}   \int_{B_g(x_0,r)}  T(\nabla b_i,\nabla b_i)\,d\mathrm{vol}_g=0.
\end{equation}

By Lemma~\ref{lma:vol-ineq}, we let $U\subseteq B_g(x_0,r)$ be a subset such that $b=b_\infty$ is differentiable on $U$ and $\mathrm{Vol}_g\left(B_g(x_0,r)\setminus U \right)=0$. Using \eqref{eqn:L1-T},  Lemma~\ref{lma:T}, full measure of $U$ and semi-continuity of $\mathrm{rank}(\Ric)$, we choose $x_1\in U$ so that 
\begin{enumerate}
    \item[(a)]$\lim_{i\to+\infty}T(\nabla b_i,\nabla b_i)(x_1)=0$;
    \item[(b)] $\mathrm{rank}(\Ric(x_1))\geq 3$.
\end{enumerate}

At $x_1$, $\nabla b_i\in T_{x_1}M$ such that $|\nabla b_i|(x_1)\leq 1$ by Lemma~\ref{lma:GreeneWu}. 
By passing to subsequence, we might assume $\nabla b_i(x_1)\to Y$ as $i\to+\infty$, for some vector $Y\in T_{x_1}M$. In particular, either $Y=0$ or $Y\in \mathrm{Ker}(T(x_1))$.  On the other hand, using the almost convexity of $b_i$ from Lemma~\ref{lma:GreeneWu} and $C^0$ convergence of $b_i$ to $b_\infty$, we conclude that for any geodesic $\gamma$ starting from $\gamma(0)=x_1$, we have 
\begin{equation}
b(\gamma(t))\geq   \langle Y,\gamma'(0) \rangle \cdot t +b(x_1).
\end{equation}
In particular, $Y$ is a sub-gradient of $b$ at $x_1$. Since we have assumed $b$ is differentiable at $x_1$, $Y=\nabla b(x_1)$ and hence $Y\neq0$ since $|\nabla b|(x_1)=1$ by Lemma~\ref{lma:vol-ineq}.

\begin{claim}\label{claim:null}
We have $\mathrm{Null}(T(x_1))\leq 1$.
\end{claim}
\begin{proof}[Proof of Claim~\ref{claim:null}]
    Suppose on the contrary, there is an orthonormal frame $\{e_i\}_{i=1}^n$ in $T_{x_1}M$ such that $T(e_1)=T(e_2)=0$. Then we have $R_{11}=R_{22}=\frac12 \mathrm{scal}$ and hence $\mathrm{rank}(\Ric)=2$ which is impossible.
\end{proof}

By Claim~\ref{claim:null}, we might choose a smooth vector field $V$ on a neighborhood $W$ of $x_1$ such that $\mathrm{Ker}(T)=\mathrm{span}\{V\}$ on $W$ and $|V|=1$. We also choose $V$ so that $V(x_1)=Y=\nabla b(x_1)\in T_{x_1}M$. 

\begin{claim}\label{claim:extens}
There exists an open neighborhood $W'$ of $x_1$ such that $$\nabla b(x)=V(x)$$ on $W'\cap U$. In particular, $b$ is smooth on $W'$. 
\end{claim}
\begin{proof}[Proof of Claim~\ref{claim:extens}]
Suppose on the contrary, there exists a sequence $\{z_k\}_{k=1}^\infty$ such that $z_k\in U$, $z_k\to x_1$ and $\nabla b(z_k)\neq V(z_k)$ for all $k\to+\infty$. By repeating the above argument to $z_k$ in place of $x_1$, we must have $\nabla b(z_k)=-V(z_k)$ such that $\nabla b(z_k)\to -V(x_1)$ by continuity of $V$. But this contradicts with the continuity of sub-gradient at convergent differentiable point. This shows that $\nabla b=V$ almost everywhere on some open neighborhood $W'$. Since $b$ is locally Lipchitz with smooth weak derivatives, $b$ is smooth there.
\end{proof}

The Claim~\ref{claim:extens} also shows the first part of $(\mathrm{II})$ and thus $\mathrm{div}(X_\infty)$ exists smoothly on $W'$. By \eqref{eqn:div-almostnonnega} and smoothness of $b$, we want to rule out the situation that on $W'$,
\begin{enumerate}
    \item $T(\nabla b)\equiv 0$ and
    \item $\langle T,\nabla^2 b\rangle\equiv 0$. 
\end{enumerate}

Since $\nabla b$ is non-trivial from Lemma~\ref{lma:vol-ineq}, $T$ must be of rank $n-1$ on $W'$ using Claim~\ref{claim:null}. From $\langle T,\nabla^2 b\rangle=0$, $\nabla^2 b=\lambda V\otimes V$ on $W'$ where $\lambda(x)\geq0$. We now differentiate $|\nabla b|^2=1$ to see that $\langle \nabla^2 b,\nabla b\rangle=0$ and thus $\nabla b$ is a parallel vector field on $W'$. 

By Bochner formula, we therefore conclude that $\Ric(\nabla b,\nabla b)=0$, while 
\begin{equation}
    0=\Ric(\nabla b,\nabla b)=\frac12 \mathrm{scal} \cdot |\nabla b|^2>0,
\end{equation}
since $\mathrm{rank}(\Ric)\geq 1$ and $\Ric\geq 0$ on $W'$. This is impossible. In conclusion, we have shown that 
\begin{equation}
    \int_{W'}\mathrm{div}(X_\infty)\,d\mathrm{vol}_g >0.
\end{equation}

Finally we claim that this will contradict with \eqref{eqn:divv-seq}. We fix a smooth non-negative function $\varphi$ on $M$ such that $\varphi=1$ on $W''\Subset W'$ and vanish outside $W'$. By Stoke Theorem for all $i\in\mathbb{N}\cup\{+\infty\}$ and divergent free from Lemma~\ref{lma:T}, we have
\begin{equation}\label{eqn:inte-with err-0}
\begin{split}
\int_{M} \mathrm{div}(X_i)\cdot\varphi\,d\mathrm{vol}_g&=-\int_M \langle X_i,\nabla \varphi\rangle \,d\mathrm{vol}_g\\
&=-\int_M   T( \nabla f_i,\nabla \varphi) \,d\mathrm{vol}_g\\
&=\int_M   f_i \cdot \langle T,\nabla^2 \varphi\rangle\,d\mathrm{vol}_g.
    \end{split}
\end{equation}

Thanks to \eqref{eqn:div-almostnonnega} and Lemma~\ref{lma:GreeneWu}, 
\begin{equation}
    \begin{split}
 0\quad&< \int_{M} \mathrm{div}(X_\infty)\cdot\varphi\,d\mathrm{vol}_g\\
 &=\int_M   f_\infty \cdot \langle T,\nabla^2 \varphi\rangle\,d\mathrm{vol}_g\\
 &=\lim_{i\to+\infty}\int_M   f_i \cdot \langle T,\nabla^2 \varphi\rangle\,d\mathrm{vol}_g\\
 &=\lim_{i\to+\infty}\int_{M} \mathrm{div}(X_i)\cdot\varphi\,d\mathrm{vol}_g.
    \end{split}
\end{equation}

Since $W'\subseteq B_g(x_0,r)$, this contradicts with \eqref{eqn:divv-seq} and thus finishing the proof.
\end{proof}

\medskip

\begin{proof}[Proof of Theorem~\ref{thm:scalar-growth}]

For all $r>1$, we let $\varphi$ be a smooth non-negative function on $M$ such that $\varphi=1$ on $B_g(x_0,r)$, vanishes outside $B_g(x_0,2r)$ and satisfies $|\nabla \varphi|\leq 2r^{-1}$ on $M$. 

By Proposition~\ref{prop:non-vanishmeasure}, we have 
\begin{equation}\label{eqn:choice-c_0}
    c_0:=\liminf_{i\to+\infty} \int_{B_g(x_0,1)} \mathrm{div}(X_i)\,d\mathrm{vol}_g>0.
\end{equation}

By \eqref{eqn:div-almostnonnega}, Lemma~\ref{lma:GreeneWu}, Stoke Theorem, $0\leq T\leq \mathrm{scal}$, and the choice of $\varphi$, for any $r\geq 1$ and $i\in\mathbb{N}$ we have
\begin{equation}\label{eqn:inte-with err}
\begin{split}
&\quad \int_{B_g(x_0,1)} \left[\e_i\left( \frac{n}{2}-1\right)  \cdot \phi(b_i) \cdot \mathrm{scal}+\mathrm{div}(X_i)\right] \,d\mathrm{vol}_g\\
&\leq\int_{M} \left[\e_i\left( \frac{n}{2}-1\right)  \cdot \phi(b_i) \cdot \mathrm{scal}+\mathrm{div}(X_i)\right]\varphi \,d\mathrm{vol}_g \\
&=\e_i\left( \frac{n}{2}-1\right) \cdot \int_{M} \phi(b_i)\cdot \mathrm{scal}\cdot\varphi\,d\mathrm{vol}_g -\int_M \langle X_i,\nabla \varphi\rangle\,d\mathrm{vol}_g\\
&\leq \e_i\left( \frac{n}{2}-1\right) \cdot \int_{M} \phi(b_i)\cdot \mathrm{scal}\cdot\varphi\,d\mathrm{vol}_g +\frac{4}{r}\int_{A_g(x_0,r,2r)} \mathrm{scal}\,d\mathrm{vol}_g.
    \end{split}
\end{equation}

The conclusion follows by letting $i\to+\infty$ for each fixed $r\geq 1$, by our choice of $c_0>0$. 
\end{proof}

Finally we note that in case of $\mathrm{sec}>0$ nearby $x_0$, the constant $c_0$ in Theorem~\ref{thm:scalar-growth} can be estimated from below by curvature lower bound over ball centered at $x_0$. 
\begin{prop}\label{prop:c_1-local}
In the setting of Theorem~\ref{thm:scalar-growth}, if in addition $\mathrm{sec}>0$ on $B_g(x_0,r)$ for $r\in (0,1]$, then we might choose $c_1$ so that 
\begin{equation}
    c_1\geq \frac{(n-1)(n-2)}{8(1+r^2)^{3/2}}\cdot \mathrm{Vol}_g(x_0,r)\cdot \inf_{B_g(x_0,r)}\mathrm{sec}(g)>0.
\end{equation}
\end{prop}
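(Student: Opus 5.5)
The plan is to make the constant $c_0$ in \eqref{eqn:choice-c_0} explicit, working over the ball $B_g(x_0,r)$ in place of $B_g(x_0,1)$, and then rerun the proof of Theorem~\ref{thm:scalar-growth} unchanged. That proof shows that for every $R\geq 1$,
\[
\liminf_{i\to+\infty}\int_{B_g(x_0,r)}\mathrm{div}(X_i)\,d\mathrm{vol}_g\leq \frac{4}{R}\int_{A_g(x_0,R,2R)}\mathrm{scal}\,d\mathrm{vol}_g .
\]
The error term $\e_i(\frac n2-1)\int\phi(b_i)\,\mathrm{scal}\,\varphi$ tends to $0$ for each fixed $R$. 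So any lower bound $c_0$ for the left side gives $c_1\geq c_0/8$ after adjusting the radii, which explains the factor $\frac18$.

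The lower bound will come from the second term $\phi''(b_i)\,T(\nabla b_i,\nabla b_i)$ in \eqref{eqn:div-almostnonnega}, discarding the nonnegative term $\phi'(b_i)\langle T,\nabla^2 b_i\rangle+\e_i(\cdots)$.

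\textbf{Step 1: pointwise lower bound on $T$.} If $\mathrm{sec}\geq\kappa>0$ at a point, then for a unit vector $e$ we have
\[
T(e,e)=\tfrac12\mathrm{scal}-\Ric(e,e)=\sum_{j<k,\;j,k\neq 1}K(e_j,e_k)
\]
in an orthonormal frame with $e_1=e$. This gives $T(e,e)\geq \frac{(n-1)(n-2)}{2}\kappa$, and by homogeneity $T(v,v)\geq \frac{(n-1)(n-2)}{2}\kappa|v|^2$.

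\textbf{Step 2: lower bound on $\phi''$.} We compute $\phi''(s)=(1+s^2)^{-3/2}$. On $B_g(x_0,r)$ we have $0\leq b\leq\rho<r$ by Lemma~\ref{lma:dist}, where we may take the base point of the Busemann function to be $x_0$ and $b\geq 0$ after normalization; strictly, $b(x_0)=0$ requires care, but $b\leq \rho$ together with $\inf b=0$ suffices. Since $|b_i-b|\leq\e_i$, this gives $\phi''(b_i)\geq (1+(r+\e_i)^2)^{-3/2}$.

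\textbf{Step 3: the gradient factor $|\nabla b_i|^2$.} This is the main obstacle, since we only know $|\nabla b_i|\leq 1$. At each differentiability point $x$ of $b$, the argument in Proposition~\ref{prop:non-vanishmeasure} shows that every subsequential limit $Y$ of $\nabla b_i(x)$ is a subgradient of $b$ at $x$, hence equals $\nabla b(x)$, which has unit length by Lemma~\ref{lma:vol-ineq}. Therefore $|\nabla b_i|\to 1$ almost everywhere on $B_g(x_0,r)$, and Fatou's lemma gives
\[
\liminf_i\int_{B_g(x_0,r)}\phi''(b_i)\,T(\nabla b_i,\nabla b_i)\geq \frac{(n-1)(n-2)\,\kappa}{2(1+r^2)^{3/2}}\,\mathrm{Vol}_g(x_0,r).
\]

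\textbf{Step 4: combine.} Dividing by the cutoff constant $4$ from $|\nabla\varphi|\leq 2/R$, together with the factor from $T\leq \mathrm{scal}$ (and comparing the annulus integral with the ball integral at radius $2R$), gives $c_1\geq \frac{(n-1)(n-2)}{8(1+r^2)^{3/2}}\mathrm{Vol}_g(x_0,r)\inf_{B_g(x_0,r)}\mathrm{sec}$. The constant bookkeeping in this step may need a minor adjustment of the radii.
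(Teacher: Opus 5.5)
Your argument is essentially the paper's. It uses the same pointwise bound $T\geq\frac{(n-1)(n-2)}{2}\kappa\, g$ on $B_g(x_0,r)$, the same bound $\phi''(b_i)\geq (1+(r+\e_i)^2)^{-3/2}$, and the same reduction to $\liminf_i\int_{B_g(x_0,r)}|\nabla b_i|^2\,d\mathrm{vol}_g\geq\mathrm{Vol}_g(x_0,r)$. The only variation is in Step 3. You get $\nabla b_i\to\nabla b$ almost everywhere from the subgradient argument of Proposition~\ref{prop:non-vanishmeasure} and then apply Fatou. The paper instead uses weak convergence $\nabla b_i\rightharpoonup\nabla b$ and lower semicontinuity of the $L^2$ norm. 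Both are valid.

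Two corrections are needed.

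First, your Step 4 does not produce $\frac18$. Your lower bound for $c_0$ already carries a factor $\frac12$, so $c_1\geq c_0/8$ only gives $\frac{1}{16}$. You need $c_1\geq c_0/4$. The cleanest fix is to use $0\leq T\leq\frac12\mathrm{scal}\cdot g$ (valid since $\Ric\geq 0$) instead of $T\leq\mathrm{scal}$. Combined with $|\nabla f_i|\leq 2$ and $|\nabla\varphi|\leq 2/R$, the boundary term is then at most $\frac{2}{R}\int_{A_g(x_0,R,2R)}\mathrm{scal}\leq 4\cdot\frac{1}{2R}\int_{B_g(x_0,2R)}\mathrm{scal}$. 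A less wasteful cutoff would also work. The paper's proof stops at the bound for $c_0$, so this conversion is left implicit there too.

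Second, in Step 2 you do not need $\inf_M b=0$. For the prescribed $x_0$, that normalization need not be compatible with $b(x_0)=0$, so your remark that ``$b\leq\rho$ together with $\inf b=0$ suffices'' is not justified as stated. Replace $b$ by $b-b(x_0)$, which is still convex and $1$-Lipschitz; nothing in the proof of Theorem~\ref{thm:scalar-growth} uses $\inf_M b=0$. Then $|b|\leq\rho<r$ on $B_g(x_0,r)$, hence $|b_i|\leq r+\e_i$. Since $\phi''(s)=(1+s^2)^{-3/2}$ depends only on $|s|$ and decreases in it, the lower bound on $\phi''(b_i)$ follows.
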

\begin{proof}
We denote $\e:=\inf_{B_g(x_0,r)}\mathrm{sec}>0$.  In the proof of Theorem~\ref{thm:scalar-growth}, we might choose the Busemann function $b$ such that $b(x_0)=0=\inf_M b$. By the proof of \cite[Lemma 3.2]{ChanLee2025}, 
    \begin{equation}
        T_{ij}\geq  \frac12\e(n-1)(n-2)g_{ij}>0
    \end{equation}
    on $B_g(x_0,r)$ so that \eqref{eqn:div-almostnonnega} implies 
    \begin{equation}
        \begin{split}
            \mathrm{div}(X_i)\geq -o(1)+\frac12 \e(n-1)(n-2)\phi''(b_i) |\nabla b_i|^2\geq -o(1)
        \end{split}
    \end{equation}
on $B_g(x_0,r)$, as $i\to+\infty$. Furthermore by Lemma~\ref{lma:GreeneWu}, $b_i$ are $1$-Lipschitz and hence $b_i(x)\leq b_i(x_0)+r\leq \e_i+r$ on $B_g(x_0,r)$. In particular, 
\begin{equation}
    \phi''(b_i)=(1+b_i^2)^{-3/2}\geq \left(1+ (\e_i+r)^2\right)^{-3/2}.
\end{equation}

From the choice of $c_0$ in \eqref{eqn:choice-c_0}, this yields
\begin{equation}
    \begin{split}
 c_0&\geq \liminf_{i\to+\infty}       \int_{B_g(x_0,r)}  \mathrm{div}(X_i)\,d\mathrm{vol}_g\\
 &\geq \frac{\e(n-1)(n-2)}{2(1+r^2)^{3/2}}\cdot  \liminf_{i\to+\infty}   \int_{B_g(x_0,r)}  |\nabla b_i|^2\,d\mathrm{vol}_g\\
 &=\frac{\e(n-1)(n-2)}{2(1+r^2)^{3/2}}\cdot \mathrm{Vol}_g(x_0,r),
    \end{split}
\end{equation}
since $\nabla b_i$ converges weakly to $\nabla b$ and $|\nabla b|=1$ almost everywhere.     
\end{proof}

\section{Volume and Gap Theorem of three-manifolds under Ricci}

In this section, we discuss the proof of Theorem~
\ref{thm:gap}, i.e. how vanishing of asymptotic curvature forcing $M$ to be flat under $\Ric\geq 0$. We will prove a slightly more general Theorem, which says that if $k(x_0,r)=O(1)$ as $r\to+\infty$, then it is either $\mathrm{AVR}>0$ or have quadratic volume growth with finite total curvature. This is analogous to Corollary~\ref{cor:vol-decay-dich}. 
\begin{thm}\label{thm:vol-growth}
    Suppose $(M^3,g)$ is a complete non-compact non-flat manifold such that $\Ric\geq 0$, $\mathrm{AVR}(g)= 0$ and there exists $x_0\in M$ so that $\mathrm{scal}(x_0)>0$ and  $\b:=\limsup_{r\to+\infty} k(x_0,r)<+\infty$ where $k(x_0,r)$ is defined in \eqref{eqn:k}, then there is $\Lambda>1$ such that 
    \begin{enumerate}
        \item[(i)] $\Lambda^{-1}r^2\leq \mathrm{Vol}(x_0,r)\leq \Lambda r^2$ for all large $r>0$ and;
        \item[(ii)] $\displaystyle \int_M\mathrm{scal}(g)\,d\mathrm{vol}_g\leq \Lambda\b$.
    \end{enumerate}
\end{thm}

We note that the lim $\b$ is indeed independent of the base point $x_0$. Assuming Theorem~\ref{thm:vol-growth}, the gap Theorem~\ref{thm:gap} is immediate.
\begin{proof}[Proof of Theorem~\ref{thm:gap} assuming Theorem~\ref{thm:vol-growth}]

Suppose $\mathrm{AVR}>0$, then Proposition~\ref{prop:MV-inte-R} implies $\mathrm{AVR}=1$ and hence $(M^3,g)$ is flat by rigidity in volume comparison. Hence, we might assume $\mathrm{AVR}=0$. In this case, Theorem~\ref{thm:vol-growth} implies that total scalar curvature vanishes and thus flat. 
\end{proof}

In the remaining of this section, we are devoted to prove Theorem~\ref{thm:vol-growth}. For a later purpose, we also choose $\sigma\in (0,1)$ sufficiently small so that 
\begin{equation}\label{smallballmc}
    \int_{\partial B_g(x_0,\sigma)}H^2\,dA_g<16\pi.
\end{equation}
using  \cite[Proposition 3.1]{Mondino}.

As explained briefly in the introduction, we will start with observing that $(M^3,g)$ in Theorem~\ref{thm:vol-growth} must be at least quadratic volume growth.
\begin{lma}\label{lma:quad-vol-k}
Suppose $(M^n,g)$ is a complete non-flat non-compact manifold such that $\limsup_{r\to+\infty} k(x_0,r)<+\infty$, then there is $c_1>0$ such that 
\begin{equation}
\liminf_{r\to+\infty}\frac{\mathrm{Vol}_g(x_0,r)}{r^2}\geq c_1.
\end{equation}
\end{lma}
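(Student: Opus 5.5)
The plan is to argue directly from the definition of $k(x_0,r)$, combined with the fact that a non-flat manifold carries a definite amount of scalar curvature on some fixed ball. Since the lemma is stated without a curvature sign assumption, I will read it in the setting of this section, where $\Ric\geq 0$ and hence $\mathrm{scal}\geq 0$. The first step is then to use non-flatness. If $\mathrm{scal}\equiv 0$, then $\Ric\geq 0$ forces $\Ric\equiv 0$, and in dimension three this means the metric is flat. (In the setting of Theorem~\ref{thm:vol-growth} one even has $\mathrm{scal}(x_0)>0$.) So there is a radius $r_0>0$ with
\begin{equation}
c_2:=\int_{B_g(x_0,r_0)}\mathrm{scal}\,d\mathrm{vol}_g>0 .
\end{equation}

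Next, let $\b:=\limsup_{r\to+\infty}k(x_0,r)<+\infty$, and choose $R_0>r_0$ such that $k(x_0,r)\leq \b+1$ for all $r\geq R_0$. Since $\mathrm{scal}\geq 0$, for $r\geq R_0$ we have
\begin{equation}
c_2\leq \int_{B_g(x_0,r)}\mathrm{scal}\,d\mathrm{vol}_g = \frac{\mathrm{Vol}_g(x_0,r)}{r^2}\, k(x_0,r)\leq (\b+1)\frac{\mathrm{Vol}_g(x_0,r)}{r^2}.
\end{equation}
Rearranging gives $\mathrm{Vol}_g(x_0,r)\geq c_2(\b+1)^{-1}r^2$ for all large $r$. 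Taking $c_1:=c_2/(\b+1)$ yields the claimed bound on the $\liminf$.

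The only real obstacle is the first step. One has to justify that the total scalar curvature over some ball is strictly positive, and this depends on $\mathrm{scal}\geq 0$ together with the three-dimensional fact that Ricci-flat implies flat. In general dimension, or without a curvature sign, I would fall back on the standing assumption $\Ric\geq 0$. Under that assumption a vanishing scalar curvature forces $\Ric\equiv 0$, which in dimension three is flatness. In higher dimensions I would instead need to add the hypothesis $\mathrm{scal}\not\equiv 0$, and this is exactly what the later application uses anyway, since there $\mathrm{scal}(x_0)>0$. The rest of the argument is the elementary rearrangement above.
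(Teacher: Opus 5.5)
Your argument is correct and is essentially the paper's proof: bound $k(x_0,r)\le C_0$ for large $r$, write $\int_{B_g(x_0,r)}\mathrm{scal}\,d\mathrm{vol}_g=k(x_0,r)\,\mathrm{Vol}_g(x_0,r)/r^2$, and use $\mathrm{scal}\ge 0$ to bound the left side from below by a fixed positive integral over a small ball. (The paper uses the ball of radius $1$ and takes positivity from $\mathrm{scal}(x_0)>0$ in Theorem~\ref{thm:vol-growth}.) You also note that the lemma implicitly needs $\mathrm{scal}\ge 0$ and $\mathrm{scal}\not\equiv 0$; this is accurate, and it matches what the paper's proof actually uses. When $n=3$ these conditions follow from $\Ric\ge 0$ together with non-flatness, but in higher dimensions they must be assumed separately.
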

\begin{proof}
Since $k(x_0,r)$ is bounded from above as $r\to+\infty$,  we choose $R_0, C_0>1$ such that for all $r>R_0$, $ k(x_0,r)\leq C_0$ and hence
\begin{equation}
\begin{split}
    \mathrm{Vol}_g(x_0,r) &\geq C_0^{-1} r^2 \cdot\int_{B_g(x_0,r)}\mathrm{scal}\,d\mathrm{vol}_g\\
    &\geq C_0^{-1} r^2 \cdot\int_{B_g(x_0,1)} \mathrm{scal}\,d\mathrm{vol}_g,
\end{split}
\end{equation}
for all $r>R_0$. Since $\mathrm{scal}(x_0)>0$, result follows.
\end{proof}

We note that Lemma~\ref{lma:quad-vol-k} is dimension independent but for convenience, we restrict to $n=3$. Lemma~\ref{lma:quad-vol-k} implies that $(M^3,g)$ is $p$-nonparabolic for a well-chosen $p\in (1,2]$. 
\begin{lma}\label{lma:nonparabolic}
$(M^3,g)$ is $p$-nonparabolic, for $p=4/3\in (1,2)$.
\end{lma}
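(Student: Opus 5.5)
We will use the standard volume criterion for $p$-nonparabolicity on manifolds with $\Ric\geq 0$, due to Holopainen (see also Coulhon--Holopainen--Saloff-Coste). Under non-negative Ricci curvature the volume doubling property and the $(1,p)$-Poincar\'e inequality hold by Bishop--Gromov and Buser. In that setting $M$ is $p$-nonparabolic if and only if
\begin{equation}
\int_1^{+\infty}\left(\frac{r}{\mathrm{Vol}_g(x_0,r)}\right)^{\frac{1}{p-1}}\,dr<+\infty .
\end{equation}
The plan is to check this integrability condition with $p=4/3$, using the lower volume bound from Lemma~\ref{lma:quad-vol-k}.

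For $p=4/3$ we have $\frac1{p-1}=3$. By Lemma~\ref{lma:quad-vol-k}, which applies because $(M^3,g)$ is non-flat, $\mathrm{scal}(x_0)>0$ and $\b<+\infty$, there are $c_1>0$ and $R_0>1$ such that $\mathrm{Vol}_g(x_0,r)\geq \tfrac{c_1}{2}r^2$ for all $r>R_0$. The integrand is therefore bounded for $r>R_0$ by
\begin{equation}
\left(\frac{2r}{c_1 r^2}\right)^3=\frac{8}{c_1^3}\, r^{-3},
\end{equation}
which is integrable at infinity. On $[1,R_0]$ the integrand is bounded, since $\mathrm{Vol}_g(x_0,r)\geq \mathrm{Vol}_g(x_0,1)>0$. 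The integral is thus finite, and $p$-nonparabolicity follows.

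The quadratic lower bound only makes the integrand decay like $r^{-1/(p-1)}$, so any $p<2$ would work; $p=4/3$ is simply a convenient choice for the later monotonicity argument. Nothing here uses $\mathrm{AVR}(g)=0$.

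The main step to justify is the applicability of the volume criterion. One direction is elementary: a finite integral gives $p$-nonparabolicity through an explicit test function built from $\rho$ on annuli. Concretely, one shows that the $p$-capacity of $B_g(x_0,1)$ relative to $B_g(x_0,R)$ is bounded below uniformly in $R$. This uses the doubling property and the Poincar\'e inequality, by a chaining argument over dyadic annuli $A_k=B_g(x_0,2^{k+1})\setminus B_g(x_0,2^k)$. On each annulus the capacity is comparable to $\mathrm{Vol}_g(x_0,2^k)\,2^{-kp}$, and the series sum is comparable to the integral above.

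I would simply cite Holopainen's theorem for this step. If the cited statement requires doubling and Poincar\'e, I would note that both hold with constants depending only on $n=3$ under $\Ric\geq 0$.
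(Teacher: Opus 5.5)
Your proposal is correct and follows the paper's proof essentially verbatim: the quadratic lower volume bound from Lemma~\ref{lma:quad-vol-k} bounds $\int_1^{\infty}(s/\mathrm{Vol}_g(x_0,s))^{3}\,ds$ by a multiple of $\int_1^\infty s^{-3}\,ds$, and Holopainen's volume criterion under $\Ric\geq 0$ then gives $\tfrac43$-nonparabolicity. One correction to your side remark: the elementary, test-function direction of that criterion is the opposite one (a divergent integral implies $p$-parabolicity, via upper bounds on capacity), whereas the direction you need (a finite integral implies nonparabolicity) is exactly the capacity lower bound that requires doubling and Poincar\'e, so citing Holopainen is the right move.
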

\begin{proof}
By Lemma~\ref{lma:quad-vol-k}, for any $x\in M$ we have 
\begin{equation}
    \begin{split}
        \int_1^\infty\left(\frac{s}{\mathrm{Vol}_g(x,s)}\right)^{\frac{1}{\frac{4}{3}-1}}ds&=\int_1^\infty\left(\frac{s}{\mathrm{Vol}_g(x,s)}\right)^3ds\\
        &\lesssim\int_1^\infty\frac{1}{s^3}\,ds<+\infty.
    \end{split}
\end{equation}

Since $\Ric\geq 0$, the assertion follows by \cite[Proposition 5.10]{Holopainen1999}, see \cite[Remark 4.2]{FM2022}.
\end{proof}

We consider the Dirichlet problem of the $p$-harmonic function on the exterior domain $M\setminus \partial B_g(x_0,\sigma)$:
\begin{equation}\label{eqn:p-harm}
    \left\{
    \begin{array}{cc}
         \Delta_{\frac{4}{3}}u=0 \quad &\textit{in  } \Omega:=M\setminus \overline{B_g(x_0,\sigma)}\\[2mm]
          u=1\quad&\text{on  } \partial B_g(x_0,\sigma)\\[2mm]
    u(x)\to 0\quad&\text{as  }  x\to \infty.
    \end{array}
    \right.
\end{equation}

\begin{lma}\label{lma:p-exist-estimate}
Under the assumption of Theorem~\ref{thm:vol-growth}, \eqref{eqn:p-harm} admits a solution $u$ such that 
\begin{enumerate}
    \item [(i)] $0<u<1$ on $M\setminus \overline{B_g(x_0,\sigma)}$;
    \item[(ii)] $\int_{\Omega} |\nabla u|^{4/3}\,d\mathrm{vol}_g <+\infty$;
    \item[(iii)] There is $C_0>0$ such that for $x\to\infty$, 
    \begin{equation}
        u(x)\leq C_0\left(\int_{d_g(x,x_0)}^\infty\mathrm{Vol}(x_0,s)^{-3/4}\,d s\right)^4;
    \end{equation}
    \item[(iv)] $u$ is $C^{1,\b}$ up to $\partial \Omega$ and is smooth with non-vanishing gradient in a neighborhood of $\partial\Omega$.
\end{enumerate}
\end{lma}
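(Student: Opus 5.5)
I would construct $u$ by the standard exhaustion method and then read off (i)--(iv) from known $p$-potential theory on manifolds with $\Ric\geq 0$; throughout $p=4/3$.

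\emph{Construction.} For $R>\sigma$ large, let $u_R$ be the $p$-harmonic function on $\Omega_R:=B_g(x_0,R)\setminus\overline{B_g(x_0,\sigma)}$ with $u_R=1$ on $\partial B_g(x_0,\sigma)$ and $u_R=0$ on $\partial B_g(x_0,R)$. It is the minimizer of the $p$-energy in the natural affine class of $W^{1,p}$ functions.
\begin{itemize}
\item By the comparison principle, $0\le u_R\le 1$ and $u_R$ is nondecreasing in $R$.
\item The energies $\int_{\Omega_R}|\nabla u_R|^{p}$ equal the $p$-capacities of the condenser $(\overline{B_g(x_0,\sigma)},B_g(x_0,R))$. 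These are nonincreasing in $R$, hence uniformly bounded.
\item By Lemma~\ref{lma:nonparabolic}, the limiting capacity $\mathrm{Cap}_p(\overline{B_g(x_0,\sigma)})$ is positive.
\end{itemize}
Local $C^{1,\beta}$ estimates for $p$-harmonic functions (DiBenedetto, Tolksdorf, Lieberman up to the smooth boundary $\partial B_g(x_0,\sigma)$, which is smooth since $\sigma$ is small) give a locally uniform limit $u=\lim_R u_R$. This $u$ is $p$-harmonic in $\Omega$, equals $1$ on $\partial\Omega$, and satisfies (ii) by lower semicontinuity of the energy. The strong maximum principle gives $0<u<1$, i.e. (i): $u\not\equiv 1$ because the capacity is positive, and $u\not\equiv 0$ near the boundary.

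\emph{Decay (iii).} This step also yields $u\to0$ at infinity. I would invoke the Green-function/capacity estimates of Holopainen \cite{Holopainen1999}, also used in \cite{FM2022}. Under $\Ric\geq0$ the manifold supports a volume-doubling property and a $(1,1)$-Poincaré inequality, so a Harnack inequality holds for $p$-harmonic functions, uniform across scales. This yields two-sided bounds on the $p$-capacity of annuli, of the form
\[
\mathrm{Cap}_p\bigl(B(x_0,r),B(x_0,2r)\bigr)\simeq \mathrm{Vol}(x_0,r)\,r^{-p}.
\]
With $p=4/3$, so $1/(p-1)=3$, the standard estimate reads
\[
u(x)\lesssim \left(\int_{\rho(x)}^\infty \left(\frac{s}{\mathrm{Vol}(x_0,s)}\right)^{\frac1{p-1}}ds\right)^{p-1}.
\]
The argument runs as follows:
\begin{itemize}
\item Since $u$ is the capacitary potential, $\max_{\partial B(x_0,r)}u\lesssim \mathrm{Cap}_p(\overline{B_\sigma},M)^{1/(p-1)}/\mathrm{Cap}_p(\overline{B_r},M)^{1/(p-1)}$. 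This uses Harnack on the sphere $\partial B(x_0,r)$ (connected at large scales, or at least along the relevant ends) together with the comparison principle.
\item One bounds $\mathrm{Cap}_p(\overline{B_r},M)$ from below by summing the dyadic annular capacities in series.
\end{itemize}
The resulting integrand has to be rewritten to match the exponent $-3/4$ and power $4$ in the statement. I would do this by splitting the exponents using $\mathrm{Vol}(x_0,s)\gtrsim s^2$ from Lemma~\ref{lma:quad-vol-k} and Bishop--Gromov; the paper's form is the analogous one for the Green-type normalization. The claim $u\to0$ then follows from the convergence of this integral, which Lemma~\ref{lma:nonparabolic} guarantees. This step is the main obstacle. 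It needs Harnack on large spheres, which in a collapsed setting requires working with connected components of annuli rather than spheres, and it needs care matching exponents. I would first try to cite \cite[Proposition 5.10]{Holopainen1999} essentially verbatim for the capacitary potential.

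\emph{Regularity (iv).} The $C^{1,\beta}$ regularity up to $\partial\Omega$ is Lieberman's boundary estimate. Hopf's lemma for $p$-harmonic functions (Tolksdorf) gives $|\nabla u|>0$ on $\partial\Omega$, hence in a neighborhood of $\partial\Omega$. Where the gradient does not vanish, the $p$-Laplace equation is uniformly elliptic, so Schauder bootstrapping gives smoothness there.
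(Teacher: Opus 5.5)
Your construction of $u$ is sound: exhaustion by annular Dirichlet problems, nonincreasing capacities, positivity of the limiting capacity from Lemma~\ref{lma:nonparabolic}, the strong maximum principle, and Lieberman's boundary estimate with the $p$-Hopf lemma for (iv). It is essentially what the paper gets by citing \cite[Theorem 4.1 \& Appendix A]{FM2022}. The gap is in (iii).

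Your displayed ``standard estimate''
\[
u(x)\lesssim \Big(\int_{\rho(x)}^\infty \Big(\frac{s}{\mathrm{Vol}(x_0,s)}\Big)^{3}ds\Big)^{1/3}
\]
carries a spurious outer power $p-1=1/3$, and it contradicts your own first bullet. Combining $u\lesssim \mathrm{Cap}_p(\overline{B_r},M)^{-1/(p-1)}$ with the series/annulus estimate $\mathrm{Cap}_p(\overline{B_r},M)\approx\big(\int_r^\infty (s/\mathrm{Vol}(x_0,s))^{1/(p-1)}ds\big)^{1-p}$ gives $u\lesssim\int_\rho^\infty s^3\,\mathrm{Vol}(x_0,s)^{-3}\,ds$, with no outer power.

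This is not cosmetic. For $\mathrm{Vol}(x_0,s)\approx s^2$ your bound gives only $\rho^{-2/3}$, whereas (iii) asserts $\rho^{-2}$. So no rearrangement using $\mathrm{Vol}\gtrsim s^2$ and Bishop--Gromov can turn your bound into (iii). The exact power is also what drives Proposition~\ref{prop:finite}. There, $u\lesssim m(\rho)^{-3}\rho^{-2}$ yields $e^{3T_j}\approx r_j^2m(r_j)^3$, which is played against $\mathrm{Vol}_g(x_0,r_j)=r_j^2m(r_j)$. Your version would give only $e^{3T_j}\approx r_j^{2/3}m(r_j)$, and no contradiction follows.

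Once the power is corrected, (iii) follows from an elementary dyadic comparison, not from $\mathrm{Vol}\gtrsim s^2$. Set $a_k:=2^k\rho\,\mathrm{Vol}(x_0,2^k\rho)^{-3/4}$ and use the doubling $\mathrm{Vol}(x_0,2s)\le 8\,\mathrm{Vol}(x_0,s)$. Then
\[
\int_\rho^\infty\frac{s^3\,ds}{\mathrm{Vol}(x_0,s)^3}\le 8\sum_{k\ge0}a_k^4\le 8\Big(\sum_{k\ge0}a_k\Big)^4\le 8^4\Big(\int_\rho^\infty\frac{ds}{\mathrm{Vol}(x_0,s)^{3/4}}\Big)^4 .
\]
So (iii) is a weaker consequence of the Holopainen/Kotschwar--Ni upper bound for the $p$-Green function. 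This is how the paper obtains it: it cites \cite[Proposition 4.4]{KotschwarNi} and Holopainen's Green-function estimate, and notes \cite[p.~15]{KotschwarNi} that the bound transfers to the Dirichlet potential. Those estimates also handle the Harnack-on-large-spheres issue you flag.

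Finally, \cite[Proposition 5.10]{Holopainen1999} is used in the paper only as the nonparabolicity criterion. Citing it ``verbatim'' will not by itself give the pointwise decay.
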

\begin{proof}
The existence of $u$ with (ii) follows from  \cite[Theorem 4.1]{FM2022} and Lemma~\ref{lma:nonparabolic}. The property (i) follows from strong maximum principle of $p$-harmonic function. The estimate (iii) for $p$-Green function follows from \cite[Proposition 4.4]{KotschwarNi} and \cite[Proposition 5.7]{Ho}, as pointed out in \cite[p. 15]{KotschwarNi}, it carries over to Dirichlet solution $u$. The smoothness across the boundary follows from \cite[Theorem 4.1 \& Appendix A]{FM2022}. 
\end{proof}

It is more convenient to consider the normalized function. We denote $\rho:=d_g(x_0,\cdot)$, $ w:=-\frac13\log u$ and
\begin{equation}\label{zg:w}
 \Omega_t:=\overline{B_g(x_0,\sigma)}\cup
           \{x\in M\setminus\overline{B_g(x_0,\sigma)}:w(x)<t\}, \, t>0.
\end{equation}

\begin{lma}[Boundary regularity and the logarithmic gradient]
\label{lma:gradient}
We have $w>0$ on $\Omega$, vanishes on $\partial \Omega$ and satisfies
\[
 \operatorname{div}(|\nabla w|^{-2/3}\nabla w)=|\nabla w|^{4/3}.
\]
For each $T>0$, the set $\{x\in M\setminus\Omega:0\le w(x)\le T\}$
is compact. There is $t_0>0$ such that $w$ is smooth and $|\nabla w|>0$
throughout this set for $T=t_0$. Moreover,
\[
 \sup_{M\setminus\Omega}|\nabla w|<\infty,
 \qquad |\nabla w|(x)\le\frac{C}{\rho(x)}\quad(\rho(x)\ge2\sigma).
\]
\end{lma}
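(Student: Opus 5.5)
The plan is to read $w$ as defined on $\overline{\Omega}$; I interpret the sets written $M\setminus\Omega$ in the statement as $\overline{\Omega}=M\setminus B_g(x_0,\sigma)$. I would derive every assertion from Lemma~\ref{lma:p-exist-estimate} through the substitution $u=e^{-3w}$.

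\emph{Sign, boundary values and the equation.} By Lemma~\ref{lma:p-exist-estimate}(i), $0<u<1$ in $\Omega$ and $u=1$ on $\partial\Omega$, so $w>0$ in $\Omega$ and $w=0$ on $\partial\Omega$. For the equation, I compute wherever $\nabla u\neq 0$, where $u$ is smooth. We have $\nabla u=-3u\nabla w$, hence
\[
|\nabla u|^{-2/3}\nabla u=-3^{1/3}u^{1/3}|\nabla w|^{-2/3}\nabla w .
\]
Taking the divergence, the product rule together with $\nabla u^{1/3}=-u^{1/3}\nabla w$ gives
\[
0=\Delta_{4/3}u=-3^{1/3}u^{1/3}\Big(\operatorname{div}(|\nabla w|^{-2/3}\nabla w)-|\nabla w|^{4/3}\Big).
\]
This yields the stated identity, classically where $\nabla w\neq0$ and in the weak sense in general, since the substitution is a smooth monotone change of variables in the weak formulation.

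\emph{Compactness and the collar.} For $T>0$, $\{0\le w\le T\}=\{x\in\overline{\Omega}:u(x)\ge e^{-3T}\}$. This set is closed because $u$ is continuous up to $\partial\Omega$, and it is bounded because $u\to0$ at infinity; hence it is compact. By Lemma~\ref{lma:p-exist-estimate}(iv), there is a neighborhood $V$ of $\partial\Omega$ in $\overline{\Omega}$ on which $u$ is smooth with $|\nabla u|>0$. The set $\overline{\Omega}\setminus V$ is closed, $u<1$ there, and $u\to0$ at infinity, so $\sup_{\overline\Omega\setminus V}u=:1-\eta<1$. Choosing $t_0$ with $e^{-3t_0}>1-\eta$ forces $\{w\le t_0\}\subset V$. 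On this set $w$ is smooth and $|\nabla w|=|\nabla u|/(3u)>0$.

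\emph{Gradient bounds.} This is the main step. I would use the Cheng--Yau type logarithmic gradient estimate for positive $p$-harmonic functions under $\Ric\ge0$ (Kotschwar--Ni, Wang--Zhang): if $v>0$ is $p$-harmonic on $B_g(x,2R)$, then $|\nabla\log v|\le C(n,p)/R$ on $B_g(x,R)$. For $\rho(x)\ge2\sigma$, the ball $B_g(x,\rho(x)/2)$ is disjoint from $B_g(x_0,\rho(x)/2)\supseteq \overline{B_g(x_0,\sigma)}$, so $u$ is positive and $p$-harmonic there. Applying the estimate with $R=\rho(x)/4$ gives $|\nabla w|=\frac13|\nabla\log u|\le C/\rho(x)$. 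On the compact shell $\{\sigma\le\rho\le2\sigma\}$, $u$ is $C^{1,\beta}$ up to $\partial\Omega$ by Lemma~\ref{lma:p-exist-estimate}(iv) and bounded below by a positive constant, so $|\nabla w|\le|\nabla u|/(3u)$ is bounded there. Together with $C/\rho\le C/(2\sigma)$ on the rest of $\overline{\Omega}$, this gives $\sup_{\overline\Omega}|\nabla w|<\infty$.

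The only genuinely external input is the $p$-harmonic gradient estimate. I expect the main care to be needed in checking that it holds for $p=4/3<2$ with a constant depending only on $n$ and $p$, and in handling its validity across the degenerate set $\{\nabla u=0\}$, which I would treat by the standard regularization of the $p$-Laplacian.
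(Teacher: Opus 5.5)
Your proposal is correct and follows the paper's proof essentially step by step: the substitution $u=e^{-3w}$ for the sign, the boundary values and the equation; the decay $u\to0$ together with Lemma~\ref{lma:p-exist-estimate}(iv) for compactness and for the collar $\{w\le t_0\}$; the Wang--Zhang logarithmic gradient estimate on balls of radius comparable to $\rho(x)$ for $|\nabla w|\le C/\rho$; and boundary $C^{1,\beta}$ regularity plus positivity of $u$ on the shell $\{\sigma\le\rho\le2\sigma\}$. Your reading of $M\setminus\Omega$ as the closed exterior $M\setminus B_g(x_0,\sigma)$ also matches what the paper's proof actually uses. (One minor slip: $B_g(x_0,\rho(x)/2)$ need not contain $\overline{B_g(x_0,\sigma)}$ when $\rho(x)=2\sigma$. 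This does not matter, since the triangle inequality gives $d_g(y,x_0)>\rho(x)/2\ge\sigma$ for every $y\in B_g(x,\rho(x)/2)$, which is all you need.)
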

\begin{proof}
By Lemma~\ref{lma:p-exist-estimate}, there is $\delta>0$ such that $w$ is smooth and $|\nabla w|>0$ on
$\{\sigma\le\rho\le\sigma+\delta\}$.
Since $u<1$ in the exterior and $u\to0$ at infinity,
$\sup_{\{\rho\ge\sigma+\delta\}}u<1$.
Choose $t_0>0$ with
$e^{-3t_0}>\sup_{\{\rho\ge\sigma+\delta\}}u$. Then
\begin{equation}\label{zg:initial-level-location}
 \{x\in M\setminus B_g(x_0,\sigma):0\le w(x)\le t_0\}
 \subset\{x:\sigma\le\rho(x)<\sigma+\delta\}.
\end{equation}
The same decay makes every $\{0\le w\le T\}$ in the closed exterior
closed and bounded, hence compact. Also $w=0$ on $\partial B_g(x_0,\sigma)$
and $w>0$ outside.

 Direct computation gives
$\operatorname{div}(|\nabla w|^{-2/3}\nabla w)=|\nabla w|^{4/3}$.
For $\rho(x)\ge2\sigma$, the ball $B_{\rho(x)/4}(x)$ lies in the
exterior. \cite[Theorem 1.1]{WZ2012} therefore gives
\begin{equation}\label{zg:gradient}
 |\nabla w|(x)=\tfrac13|\nabla\log u(x)|
 \le\frac{4C_W}{3\rho(x)}\qquad(\rho(x)\ge2\sigma),
\end{equation}
where $C_W$ depends only on dimension and $p=4/3$.
On the remaining compact annulus, interior and boundary $C^1$
regularity and positivity imply
\begin{equation}\label{zg:gradient-compact}
 \sup_{\{\sigma\le\rho\le2\sigma\}}|\nabla w|
 \le
 \frac{\displaystyle\max_{\{\sigma\le\rho\le2\sigma\}}|\nabla u|}
      {3\displaystyle\min_{\{\sigma\le\rho\le2\sigma\}}u}<\infty.
\end{equation}
Together these estimates give the global bound for $|\nabla w|$.
\end{proof}

Denote the regular part of each level set of $w$ by $\Sigma_t$:
\begin{equation}
    \Sigma_t:=\{x\in M\setminus B_g(x_0,\sigma): w(x)=t, \nabla w\neq 0.\}.
\end{equation}
so that 
\begin{equation}\label{GBF}
    \int_{\Sigma_t}\mathrm{scal}(g_{\Sigma_t})\in 8\pi\mathbb{Z},
\end{equation}
for  almost all positive $t$, by \cite[Theorem 1.3]{BPP2024}.

We consider the following quantities similar to \cite{BQOP2024}
\begin{equation}
    G(t):=\frac{9}{25}\int_{\Sigma_t}|\nabla w|^2, \quad F(t):=\frac{3}{5}\int_{\Sigma_t}H|\nabla w|-G(t),
\end{equation}
in which
\[
F(t)=\int_{\Sigma_t} \frac{H^2}{4}-\left(\frac H2-\frac{3}{5}|\nabla w|\right)^2\leq \int_{\Sigma_t} \frac{H^2}{4}.
\]
From \eqref{smallballmc}, we also have $F(0)<4\pi$. More importantly, $F(t)$ and $G(t)$ satisfy a monotonicity formula under $\Ric\geq 0$. 

\begin{lma}\label{FGlema}\cite[Lemmas 2.2 and 2.3]{BQOP2024}
    $F\in W^{1,1}_{loc}(0,\infty)$ and $G\in W^{2,1}_{loc}(0,\infty)$ are continuous at $t=0$ and satisfy the following equations for a.e. $t>0$
\begin{eqnarray}
 G'(t)&=&3(G(t)-F(t))\\
F'(t)&=&-\frac{3}{5}\int_{\Sigma_t}\left[\Ric(\nu,\nu)+|A^\circ|^2+|\nabla^{\Sigma_t}\ln |\nabla w||^2+\frac{5}{2}\left(H-\frac{6}{5}|\nabla w|\right)^2\right].
\end{eqnarray}
Both $F$ and $G$ are nonincreasing in $t$,
\[
0\leq G(t)\leq F(t)\leq F(0)<4\pi\qquad(t\ge0)
\]
\begin{equation}\label{zg:square-total}
 \int_0^\infty\left(\int_{\Sigma_t}
       \left(H-\frac65|\nabla w|\right)^2 \right)\, dt
 \le\frac23F(0).
\end{equation}
\end{lma}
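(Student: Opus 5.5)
The plan is to follow the argument of \cite{BQOP2024} adapted to $p=4/3$, $n=3$. Everything reduces to two integral identities obtained from the divergence theorem on the sets $\Omega_t\setminus\Omega_s$. The monotonicity and the inequalities between $F$ and $G$ then follow by ODE comparison.

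\emph{The two derivative formulas.} Write $\nu=\nabla w/|\nabla w|$ on $\Sigma_t$.

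For $G$, I would find a vector field $Y_1$ built from $|\nabla w|^{a}\nabla w$ such that $\int_{\Sigma_t}\langle Y_1,\nu\rangle = \frac{9}{25}\int_{\Sigma_t}|\nabla w|^2$. By the coarea formula and the divergence theorem, $G(t)-G(s)=\int_{\{s<w<t\}}\mathrm{div}\,Y_1$. Expanding $\mathrm{div}\,Y_1$ with the equation $\operatorname{div}(|\nabla w|^{-2/3}\nabla w)=|\nabla w|^{4/3}$ from Lemma~\ref{lma:gradient}, and using the splitting $\nabla^2 w(\nu,\nu)=\pm(H|\nabla w|-\ldots)$ on level sets, gives $G'=3(G-F)$.

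For $F$, I would take a second field $Y_2$ whose flux through $\Sigma_t$ equals $\frac35\int_{\Sigma_t}H|\nabla w|$. Its divergence is computed with the Bochner formula for $w$, in the linearised operator of the $p$-Laplacian. I would then decompose $|\nabla^2 w|^2$ into tangential, mixed and normal parts, and use the Gauss equation together with the refined Kato-type identity. This should produce exactly the four nonnegative terms
\[
\Ric(\nu,\nu)+|A^\circ|^2+|\nabla^{\Sigma_t}\ln|\nabla w||^2+\tfrac52\bigl(H-\tfrac65|\nabla w|\bigr)^2 .
\]

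\emph{Regularity.} Three points need care.
\begin{enumerate}
\item $F\in W^{1,1}_{loc}$ and $G\in W^{2,1}_{loc}$ in the presence of critical points of $w$. I would approximate by the $\e$-regularised operator $\operatorname{div}((|\nabla w|^2+\e)^{-1/3}\nabla w)$, or work with test functions $\eta(w)$ and the fact that the critical set has measure zero. This yields the identities in the distributional sense in $t$.
\item Continuity at $t=0$. This follows from Lemma~\ref{lma:p-exist-estimate}(iv): $w$ is smooth with $|\nabla w|>0$ on $\{0\le w\le t_0\}$, by Lemma~\ref{lma:gradient}.
\item Integrability near infinity of the boundary terms. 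This uses the gradient bound $|\nabla w|\le C/\rho$ from Lemma~\ref{lma:gradient}.
\end{enumerate}
I expect the main obstacle to be the justification across critical values, which is precisely where I would rely on \cite{BQOP2024}.

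\emph{Consequences.}
\begin{itemize}
\item Since $\Ric\ge0$, the formula for $F'$ gives $F'\le0$, so $F(t)\le F(0)$. Moreover $F(0)<4\pi$ by \eqref{smallballmc}, since $F\le\int H^2/4$.
\item $G\ge0$ holds trivially.
\item To get $G\le F$, suppose $G(t_1)>F(t_1)$ for some $t_1$. Because $F$ is nonincreasing, $G'\ge3(G-F(t_1))$ for $t\ge t_1$, so $G-F(t_1)$ grows exponentially. This contradicts boundedness of $G$, which follows from $|\nabla w|\le C$ and a bound on the flux-type quantity $\int_{\Sigma_t}|\nabla w|$. Hence $G\le F$.
\item Then $G'=3(G-F)\le0$, so $G$ is nonincreasing.
\item Finally, integrate $F'$ over $(0,\infty)$ and drop the nonnegative terms other than the last. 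This gives
\[
\frac35\cdot\frac52\int_0^\infty\!\!\int_{\Sigma_t}\bigl(H-\tfrac65|\nabla w|\bigr)^2\,dt\le F(0)-\lim_{t\to\infty}F(t)\le F(0),
\]
using $F\ge G\ge0$. Since $\frac35\cdot\frac52=\frac32$, this is \eqref{zg:square-total}.
\end{itemize}
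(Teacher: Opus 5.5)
Your proposal is correct and follows the paper's route: the identities for $F'$ and $G'$, and the regularity across critical values, are taken from \cite{BQOP2024}, and \eqref{zg:square-total} follows by integrating $F'$ using $\Ric\ge 0$ and $F\ge G\ge 0$. One small repair in your proof of $G\le F$: the conserved flux is $e^{-t}\int_{\Sigma_t}|\nabla w|^{1/3}$ (Lemma~\ref{lma:flux}), not $\int_{\Sigma_t}|\nabla w|$, so what you actually get is $G(t)\le\frac{9}{25}\bigl(\sup|\nabla w|\bigr)^{5/3}\int_{\Sigma_t}|\nabla w|^{1/3}\le Ce^{t}$ rather than boundedness; this still rules out the growth $\gtrsim e^{3(t-t_1)}$ forced by $G(t_1)>F(t_1)$.
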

\begin{proof}
The proof can be found in \cite[Lemmas 2.2 and 2.3]{BQOP2024}. The integral estimates follows from integrating $F'$ from $t=0$ to $t=+\infty$ using $\Ric\geq 0$ and $F(0)<4\pi$.
\end{proof}

\begin{lma}\label{lma:flux}\cite{BFM2024}
For almost every $t>0$,
\begin{equation}\label{zg:flux}
 e^{-t}\int_{\Sigma_t}|\nabla w|^{1/3}
 =\int_{\partial B_g(x_0,\sigma)}|\nabla w|^{1/3}
      =3^{-1/3}\int_{\partial B_g(x_0,\sigma)}|\nabla u|^{1/3}>0.
\end{equation}
Moreover,  $F,G$ extend absolutely continuously to every
$[0,T]$, with their values at $t=0$ equal to the 
integrals over $\partial B_g(x_0,\sigma)$. 
\end{lma}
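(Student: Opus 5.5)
The flux identity \eqref{zg:flux} is the divergence theorem applied to the $p$-harmonic equation for $u$, with $p=4/3$. I will work directly with $u$ on the region between $\partial B_g(x_0,\sigma)$ and a level set. The vector field $|\nabla u|^{-2/3}\nabla u$ is divergence free in $\Omega$. For a.e. $t>0$, the level set $\{w=t\}=\{u=e^{-3t}\}$ has regular part $\Sigma_t$, and the critical set of $u$ on it is $\mathcal H^2$-negligible by the coarea formula and Sard-type results for $p$-harmonic functions. The plan is to integrate $\operatorname{div}(|\nabla u|^{-2/3}\nabla u)=0$ over $\Omega_t\setminus \overline{B_g(x_0,\sigma)}=\{0<w<t\}$. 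This region is relatively compact by Lemma~\ref{lma:gradient}, and its boundary consists of $\partial B_g(x_0,\sigma)$ and $\{w=t\}$. On both pieces the outward normal is $\pm\nabla u/|\nabla u|$, so the flux through each piece is $\int|\nabla u|^{1/3}$. We obtain
\[
\int_{\Sigma_t}|\nabla u|^{1/3}=\int_{\partial B_g(x_0,\sigma)}|\nabla u|^{1/3}.
\]
Since $u$ is only $C^{1,\beta}$ and the field is merely H\"older near critical points, I will justify this weakly. Test the weak equation $\int\langle|\nabla u|^{-2/3}\nabla u,\nabla\eta\rangle=0$ with $\eta=\psi_\epsilon(u)$, where $\psi_\epsilon$ is a Lipschitz cutoff equal to $1$ near $u=1$ and to $0$ for $u\le e^{-3t}$, with the transition on $[e^{-3t},e^{-3t}+\epsilon]$. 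By the coarea formula the averaged flux over the transition band converges to the level-set flux for a.e. $t$ (Lebesgue points). The boundary term at $u=1$ is handled by the smoothness of $u$ near $\partial\Omega$ from Lemma~\ref{lma:p-exist-estimate}(iv).

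Next I convert to $w$. We have $\nabla w=-\frac13 u^{-1}\nabla u$, so $|\nabla u|^{1/3}=3^{1/3}u^{1/3}|\nabla w|^{1/3}=3^{1/3}e^{-t}|\nabla w|^{1/3}$ on $\Sigma_t$. On $\partial B_g(x_0,\sigma)$, where $u=1$ and $t=0$, this gives $|\nabla w|^{1/3}=3^{-1/3}|\nabla u|^{1/3}$. Substituting into the flux identity yields \eqref{zg:flux}. Positivity follows from Lemma~\ref{lma:p-exist-estimate}(iv), since $|\nabla u|>0$ on the boundary.

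For the extension of $F$ and $G$, the derivative formulas of Lemma~\ref{FGlema} hold for a.e. $t$ and give $F,G\in W^{1,1}_{loc}(0,\infty)$. To get absolute continuity on $[0,T]$ it suffices to show that $F'$ and $G'$ are integrable near $t=0$ and that $F,G$ converge to the boundary integrals as $t\to0^+$. By Lemma~\ref{lma:gradient}, $w$ is smooth with $|\nabla w|>0$ on $\{0\le w\le t_0\}$. So for $t\in[0,t_0]$ the level sets $\Sigma_t$ form a smooth foliation by the implicit function theorem: every point is regular, and $H$ and $|\nabla w|$ depend smoothly on $t$ up to $t=0$. Hence $F$ and $G$ are smooth on $[0,t_0]$, with values at $0$ equal to the integrals over $\partial B_g(x_0,\sigma)$. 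Combined with the $W^{1,1}_{loc}$ regularity on $(0,\infty)$, this gives absolute continuity on every $[0,T]$.

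The main obstacle is the weak justification of the flux identity across the critical set of $u$ for a.e. level, specifically passing from the band average to the level-set integral. I would handle it with the coarea formula applied to $|\nabla u|^{4/3}\cdot|\nabla u|^{-1}$. The integrand $|\nabla u|^{1/3}$ is locally bounded, so the level-set flux is $L^1_{loc}$ in the level parameter and the Lebesgue differentiation theorem applies, which I would check carefully.
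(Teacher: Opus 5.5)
Your proposal is correct and follows the paper's proof. The paper obtains the flux identity for $u$ by citing Proposition 2.8 of \cite{BFM2024}, and your argument (the weak formulation tested with $\eta=\psi_\epsilon(u)$, the coarea formula, and the smooth behaviour of $u$ near $\partial B_g(x_0,\sigma)$) is simply the proof of that proposition written out. The rest matches step by step: the conversion $|\nabla u|^{1/3}=3^{1/3}e^{-t}|\nabla w|^{1/3}$ on $\Sigma_t$, positivity from Lemma~\ref{lma:p-exist-estimate}(iv), and gluing the smoothness of $F,G$ on $[0,t_0]$ to the Sobolev regularity from Lemma~\ref{FGlema}. The appeal to Sard-type results is not needed: $|\nabla u|^{1/3}$ vanishes on the critical set, and the coarea formula alone already makes that set $\mathcal H^2$-negligible on a.e.\ level.
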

\begin{proof}
For the potential $u$ constructed,
\[
 |\nabla u|^{-2/3}\nabla u
       =-3^{1/3}e^{-w}|\nabla w|^{-2/3}\nabla w,
\]
with value zero whenever $\nabla w=0$. \cite[Proposition 2.8]{BFM2024} gives, for almost every $\tau\in(0,1)$,
\[
 \int_{\{u=\tau\}}|\nabla u|^{1/3}
 =\int_{\partial B_g(x_0,\sigma)}|\nabla u|^{1/3}.
\]
On $\Sigma_t$, $u=e^{-3t}$ and $|\nabla u|^{1/3}=3^{1/3}e^{-t}|\nabla w|^{1/3}$. This proves the identity. The strict positivity then follows from (iv) of Lemma~\ref{lma:p-exist-estimate}. Note that Lemma \ref{lma:gradient} implies that the level set $\Sigma_t$ of $w$ are regular for $t\in [0,t_0]$. Hence $F$ and $G$ are smooth for $t\in [0,t_0]$, together with $F\in W^{1,1}_{loc}(0,\infty)$ and $G\in W^{2,1}_{loc}(0,\infty)$ from Lemma \ref{FGlema}, $F,G$ extend absolutely continuously to every $[0,T]$. 
\end{proof}

\medskip

The next Proposition give us a control of volume lower bound through sub-level set of potential function.

\begin{prop}\label{prop:sublevels}
There are constants $c,C>0$ such that for all $T>1$,
\begin{equation}\label{zg:volume-sublevel}
 \mathrm{Vol}_g(\Omega_T)\ge c\exp\left[3T-
       C\int_{\Omega_T\setminus B_g(x_0,\sigma)}\mathrm{scal}(g)|\nabla w| \,d\mathrm{vol}_{g}\right]
 \qquad(T>1).
\end{equation}
After increasing the fixed radius $r_0$ if necessary, there are
constants $c,C>0$, independent of $r,T$, such that if $T>1,\ r\ge r_0,\ \Omega_T\subset B_g(x_0,r)$
\begin{equation}\label{zg:volume-ball-comparison}
 \mathrm{Vol}_g(x_0,r)\ge c\exp\left[3T-
       C\int_{B_g(x_0,r)\setminus B_g(x_0,r_0)}\frac{\mathrm{scal}(g)}{\rho} \,d\mathrm{vol}_{g}\right].
\end{equation}
\end{prop}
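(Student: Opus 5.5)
We plan to derive \eqref{zg:volume-sublevel} from a lower bound on the area of the level sets $\Sigma_t$ and the co-area formula. Set $A(t):=\mathcal{H}^2(\Sigma_t)$.

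\textbf{Step 1: co-area reduction.} By co-area,
\[
\mathrm{Vol}_g(\Omega_T)\ge\int_{\Omega_T\setminus B_g(x_0,\sigma)}d\mathrm{vol}_g\ge\int_0^T\int_{\Sigma_t}|\nabla w|^{-1}\,dA\,dt .
\]
By H\"older with exponents $(4,4/3)$, or more simply Jensen, together with the flux identity of Lemma~\ref{lma:flux},
\[
\Phi e^{t}=\int_{\Sigma_t}|\nabla w|^{1/3}\le\Big(\int_{\Sigma_t}|\nabla w|^{-1}\Big)^{1/4}\Big(\int_{\Sigma_t}|\nabla w|^{2/3}\cdot|\nabla w|^{0}\Big)^{\dots}.
\]
The cleanest route is the following. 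Write $\int_{\Sigma_t}|\nabla w|^{1/3}\le A(t)^{2/3}\big(\int_{\Sigma_t}|\nabla w|\big)^{1/3}$, and bound $\int_{\Sigma_t}|\nabla w|$ by $A(t)^{1/2}G(t)^{1/2}\le C A(t)^{1/2}$ using $G\le 4\pi$ from Lemma~\ref{FGlema}. This gives $\Phi e^{t}\le C A(t)^{5/6}$, i.e.\ a growth $A(t)\gtrsim e^{6t/5}$, which is too weak. So instead we aim directly at $\int_{\Sigma_t}|\nabla w|^{-1}\gtrsim e^{3t}$ times a curvature factor, reflecting that $\int_{\Omega_t}|\nabla w|^{4/3}$-type quantities should grow like $e^{3t}$ in the model $p$-capacitary picture.

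\textbf{Step 2: derivative of the volume.} Let $V(t):=\int_{\{0<w<t\}}d\mathrm{vol}_g$, so that $V'(t)=\int_{\Sigma_t}|\nabla w|^{-1}$. The key differential inequality we want is
\[
\frac{d}{dt}\int_{\Sigma_t}|\nabla w|^{-1}\ge\Big(3-C\int_{\Sigma_t}\mathrm{scal}\,|\nabla w|^{0}\Big)\int_{\Sigma_t}|\nabla w|^{-1},
\]
or an integrated version. To get it we differentiate along the flow $\nabla w/|\nabla w|^2$. Since $\operatorname{div}(|\nabla w|^{-2/3}\nabla w)=|\nabla w|^{4/3}$, the mean curvature is expressed as $H=|\nabla w|^{-1}\big(|\nabla w|^{4/3}\cdot|\nabla w|^{2/3}\cdot\dots\big)$ minus a normal-derivative term. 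Concretely, $\Delta w=|\nabla w|^2+\tfrac23|\nabla w|^{-1}\nabla^2w(\nu,\nu)|\nabla w|$, and $H=|\nabla w|^{-1}(\Delta w-\nabla^2w(\nu,\nu))$. We substitute the near-equality $H\approx\tfrac65|\nabla w|$, controlled in $L^2$ by \eqref{zg:square-total}, to obtain a growth rate of exactly $3$, with an error bounded by $\int_{\Sigma_t}(H-\tfrac65|\nabla w|)^2$ and by the Gauss--Bonnet term. On the Gauss--Bonnet side, \eqref{GBF} and the Gauss equation give $\int_{\Sigma_t}\big(\tfrac12\mathrm{scal}_g-\Ric(\nu,\nu)+\tfrac12(H^2-|A|^2)\big)\in4\pi\mathbb{Z}$. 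The smallness $F<4\pi$ then forces the topological term to be $4\pi$ (a sphere) rather than $\le0$, which bounds $\int_{\Sigma_t}H^2$ below by $16\pi-2\int_{\Sigma_t}\mathrm{scal}$. This is where the scalar curvature enters. Integrating $t\in(t_0,T)$ converts $\int\!\int_{\Sigma_t}\mathrm{scal}\,dt$ into $\int\mathrm{scal}|\nabla w|\,d\mathrm{vol}$ by co-area, producing exactly the exponent in \eqref{zg:volume-sublevel}. We expect this step to be the main obstacle: making the exponent exactly $3$ (not $6/5$ or $3-\epsilon$) requires combining the monotone quantity $G'=3(G-F)$ with the Gauss--Bonnet lower bound, so that $G(t)e^{-3t}$ behaves like a sub/supersolution. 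We would first try to show $\frac{d}{dt}\log\big(\int_{\Sigma_t}|\nabla w|^{-1}\big)\ge3-C\int_{\Sigma_t}\mathrm{scal}/|\nabla w|$ only after H\"older against $G$, and accept an error absorbed by the finite integral \eqref{zg:square-total}.

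\textbf{Step 3: passing to balls.} For \eqref{zg:volume-ball-comparison}, use $\Omega_T\subset B_g(x_0,r)$ so that $\mathrm{Vol}_g(x_0,r)\ge\mathrm{Vol}_g(\Omega_T)$. Then split the curvature integral over $B_g(x_0,r_0)$, which is a bounded constant absorbed into $c$ since $|\nabla w|$ is bounded by Lemma~\ref{lma:gradient}, and over the complement, where $|\nabla w|\le C/\rho$ by Lemma~\ref{lma:gradient}. This yields the weight $\mathrm{scal}/\rho$ on $B_g(x_0,r)\setminus B_g(x_0,r_0)$.
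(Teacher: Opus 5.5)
Your Step 3 is exactly how the ball version follows, and the co-area reduction in Step 1 is fine. But Step 2, which carries the whole content of the proposition, states a goal rather than giving an argument, and its concrete claims do not hold up.

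First, $F<4\pi$ does not force $\int_{\Sigma_t}\mathrm{scal}(g_{\Sigma_t})=8\pi$. Since $F\le\int_{\Sigma_t}H^2/4$, smallness of $F$ gives no upper bound on $\int_{\Sigma_t}H^2$. Nothing excludes regular level sets with $\int_{\Sigma_t}\mathrm{scal}(g_{\Sigma_t})\le 0$, or with several components. So \eqref{GBF} only yields the dichotomy ``$\le 0$ or $\ge 8\pi$'', and both alternatives have to be treated. The sphere-case bound $\int_{\Sigma_t}H^2\ge16\pi-2\int_{\Sigma_t}\mathrm{scal}$ is true in that case, but you inferred the topology from smallness in the wrong direction.

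Second, substituting $H\approx\tfrac65|\nabla w|$ does not produce rate $3$. At regular levels the equation gives $\nabla^2w(\nu,\nu)=3|\nabla w|^2-3H|\nabla w|$, hence $\frac{d}{dt}\int_{\Sigma_t}|\nabla w|^{-1}=\int_{\Sigma_t}(4H|\nabla w|^{-2}-3|\nabla w|^{-1})$. With $H=\tfrac65|\nabla w|$ this is rate $\tfrac95$, not $3$. Your target inequality would therefore need a lower bound on $\int_{\Sigma_t}H|\nabla w|^{-2}$, which $F$ and $G$ cannot supply: they only see $\int_{\Sigma_t}H|\nabla w|$ and $\int_{\Sigma_t}|\nabla w|^2$.

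The missing idea is that the extra $\tfrac65$ comes from exponential decay of $G$. You should prove that decay rather than an ODE for $\int_{\Sigma_t}|\nabla w|^{-1}$.

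\textbf{The decay of $G$.} Work with $\log(F-\tfrac12G)$, which is well defined since $0<G\le F$ by the flux identity and Lemma~\ref{FGlema}. From $G'=3(G-F)$ one gets $(F-\tfrac12G)'+\tfrac32(F-\tfrac12G)=F'+3F-\tfrac94G$.

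If $\int_{\Sigma_t}\mathrm{scal}(g_{\Sigma_t})\le0$, combine the Gauss equation, $\mathrm{scal}\ge\Ric(\nu,\nu)$ and the formula for $F'$ to get $F'\le-\tfrac65F-\tfrac95\int_{\Sigma_t}(H-\tfrac65|\nabla w|)^2$. Cauchy--Schwarz gives $(F-G)^2\le G\int_{\Sigma_t}(H-\tfrac65|\nabla w|)^2$. Together these make the right side at most $-\tfrac95(F-\tfrac32G)^2/G\le0$.

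If $\int_{\Sigma_t}\mathrm{scal}(g_{\Sigma_t})\ge8\pi$, the Gauss equation (with $\Ric\ge0$ and $F\le F(0)$) gives $\int_{\Sigma_t}[\mathrm{scal}+\tfrac12(H-\tfrac65|\nabla w|)^2]\ge8\pi-2F(0)>0$. Meanwhile $F'\le0$ and $G\ge0$ bound $\tfrac{d}{dt}\log(F-\tfrac12G)+\tfrac32$ by $3$.

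In both cases $\tfrac{d}{dt}\log(F-\tfrac12G)+\tfrac32\le\tfrac{3}{8\pi-2F(0)}\int_{\Sigma_t}[\mathrm{scal}+\tfrac12(H-\tfrac65|\nabla w|)^2]$. Integrating, with \eqref{zg:square-total} and co-area, yields $G(t)\le C\exp[-\tfrac32t+C\int_{\Omega_t\setminus B_g(x_0,\sigma)}\mathrm{scal}\,|\nabla w|]$.

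\textbf{Passing to volume.} Your Step 1 now goes through with the right H\"older triple: $\int_{\Sigma_t}|\nabla w|^{1/3}\le(\int_{\Sigma_t}|\nabla w|^2)^{4/9}(\int_{\Sigma_t}|\nabla w|^{-1})^{5/9}$. Together with the flux identity this gives $\int_{\Sigma_t}|\nabla w|^{-1}\ge ce^{9t/5}G(t)^{-4/5}$, and $\tfrac95+\tfrac45\cdot\tfrac32=3$. Then integrate over $t\in[T-1,T]$.

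Your attempt stalled at $e^{6t/5}$ because you used only $G\le4\pi$, and because you estimated area rather than $\int_{\Sigma_t}|\nabla w|^{-1}$. The quantity to control is $G(t)e^{3t/2}$, not $G(t)e^{-3t}$.
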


\begin{proof}
By Lemma~\ref{FGlema}, $F-G/2\ge G/2\ge 0$. Thanks to Lemma \ref{lma:flux} and monotonicity from Lemma~\ref{FGlema}, $G(t)>0$ for all $t$ and  thus $F-G/2>0$. 

By Lemma~\ref{lma:flux}, $\log(F-G/2)$ is absolutely continuous.
We first obtain a differential inequality for this logarithm. The Gauss equation and the definition of $F$ give 
\begin{equation}\label{zg:gauss-recalled}
\begin{aligned}
 \int_{\Sigma_t}\left[\mathrm{scal}(g)+\frac12\left(H-\frac65|\nabla w|\right)^2\right]
 &=\int_{\Sigma_t}\mathrm{scal}(g_{\Sigma_t})-2F(t)\\
 &\quad+\int_{\Sigma_t}\bigl[2\Ric(\nu,\nu)+|A^\circ|^2\bigr].
\end{aligned}
\end{equation}

By Cauchy--Schwarz, we have 
\begin{equation}\label{zg:cauchy-square}
 (F-G)^2
 =\left[\frac35\int_{\Sigma_t}|\nabla w|
                 \left(H-\frac65|\nabla w|\right)\right]^2
 \le G\int_{\Sigma_t}\left(H-\frac65|\nabla w|\right)^2.
\end{equation}

\begin{claim}\label{claim:inequal-FG}
\begin{equation}\label{zg:log-ode}
\begin{split}
 \frac{d}{dt}\log\left(F-\frac12G\right)
 +\frac32 \le\frac{3}{8\pi-2F(0)}
 \int_{\Sigma_t}\left[\mathrm{scal}(g)+\frac12\left(H-\frac65|\nabla w|\right)^2\right].
 \end{split}
\end{equation}
\end{claim}
\begin{proof}[Proof of Claim~\ref{claim:inequal-FG}]

By \eqref{GBF}, for almost every $t$ either one of the following holds:
\begin{enumerate}
    \item [(I)]$\displaystyle\int_{\Sigma_t}\mathrm{scal}(g_{\Sigma_t})\leq 0$ or
    \item [(II)] $\displaystyle\int_{\Sigma_t}\mathrm{scal}(g_{\Sigma_t})\geq 8\pi $.
\end{enumerate}

\medskip

In case of $(\mathrm{I})$,
\eqref{zg:gauss-recalled} and $\mathrm{scal}(g)\ge\Ric(\nu,\nu)$ imply
\[
 \int_{\Sigma_t}\bigl[\Ric(\nu,\nu)+|A^\circ|^2\bigr]
 \ge2F+\frac12\int_{\Sigma_t}\left(H-\frac65|\nabla w|\right)^2.
\]

Substituting into the formula of $F'$ in Lemma \ref{FGlema}
gives 
$$F'\le-\frac65F-\frac95\int_{\Sigma_t}(H-\frac65|\nabla w|)^2.$$

Thus $G'=3(G-F)$ and \eqref{zg:cauchy-square} yield
\begin{align*}
 &\quad \left(F-\frac12G\right)'+\frac32\left(F-\frac12G\right)\\
 &=F'+3F-\frac94G\\
 &\le\frac95F-\frac94G-\frac95\frac{(F-G)^2}{G}
 =-\frac95\frac{(F-\frac32G)^2}{G}\le0.
\end{align*}
This proves the inequality in case $(\mathrm{I})$.

\medskip

In case of $(\mathrm{II})$, 
\eqref{zg:gauss-recalled} and $F\le F(0)<4\pi$ give
\begin{equation}\label{eqn:S-H^2}
    \int_{\Sigma_t}\left[\mathrm{scal}(g)+\frac12\left(H-\frac65|\nabla w|\right)^2\right]
 \ge8\pi-2F(0)>0.
\end{equation}

In this case it suffices to use $F'\le0$ and $G\ge0$ to show that 
\[
 \frac{d}{dt}\log\left(F-\frac12G\right)+\frac32
 =\frac{F'+3F-\frac94G}{F-\frac12G}\le3.
\]
Together with \eqref{eqn:S-H^2} finish the proof.
\end{proof}

\medskip

We integrate $\log (F-\frac12 G)$ to conclude 
\begin{equation}
    \begin{split}
     &\quad   \left[ \log\left( F-\frac12 G\right) +\frac32 \right] \Big|^t_0 \\&\leq\frac{3}{8\pi-2F(0)} \int^t_0 
 \int_{\Sigma_s}\left[\mathrm{scal}(g)+\frac12\left(H-\frac65|\nabla w|\right)^2\right]ds\\
 &=:\mathbf{I}+\mathbf{II}.
    \end{split}
\end{equation}

For $\mathbf{I}$, we use coarea to give 
\begin{equation}
    \begin{split}
\int^t_0\left(\int_{\Sigma_s} \mathrm{scal}(g) \right)ds =\int_{\Omega_t\setminus B_g(x_0,\sigma)} \mathrm{scal}(g)\cdot |\nabla w|\,d\mathrm{vol}_g
    \end{split}
\end{equation}
while
\begin{equation}
    \begin{split}
\int^t_0\left[\int_{\Sigma_s} \left(H-\frac65 |\nabla w| \right)^2 \right]ds\leq \frac23 F(0),
    \end{split}
\end{equation}
using Lemma~\ref{FGlema}. Exponentiating and using $G\le2(F-G/2)$ now gives
\begin{equation}\label{zg:G-exponential}
 G(t)\le C_0\exp\left[-\frac32t+
 C_0\int_{\Omega_t\setminus B_g(x_0,\sigma)}\mathrm{scal}(g)|\nabla w|\,d\mathrm{vol}_{g}\right].
\end{equation}

To pass to volume, H\"older's inequality on the regular part of $\Sigma_t$ gives
\[
 \int_{\Sigma_t}|\nabla w|^{1/3}
 \le\left(\int_{\Sigma_t}|\nabla w|^2\right)^{4/9}
     \left(\int_{\Sigma_t}|\nabla w|^{-1}\right)^{5/9}.
\]
The flux formula \eqref{zg:flux} makes the left side
$e^t\int_{\partial B_g(x_0,\sigma)}|\nabla w|^{1/3}$, and
$\int_{\Sigma_t}|\nabla w|^2=(25/9)G(t)$. Rearranging and using
\eqref{zg:G-exponential}, we obtain
\begin{equation}\label{zg:area-inverse-gradient}
\begin{aligned}
 \int_{\Sigma_t}|\nabla w|^{-1}
 &\ge c e^{9t/5}G(t)^{-4/5}\\
 &\ge c\exp\left[3t-
 C\int_{\Omega_t\setminus B_g(x_0,\sigma)}\mathrm{scal}(g)|\nabla w|\,d\mathrm{vol}_{g}\right].
\end{aligned}
\end{equation}

The fixed boundary flux is positive; the exponent is
$9/5+(4/5)(3/2)=3$. If the left side is infinite, the inequality is automatic.
Coarea on $\{|\nabla w|>0\}$ and $\Omega_t\subset\Omega_T$ for $t<T$ give
\begin{align*}
 \mathrm{Vol}_g(\Omega_T)
 &\ge\int_{T-1}^T\left[\int_{\Sigma_t}|\nabla w|^{-1}\right]\,d t\\
 &\ge c\exp\left[-C\int_{\Omega_T\setminus B_g(x_0,\sigma)}
                      \mathrm{scal}(g)|\nabla w|\,d\mathrm{vol}_{g}\right]\int_{T-1}^Te^{3t}\,d t.
\end{align*}

Since $\int_{T-1}^Te^{3t}\,d t=(1-e^{-3})e^{3T}/3$, this proves
\eqref{zg:volume-sublevel}. This argument discards any volume of the critical
set.

Finally, choose $r_0\ge2\sigma$ so that $|\nabla w|\le C/\rho$ for
$\rho\ge r_0$, using Lemma~\ref{lma:gradient}. If $\Omega_T\subset B_g(x_0,r)$,
\begin{eqnarray*}
     \int_{\Omega_T\setminus B_g(x_0,\sigma)}\mathrm{scal}(g)|\nabla w|\,d\mathrm{vol}_{g}
 &\le&\left(\sup_{\{\sigma\le\rho\le r_0\}}|\nabla w|\right)
       \int_{B_g(x_0,r_0)}\mathrm{scal}(g)\,d\mathrm{vol}_{g}\\
&&       +C\int_{B_g(x_0,r)\setminus B_g(x_0,r_0)}\frac {\mathrm{scal}(g)}{\rho}\,d\mathrm{vol}_{g}.
\end{eqnarray*}

The first term is a fixed finite constant by Lemma~\ref{lma:gradient}
and smoothness of $\mathrm{scal}(g)$ on the compact closed ball. Absorbing it into $c$
in \eqref{zg:volume-sublevel} proves \eqref{zg:volume-ball-comparison}.
\end{proof}

\medskip

For $r_0$ from Proposition~\ref{prop:sublevels}, we define 
\begin{equation}\label{zg:m-definition}
 m(r):=\inf_{s\ge r}\frac{\mathrm{Vol}_g(x_0,s)}{s^2},\;\;\forall r\geq r_0.
\end{equation}

By Lemma~\ref{lma:quad-vol-k}, we might assume $r_0$ large so that $k(x_0,r)\leq C_1:=\b+1$ for $r>r_0$ and  
\begin{equation}\label{mmono}
    0<c\le m(r)\le\frac{\mathrm{Vol}_g(x_0,r)}{r^2},
 \qquad m(r_1)\le m(r_2)
\end{equation}
for $r_2\geq r_1\geq r_0$. Moreover by $\mathrm{scal}\geq 0$, for all $s>r>r_0$ 
\begin{equation}\label{eqn:mon-sca}
    \begin{split}
        \int_{B_g(x_0,r)} \mathrm{scal}\,d\mathrm{vol}_g&\leq   \int_{B_g(x_0,s)} \mathrm{scal}\,d\mathrm{vol}_g\\
        &=k(x_0,s)\cdot \frac{\mathrm{Vol}_g(x_0,s)}{s^2}
    \end{split}
\end{equation}

Taking the infimum over $s>r$ shows that for all $r\geq r_0$,
\begin{equation}\label{di:scalar-m}
 \int_{B_g(x_0,r)}\mathrm{scal}(g)\,d\mathrm{vol}_{g}\le C_1m(r).
\end{equation}

Since $\mathrm{AVR}(g)=0$, by \eqref{eqn:mon-sca} and Bishop-Gromov comparison for any $\e>0$, there is $r_\e>r_0$ such that for all $r>r_\e$,
\begin{equation}\label{di:scalar-m-2}
\frac1r \int_{B_g(x_0,r)}\mathrm{scal}(g)\,d\mathrm{vol}_{g}\le \e .
\end{equation}

\medskip

We next show that the error term appeared in Proposition~\ref{prop:sublevels} is negligible if $m(r)\to+\infty$.
\begin{lma}\label{lem:weighted}
Under the  assumptions of Theorem~\ref{thm:vol-growth}, if in addition $m(r)\to\infty$, then
\[
 \frac{1}{\log m(r)}
 \int_{B_g(x_0,r)\setminus B_g(x_0,r_0)}\rho^{-1}\cdot \mathrm{scal}(g)\,d\mathrm{vol}_{g}\longrightarrow0.
\]
\end{lma}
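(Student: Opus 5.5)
We will handle the weighted integral with the layer-cake (coarea in the radial variable) formula. Set $S(s):=\int_{B_g(x_0,s)}\mathrm{scal}(g)\,d\mathrm{vol}_g$, which is nondecreasing in $s$. Then
\begin{equation}
\int_{B_g(x_0,r)\setminus B_g(x_0,r_0)}\frac{\mathrm{scal}(g)}{\rho}\,d\mathrm{vol}_g
=\frac{S(r)}{r}-\frac{S(r_0)}{r_0}+\int_{r_0}^r\frac{S(s)}{s^2}\,ds ,
\end{equation}
as a Stieltjes integration by parts. The boundary term $S(r)/r$ is at most $\e$ for $r>r_\e$ by \eqref{di:scalar-m-2}, so it is bounded. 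The whole matter therefore reduces to showing
\begin{equation}
\int_{r_0}^r\frac{S(s)}{s^2}\,ds=o(\log m(r)).
\end{equation}

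We bound the integrand in two complementary ways. The first bound is $S(s)\le C_1 m(s)$ from \eqref{di:scalar-m}. The second is $S(s)\le \e s$ for $s\ge r_\e$, from \eqref{di:scalar-m-2}. We also use Bishop--Gromov to get $m(s)\le \mathrm{Vol}_g(x_0,s)/s^2\le C s$, so that $\log m(r)\lesssim \log r$. Note, however, that we only know $m(r)\to\infty$, possibly very slowly, so we cannot directly compare with $\log r$. The natural approach is to split $[r_0,r]$ at a scale adapted to $m$.

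For $s\in[r_\e,r]$ we use $S(s)/s^2\le \min\{\e/s,\,C_1m(s)/s^2\}$. The key observation is that on the range where $m(s)\le \e s$, the estimate $C_1m(s)/s^2\le C_1 m(r)/s^2$ integrates to something of size $m(r)/s_*$. We therefore define $s_*(r):=m(r)$ (assuming it lies in $[r_\e,r]$; otherwise we adjust trivially) and split the integral as follows.
\begin{itemize}
\item On $[r_\e,s_*]$ we use $\e/s$, which contributes $\e\log(m(r)/r_\e)$.
\item On $[s_*,r]$ we use $C_1m(s)/s^2\le C_1m(r)/s^2$, using the monotonicity of $m$ in \eqref{mmono}. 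This contributes at most $C_1 m(r)/s_*=C_1$.
\item The piece $[r_0,r_\e]$ is a fixed constant $C_\e$.
\end{itemize}
Altogether,
\begin{equation}
\int_{r_0}^r\frac{S(s)}{s^2}\,ds\le C_\e+C_1+\e\log m(r).
\end{equation}
Dividing by $\log m(r)\to\infty$ and then letting $\e\to0$ gives the lemma. The case $m(r)<r_\e$ is excluded for large $r$ because $m\to\infty$. The case $m(r)>r$ cannot occur up to constants, since $m(r)\le Cr$ by Bishop--Gromov; if the constant exceeds $1$, we take $s_*=\min\{m(r),r\}$ and the estimate is unchanged up to constants.

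The main obstacle is choosing the splitting scale. A naive bound using $S(s)\le C_1m(s)$ alone gives $\int m(s)/s^2$, which is not controlled. Conversely, $S(s)\le\e s$ alone gives $\e\log r$, which may be much larger than $\log m(r)$. The step that makes the argument work is the choice $s_*\approx m(r)$ together with the monotonicity of $m$, so we will be careful to check that monotonicity direction in \eqref{mmono}, namely that $m(s)\le m(r)$ for $s\le r$.
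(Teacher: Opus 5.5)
Your proposal is correct and follows essentially the same route as the paper. The paper also writes $1/\rho=1/r+\int_\rho^r s^{-2}\,ds$, which is your Stieltjes identity. It then bounds $\int_{B_g(x_0,s)}\mathrm{scal}$ by $\min\{C_1m(s),\e s\}$ and splits at a scale proportional to $m(r)$, using that $m$ is nondecreasing. The only difference is that the paper splits at $C_1m(r)/\e$, so the tail contributes $\e$ instead of your constant $C_1$; this is immaterial after dividing by $\log m(r)\to\infty$.
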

\begin{proof}
By assumption, $\log m(r)>0$ for all large $r$. The hypothesis $m(r)\to\infty$ ensures that $\log m(r)>0$ for all
sufficiently large $r$. For $r_0<\rho<r$, the identity
$1/\rho=1/r+\int_\rho^r s^{-2}\,ds$ and Tonelli's theorem give
\begin{equation}\label{zg:tonelli-scalar}
\begin{split}
&\quad  \int_{B_g(x_0,r)\setminus B_g(x_0,r_0)}\frac{\mathrm{scal}(g)}{\rho}\,d\mathrm{vol}_{g}\\
& \le\frac1r\int_{B_g(x_0,r)}\mathrm{scal}(g)\,d\mathrm{vol}_g      +\int_{r_0}^r\frac1{s^2}\left(\int_{B_g(x_0,s)}\mathrm{scal}(g)\,d\mathrm{vol}_{g}\right)ds.
\end{split}
\end{equation}

The first term is bounded by \eqref{di:scalar-m-2}. For the second, we use \eqref{di:scalar-m} instead that 
\begin{equation}
    \begin{split}
       &\quad \int^r_{r_0} \frac1{s^2}\left(\int_{B_g(x_0,s)}\mathrm{scal}(g)\,d\mathrm{vol}_{g}\right)ds\\
       &\leq \int^{r_\e}_{r_0}\frac1{s^2}\left(\int_{B_g(x_0,s)}\mathrm{scal}(g)\,d\mathrm{vol}_{g}\right)ds+\int^r_{r_\e} s^{-2}\cdot \min\{C_1m(s), \e s\}\,ds
    \end{split}
\end{equation}

We compute the second term explicitly that 
\begin{equation}
    \begin{split}
 \int^r_{r_\e} s^{-2}\cdot \min\{C_1m(s), \e s\}\,ds
 &\leq \int_{r_\e}^{A_0 m(r)/\e}\frac{\e}{s}\,ds
       +\int_{C_1 m(r)/\e}^{\infty}\frac{C_1m(s)}{s^2}\,ds\\
 &=\e\log\frac{C_1 m(r)}{\e r_\e}+\e.
    \end{split}
\end{equation}

Result follows from first letting $r\to+\infty$ and followed by $\e\to0$.
\end{proof}

\medskip

We now prove that the total curvature is bounded.
\begin{prop}\label{prop:finite}
Under the assumption of Theorem~\ref{thm:vol-growth}, we have 
\[
 \sup_{r>0}m(r)<\infty,
 \qquad 0<\int_M\mathrm{scal}(g)\,d\mathrm{vol}_{g}<\infty.
\]
\end{prop}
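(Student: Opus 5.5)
We argue by contradiction, supposing $\sup_{r>0} m(r)=+\infty$. Since $m$ is nondecreasing by \eqref{mmono}, this gives $m(r)\to+\infty$, so Lemma~\ref{lem:weighted} applies. The strategy is to compare two estimates on the same ball. One is the exponential lower bound on volume in Proposition~\ref{prop:sublevels}. The other is an upper bound on how large $T$ can be when $\Omega_T\subset B_g(x_0,r)$, which comes from the decay estimate (iii) in Lemma~\ref{lma:p-exist-estimate}.

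\emph{Upper bound on $T$.} Use the definition of $m$: $\mathrm{Vol}_g(x_0,s)\ge m(r)s^2$ for $s\ge r$. Then (iii) gives, for $\rho(x)\ge r$,
\[
u(x)\le C_0\Big(\int_{\rho}^\infty m(r)^{-3/4}s^{-3/2}\,ds\Big)^4\le C\,m(r)^{-3}\rho^{-2}.
\]
Consequently $w=-\frac13\log u\ge \log m(r)+\frac23\log\rho-C$ on $M\setminus B_g(x_0,r)$. Set
\[
T_r:=\log m(r)+\tfrac23\log r-C'.
\]
Then $\Omega_{T_r}\subset B_g(x_0,r)$.

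\emph{Lower bound on volume.} Insert $T=T_r$ into \eqref{zg:volume-ball-comparison} and apply Lemma~\ref{lem:weighted}, which makes the error term $o(\log m(r))$. This yields
\[
\mathrm{Vol}_g(x_0,r)\ge c\,m(r)^{3-o(1)}r^{2}.
\]
On the other hand $\mathrm{Vol}_g(x_0,r)\le \Lambda_r\, r^2$ with $\Lambda_r:=\mathrm{Vol}_g(x_0,r)/r^2$. We also need an upper bound for $\Lambda_r$ in terms of $m(r)$; this comparison between $m(r)$ and $\Lambda_r$ is the main obstacle.

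\emph{Handling the obstacle.} Choose radii $s_k\to\infty$ at which the infimum defining $m$ is nearly attained, for instance $\mathrm{Vol}_g(x_0,s_k)\le 2m(s_k)s_k^2$. Such radii exist because $m(r)\to\infty$ and $m$ is an infimum: there are infinitely many $s$ with $\mathrm{Vol}_g(x_0,s)/s^2\le 2m(s)$, since otherwise the infimum would not be approached. Taking $r=s_k$ in the two bounds gives
\[
2m(s_k)s_k^2\ge c\,m(s_k)^{3-o(1)}s_k^2,
\]
that is, $m(s_k)^{2-o(1)}\le C$. This contradicts $m\to\infty$. The exponent $3$ coming from $\tfrac{9}{5}+\tfrac45\cdot\tfrac32$ strictly exceeds $1$, and it is this margin that produces the contradiction. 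If $\log m$ dominates the $\frac23\log r$ term the inequality only improves, so no case split is needed, but I must check the constants carefully at this point. Hence $\sup_r m(r)<\infty$.

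\emph{Finite total curvature.} From \eqref{di:scalar-m},
\[
\int_{B_g(x_0,r)}\mathrm{scal}\le C_1 m(r)\le C_1\sup m<\infty
\]
for every $r$, so the total scalar curvature is finite. It is strictly positive because $\mathrm{scal}(x_0)>0$ and $\mathrm{scal}\ge0$.
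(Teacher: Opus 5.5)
Your proof is correct and is essentially the paper's argument. You use the same barrier $w\ge \log m+\tfrac23\log\rho-C$ from estimate (iii), the same levels $T\approx\log m(r)+\tfrac23\log r$ with $\Omega_T\subset B_g(x_0,r)$, substitution into \eqref{zg:volume-ball-comparison}, and Lemma~\ref{lem:weighted} to absorb the error, giving $m^{2-o(1)}\le C$ along suitable radii. The only cosmetic difference is the choice of radii: you take $s_k$ with $\mathrm{Vol}_g(x_0,s_k)\le 2m(s_k)s_k^2$, which exist because $m>0$ and $m$ is nondecreasing, whereas the paper takes exact minimizers $r_j$ of $\mathrm{Vol}_g(x_0,r)/r^2$ on $[j,\infty)$, which exist because this ratio tends to infinity.
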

\begin{proof}

By \eqref{di:scalar-m},  it suffices to bound $m$. If $m$ were
unbounded, then \eqref{mmono} would give
\begin{equation}\label{di:m-diverges}
 m(r)\longrightarrow\infty,
 \qquad \frac{\mathrm{Vol}_g(x_0,r)}{r^2}\ge m(r)\longrightarrow\infty.
\end{equation}

We now derive a contradiction
using Proposition~\ref{prop:sublevels}. By Lemma~\ref{lma:p-exist-estimate}
for $\rho(x)\to+\infty$,
\begin{align*}
 u(x)
 &\le C\left(\int_{\rho(x)}^\infty\frac{\,d s}{\mathrm{Vol}_g(x_0,s)^{3/4}}\right)^4\\
 &\le \frac{C}{m(\rho(x))^3}
             \left(\int_{\rho(x)}^\infty s^{-3/2}\,d s\right)^4\\
 &=\frac{16C}{m(\rho(x))^3\rho(x)^2}.
\end{align*}
The second line uses $\mathrm{Vol}_g(x_0,s)\ge m(\rho(x))s^2$ for every
$s\ge\rho(x)$. This implies
\begin{equation}\label{zg:potential-barrier}
 w(x)\ge\frac23\log\rho(x)+\log m(\rho(x))-C_0.
\end{equation}

The continuous function $r\mapsto \mathrm{Vol}_g(x_0,r)/r^2$ tends to infinity, so
it attains a minimum on every closed interval $[j,\infty)$ with
integer $j\ge r_0$. To see this
directly, its values eventually exceed $\mathrm{Vol}_g(x_0,j)/j^2+1$; a minimum on
the remaining compact interval is therefore also a minimum on the
whole tail. Choose a minimizing point $r_j\ge j$. For every
$s\ge r_j$ we then have $\mathrm{Vol}_g(x_0,s)/s^2\ge \mathrm{Vol}_g(x_0,r_j)/r_j^2$, and hence
\begin{equation}\label{zg:minimizing-radii}
 \frac{\mathrm{Vol}_g(x_0,r_j)}{r_j^2}=m(r_j),\qquad
 r_j\longrightarrow\infty,\qquad m(r_j)\longrightarrow\infty.
\end{equation}
Define the associated levels by
\[
 T_j=\frac23\log r_j+\log m(r_j)-C_0-1.
\]
Equations~\eqref{zg:minimizing-radii} give $T_j\to\infty$.
For all large $j$, \eqref{mmono} and
\eqref{zg:potential-barrier} imply
\[
 \rho(x)\ge r_j
 \quad\Longrightarrow\quad
 w(x)\ge\frac23\log r_j+\log m(r_j)-C_0=T_j+1.
\]
Thus $\Omega_{T_j}\subseteq B_{g}(x_0,r_j)$. Since
\[
 e^{3T_j}=e^{-3C_0-3}r_j^2m(r_j)^3,
\]
substitution into \eqref{zg:volume-ball-comparison} gives
\[
 r_j^2m(r_j)=\mathrm{Vol}_g(x_0,r_j)
 \ge c\,r_j^2m(r_j)^3
       \exp\left[-C\int_{B_g(x_0,r_j)\setminus B_g(x_0,r_0)}
                              \frac{\mathrm{scal}(g)}{\rho}\,d\mathrm{vol}_{g}\right].
\]

Rearrangement gives
\[
 2\log m(r_j)
 \le C+C\int_{B_g(x_0,r_j)\setminus B_g(x_0,r_0)}
                              \frac{\mathrm{scal}(g)}{\rho}\,d\mathrm{vol}_{g}.
\]
and hence,
\[
 2\le\frac{C}{\log m(r_j)}+
       \frac{C}{\log m(r_j)}
       \int_{B_g(x_0,r_j)\setminus B_g(x_0,r_0)}\frac {\mathrm{scal}(g)}{\rho}\,d\mathrm{vol}_{g}
       \longrightarrow0,
\]
by Lemma~\ref{lem:weighted}. This contradicts
\eqref{di:m-diverges} and thus $m$ is bounded. 
\end{proof}

Finally we show that the volume can at most growth quadratically. 
\begin{prop}
\label{prop:quadratic-upper}
Under the assumption of Theorem~\ref{thm:vol-growth}, we have 
\begin{equation}
    \limsup_{r\to+\infty}\frac{\mathrm{Vol}_g(x_0,r)}{r^2}<+\infty.
\end{equation}
\end{prop}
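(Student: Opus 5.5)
The plan is to argue by contradiction and reuse the sublevel-set machinery of Proposition~\ref{prop:sublevels}. By Proposition~\ref{prop:finite} we already know $\sup_r m(r)=:M_0<\infty$ and $\int_M \mathrm{scal}\,d\mathrm{vol}_g<\infty$. Finite total curvature together with \eqref{zg:tonelli-scalar} gives
\begin{equation}
\int_{B_g(x_0,r)\setminus B_g(x_0,r_0)} \rho^{-1}\,\mathrm{scal}(g)\,d\mathrm{vol}_g\le C
\end{equation}
uniformly in $r$. Hence the error exponent in \eqref{zg:volume-sublevel} and \eqref{zg:G-exponential} is bounded, and we get the clean estimates
\begin{equation}
G(t)\le C e^{-3t/2},\qquad \int_{\Sigma_t}|\nabla w|^{-1}\ge c\,e^{3t}.
\end{equation}

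Suppose now that $K_j:=\mathrm{Vol}_g(x_0,R_j)/R_j^2\to\infty$ along some $R_j\to\infty$, while $m$ stays bounded. The first step is to sharpen the barrier \eqref{zg:potential-barrier} near $R_j$. For $s\ge R_j$ we have both $\mathrm{Vol}_g(x_0,s)\ge K_jR_j^2$ and $\mathrm{Vol}_g(x_0,s)\ge c s^2$ (Lemma~\ref{lma:quad-vol-k}). Splitting the integral in Lemma~\ref{lma:p-exist-estimate}(iii) at $s=\sqrt{K_j}R_j$ gives
\begin{equation}
\int_{R_j}^\infty \mathrm{Vol}_g(x_0,s)^{-3/4}\,ds\lesssim K_j^{-1/4}R_j^{-1/2},
\end{equation}
hence $u\le CK_j^{-1}R_j^{-2}$ on $\{\rho\ge R_j\}$. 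Therefore $\Omega_{T_j}\subset B_g(x_0,R_j)$ with $e^{3T_j}\simeq K_jR_j^2$.

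Second, I would bring in the information that $m$ is bounded. By \eqref{zg:minimizing-radii} there are radii $r_j'\ge R_j$ with $\mathrm{Vol}_g(x_0,r_j')\le (M_0+1)r_j'^2$. Since $\mathrm{Vol}_g(x_0,r_j')\ge K_jR_j^2$, this forces $r_j'\gtrsim \sqrt{K_j}\,R_j$. On the annulus $R_j\le\rho\le r_j'$ the volume stays between $K_jR_j^2$ and $(M_0+1)r_j'^2$. I would then estimate the volume of the region $\Omega_{T'}\setminus\Omega_{T_j}$, for a level $T'$ chosen so that $\Omega_{T'}\subset B_g(x_0,r_j')$, from below by coarea and the bound $\int_{\Sigma_t}|\nabla w|^{-1}\ge ce^{3t}$. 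From above it is at most $(M_0+1)r_j'^2$. The aim is to show that the level $T'$ reachable inside $B_g(x_0,r_j')$ is strictly larger than the naive value $\tfrac13\log r_j'^2$: the barrier on $[R_j,r_j']$ benefits from the plateau of volume $K_jR_j^2$, gaining an extra $\tfrac13\log K_j$ beyond the pure-$s^2$ barrier. This would give $r_j'^2\gtrsim K_j^{\theta}r_j'^2$ for some $\theta>0$, contradicting $K_j\to\infty$.

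The main obstacle is the second step. The naive bookkeeping is scale-consistent: the single-radius comparison at $R_j$ alone yields $K_jR_j^2\ge cK_jR_j^2$, which is no contradiction. The contradiction must therefore come from exploiting both the large ratio at $R_j$ and the small ratio at the later radius $r_j'$ simultaneously. The key inputs are the upper bound $u\lesssim (\int_\rho^\infty \mathrm{Vol}^{-3/4})^4$ evaluated with the plateau, and the lower bound $\int_{\Sigma_t}|\nabla w|^{-1}\gtrsim e^{3t}$ integrated over $t\in[T_j,T']$ rather than over a unit interval. If this balance does not close, the fallback I would try is to use the gradient bound $|\nabla w|\le C/\rho$ of Lemma~\ref{lma:gradient} together with the flux identity \eqref{zg:flux}. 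That route would give an upper bound on $w$, namely $w\le C\log\rho$, and hence an inclusion $B_g(x_0,R)\subset\Omega_{C\log R}$. Combined with an upper bound on $\mathrm{Vol}_g(\Omega_T)$, which would come from $G(t)\le Ce^{-3t/2}$ via H\"older in the reverse direction, this would yield the quadratic upper bound directly.
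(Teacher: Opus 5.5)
Your first step is correct and is how the paper begins. With $\int_M\mathrm{scal}<\infty$ and $\sup|\nabla w|<\infty$, the error exponent in \eqref{zg:volume-sublevel} is bounded, so $\mathrm{Vol}_g(\Omega_T)\ge ce^{3T}$. Combined with the barrier of Lemma~\ref{lma:p-exist-estimate}(iii), this gives, for every large $r$,
\[
\mathrm{Vol}_g(x_0,r)\,\Phi(r)^4\ge c_2,\qquad \Phi(r):=\int_r^\infty \mathrm{Vol}_g(x_0,s)^{-3/4}\,ds .
\]
Your computation at $R_j$ is this inequality at one scale, and you rightly note it gives no contradiction there.

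The gap is the second step: the mechanism you propose cannot produce a contradiction. The level $T'$ with $\Omega_{T'}\subset B_g(x_0,r_j')$ comes from the barrier on $\{\rho\ge r_j'\}$, which depends only on $\mathrm{Vol}_g(x_0,s)$ for $s\ge r_j'$. There the plateau value $K_jR_j^2\lesssim (M_0+1)r_j'^2$ is already dominated by the quadratic lower bound. So $e^{3T'}\simeq r_j'^2$ with no extra factor of $K_j$, and the coarea comparison $c(e^{3T'}-e^{3T_j})\lesssim(M_0+1)r_j'^2$ is perfectly consistent. In fact, a volume profile that is essentially constant ($\approx K_jR_j^2$) on $[R_j,\sqrt{K_j}R_j]$ and quadratic afterwards satisfies $\mathrm{Vol}\cdot\Phi^4\gtrsim 1$ at every scale $r\ge R_j$. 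So no bookkeeping with these inputs, confined to scales above $R_j$, can close.

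The missing idea is to use the inequality at all scales \emph{below} $R_j$ and read it as a differential inequality. Since $\Phi'(r)=-\mathrm{Vol}_g(x_0,r)^{-3/4}$, it says
\[
\frac{d}{dr}\Phi^{-2}=2\bigl(\mathrm{Vol}_g(x_0,r)\,\Phi(r)^4\bigr)^{-3/4}\le 2c_2^{-3/4}.
\]
Integrating from a fixed $r_2$ gives $\Phi(r)\ge C^{-1}r^{-1/2}$ for all large $r$. At $r=R_j$ this contradicts your own bound $\Phi(R_j)\lesssim K_j^{-1/4}R_j^{-1/2}$. Alternatively, combine it with $\Phi(Ar)\le CA^{-1/2}r^{-1/2}$, which follows from the quadratic lower bound, for $A$ large. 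This gives $\int_r^{Ar}\mathrm{Vol}_g(x_0,s)^{-3/4}\,ds\ge cr^{-1/2}$, hence $\mathrm{Vol}_g(x_0,r)\le Cr^2$. This is the paper's argument.

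Your fallback does not close either, for two reasons. First, the constant in $|\nabla w|\le C/\rho$ (from the Cheng--Yau type gradient estimate) is not sharp. So $w\le C\log\rho+C'$ yields at best $\mathrm{Vol}_g(x_0,R)\lesssim R^{3C}$, even granting $\mathrm{Vol}_g(\Omega_T)\lesssim e^{3T}$. Second, that upper bound is not available. H\"older with $G$ and the flux identity only bound $\int_{\Sigma_t}|\nabla w|^{-1}$ from below; an upper bound would need a lower bound on $|\nabla w|$. Moreover, the critical set of $w$ is not controlled.
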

\begin{proof}

From \eqref{zg:gradient}, we already have
\[
 \sup_{M\setminus\Omega}|\nabla w|<\infty.
\]
Together with the assumed finite scalar integral, this implies
\[
 \int_{M\setminus\Omega}\mathrm{scal}(g)|\nabla w|\,d\mathrm{vol}_{g}
 \le\left(\sup_{M\setminus\Omega}|\nabla w|\right)
         \int_M \mathrm{scal}(g)\,d\mathrm{vol}_{g}<\infty
\]
by Proposition~\ref{prop:finite}.

Substituting this uniform bound into \eqref{zg:volume-sublevel} gives
\begin{equation}\label{av:finite-scalar-sublevel}
 \mathrm{Vol}_g(\Omega_T)\ge c e^{3T},
\end{equation}
for all $T\to+\infty$, where $c>0$.

From Lemma~\ref{lma:p-exist-estimate}, for all large $\rho(x)$ 
\begin{equation}\label{eqn:u-bbbd}
    u(x)\le C_0\left(\int_{\rho(x)}^\infty
                                 \frac{\,d s}{\mathrm{Vol}_g(x_0,s)^{3/4}}\right)^4.
\end{equation}

For a large radius $r$, set
\[
 T(r):=-\frac13\log\left[C_0
                   \left(\int_r^\infty\frac{\,d s}{\mathrm{Vol}_g(x_0,s)^{3/4}}\right)^4
              \right]-1
\]
so that Lemma~\ref{lma:quad-vol-k} implies $T\to\infty$ as
$r\to\infty$. 

Hence if $\rho(x)\geq r$, then $w(x)\geq T(r)+1$ by \eqref{eqn:u-bbbd}. This proves $\Omega_T\subset B_g(x_0,r)$ for large $r$. Applying
\eqref{av:finite-scalar-sublevel} gives
\[
 \mathrm{Vol}_g(x_0,r)\ge ce^{3T}
   =\frac{ce^{-3}}{C_0
                  \left(\displaystyle\int_r^\infty
                                \mathrm{Vol}_g(x_0,s)^{-3/4}\,d s\right)^4}.
\]
Thus there is a fixed $c_2>0$ such that
\begin{equation}\label{av:tail-product}
 \mathrm{Vol}_g(x_0,r)\cdot \left(\int_r^\infty\frac{\,d s}{\mathrm{Vol}_g(x_0,s)^{3/4}}\right)^4
 \ge c_2,
\end{equation}
for all $r$ large.

The fundamental theorem of calculus therefore gives
\begin{equation}
    \begin{split}
         \frac{d}{dr}
 \left(\int_r^\infty\frac{\,d s}{\mathrm{Vol}_g(x_0,s)^{3/4}}\right)^{-2}
 &=2\left[\mathrm{Vol}_g(x_0,r)
       \left(\int_r^\infty\frac{\,d s}{\mathrm{Vol}_g(x_0,s)^{3/4}}\right)^4
       \right]^{-3/4}\\
& \le2c_2^{-3/4}.
    \end{split}
\end{equation}

Choose a fixed $r_2\ge r_0$ so that \eqref{av:tail-product} holds
for $r\ge r_2$. Integration yields
\begin{equation}
    \begin{split}
    \left(\int_r^\infty\frac{\,d s}{\mathrm{Vol}_g(x_0,s)^{3/4}}\right)^{-2}
 &\le\left(\int_{r_2}^\infty\frac{\,d s}{\mathrm{Vol}_g(x_0,s)^{3/4}}\right)^{-2}
       +2c_2^{-3/4}(r-r_2)\\
 &\le\left[
       \frac1{r_2}\left(\int_{r_2}^\infty
                              \frac{\,d s}{\mathrm{Vol}_g(x_0,s)^{3/4}}\right)^{-2}
       +2c_2^{-3/4}\right]r
    \end{split}
\end{equation}
for $r\geq r_2$.
\medskip

That said for $r$ sufficiently large, we have 
\begin{equation}\label{av:tail-two-sided}
 \int_r^\infty\frac{\,d s}{\mathrm{Vol}_g(x_0,s)^{3/4}}
 \geq C^{-1}r^{-1/2}.
\end{equation}

By restricting $s\in [r,2r]$, this gives us the  desired volume upper bound of $\mathrm{Vol}_g(x_0,r)$.
\end{proof}

Now we have all ingredients for Theorem~\ref{thm:vol-growth}.
\begin{proof}[Proof of Theorem~\ref{thm:vol-growth}]

The volume lower bound follows from Lemma~\ref{lma:quad-vol-k}, the finiteness of total scalar curvature follows from Proposition~\ref{prop:finite}, while the quadratic volume upper bound follows from Proposition~\ref{prop:quadratic-upper}. It remains to show that the total scalar curvature is bounded by multiple of $\b:=\limsup_{r\to+\infty} k(x_0,r)$. It follows from refining \eqref{eqn:mon-sca}. For any $\e>0$, there is $r_\e>0$ such that for all $r_\e>r$, 
\begin{equation}
\begin{split}
 \int_{B_g(x_0,r)}\mathrm{scal}\,d\mathrm{vol}_g
 &\leq \int_{B_g(x_0,r_\e)}    \mathrm{scal}\,d\mathrm{vol}_g\\
 &\leq k(x_0,r_\e)\cdot \frac{\mathrm{Vol}_g(x_0,r_\e)}{r_\e^2}\leq (\b+\e)\cdot \Lambda.
\end{split}
\end{equation}
Result follows by letting $\e\to0$.

\end{proof}

\end{document}